\tikzset{ext/.style={circle, draw,inner sep=1pt},int/.style={circle,draw,fill,inner sep=1pt},nil/.style={inner sep=1pt}}
\definecolor{BleuTresFonce}{rgb}{0.215, 0.215, 0.36}
\definecolor{EgyptianBlue}{rgb}{0.06, 0.2, 0.65}
\definecolor{darkspringgreen}{rgb}{0.09, 0.45, 0.27}
\definecolor{dartmouthgreen}{rgb}{0.05, 0.5, 0.06}
\numberwithin{equation}{section}
\newtheorem*{conjecture*}{Conjecture}
\newtheorem{conjecture}[equation]{Conjecture}
\newtheorem{theorem}[equation]{Theorem}
\newtheorem*{theorem*}{Theorem}
\newtheorem{definition}[equation]{Definition}
\newtheorem{lemma}[equation]{Lemma}
\newtheorem{proposition}[equation]{Proposition}
\newtheorem{prop-def}[equation]{Proposition-Definition}
\newtheorem{corollary}[equation]{Corollary}
\newtheorem{remark}[equation]{Remark}
\theoremstyle{definition}
\newtheorem{example}[equation]{Example}
\newtheorem*{notation*}{Notations}
\newtheorem*{example*}{Example}
\newtheorem*{question*}{Question}
\newtheorem*{claim*}{Claim}
\newcommand\cred[1]{{\color{red} #1}}
\newcommand\cbl[1]{{\color{blue} #1}}
\newcommand{\Gr}{\operatorname{\Gamma}}
\newcommand{\parti}{\operatorname{\Pi_{\mathsf{gr}}}}
\newcommand{\Path}{\operatorname{\mathsf{P}}}
\newcommand{\K}{\operatorname{\mathsf{K}}}
\newcommand{\Cyc}{\operatorname{\mathsf{C}}}
\newcommand{\St}{\operatorname{\mathsf{St}}}
\DeclareMathOperator{\Ver}{Vert}
\DeclareMathOperator{\Edge}{Edge}
\DeclareMathOperator{\Leav}{Leav}
\DeclareMathOperator{\In}{In}
\DeclareMathOperator{\Id}{Id}
\newcommand{\Aut}{\operatorname{\mathsf{Aut}}}
\newcommand{\Top}{\operatorname{\mathsf{Top}}}
\newcommand{\Real}{\operatorname{\mathbb{R}}}
\newcommand{\Cmplx}{\operatorname{\mathbb{C}}}
\DeclareMathOperator{\Aff}{Aff}
\DeclareMathOperator{\Dil}{Dil}
\DeclareMathOperator{\Bl}{Bl}
\DeclareMathOperator{\PGL}{PGL}
\DeclareMathOperator{\SO}{SO}
\DeclareMathOperator{\Spec}{Spec}
\newcommand{\Fr}{\operatorname{\mathrm{Fr}}}
\newcommand{\M}{\operatorname{\mathcal{M}}}
\newcommand{\Ta}{\operatorname{\mathcal{T}}}
\newcommand{\bC}
{\operatorname{\overline{\mathcal{C}}}}
\newcommand{\bM}{\operatorname{\overline{\mathcal{M}}}}
\newcommand{\beM}{\overline{\EuScript{M}}}
\newcommand{\eM}{\EuScript{M}}
\newcommand{\frM}{\operatorname{\widetilde{\mathcal{M}}^{\mathrm{fr}}}}
\newcommand{\fr}{\operatorname{\mathrm{fr}}}
\newcommand{\frakFM}{\operatorname{\mathfrak{FM}}}
\newcommand{\ufrM}{\operatorname{\widetilde{\mathcal{M}}}}
\newcommand{\Pro}{\operatorname{\mathbb{P}}}
\newcommand{\Sph}{\operatorname{\mathbb{S}}}
\newcommand{\X}{\operatorname{\mathsf{X}}}
\newcommand{\KN}{\operatorname{\mathsf{KN}}}
\DeclareMathOperator{\Conf}{Conf}
\DeclareMathOperator{\NConf}{NConf}
\newcommand{\Disc}{\operatorname{\mathbb{D}}}
\newcommand{\fD}{\operatorname{\mathrm{f}\mathcal{D}}}
\newcommand{\D}{\operatorname{\mathcal{D}}}
\newcommand{\FM}{\operatorname{\mathsf{FM}}}
\newcommand{\fFM}{\operatorname{\mathrm{f}\mathsf{FM}}}
\newcommand{\PB}{\operatorname{\mathsf{PB}}}
\newcommand{\La}{\operatorname{\mathcal{L}}}
\newcommand{\G}{\operatorname{\mathcal{G}}}
\newcommand{\N}{\operatorname{\mathcal{N}}}
\newcommand{\B}{\operatorname{\mathcal{B}}}
\newcommand{\A}{\operatorname{\mathcal{A}}}
\newcommand{\Orb}{\operatorname{\mathcal{O}}}
\newcommand{\F}{\operatorname{\mathcal{F}}}
\newcommand{\Q}{\operatorname{\mathcal{Q}}}
\newcommand{\E}{\operatorname{\mathcal{E}}}
\newcommand{\R}{\operatorname{\mathcal{R}}}
\newcommand{\Ho}{\operatorname{\mathcal{H}}}
\newcommand{\T}{\operatorname{\mathbb{T}}}
\newcommand{\Pop}{\operatorname{\mathcal{P}}}
\newcommand{\C}{\operatorname{\mathcal{C}}}
\newcommand{\cokoszul}{\operatorname{\text{!`}}}
\newcommand{\Com}{\operatorname{\mathsf{gcCom}}}
\newcommand{\Susp}{\operatorname{\mathcal{S}}}
\newcommand{\Lie}{\operatorname{\mathsf{gcLie}}}
\newcommand{\Ass}{\operatorname{\mathsf{gcAss}}}
\newcommand{\OS}{\operatorname{\mathsf{OS}}}
\newcommand{\Gerst}{\operatorname{\mathsf{gcGerst}}}
\newcommand{\BV}{\operatorname{\mathsf{gcBV}}}
\newcommand{\Tree}{\operatorname{\mathsf{Tree}}}
\newcommand{\AD}{\operatorname{\mathsf{AD}}}
\newcommand{\Graphs}{\operatorname{\mathsf{Graphs}}}
\title{Graphical configuration spaces, Contractads and Formality}
\author[A.\,Khoroshkin]{Anton Khoroshkin}
\address{Anton Khoroshkin: \newline
Department of Mathematics, University of Haifa, Mount Carmel, 3103301, Haifa, Israel
}
\email{khoroshkin@gmail.com}
\author[D.\,Lyskov]{Denis Lyskov}
\address{Denis Lyskov: \newline
Université Paris Cité, IMJ-PRG, 75013, Paris,
France
\newline
Higher School of Economics, Faculty of Mathematics, 119048, Moscow, Russia}
\email{ddlyskov@gmail.com}
\date{}
\begin{document}
\begin{abstract}
Given a finite simple connected graph $\Gamma$, the \emph{graphical configuration space} $\mathrm{Conf}_{\Gamma}(X)$ is the space of collections of points in $X$ indexed by the vertices of $\Gamma$, where points corresponding to adjacent vertices must be distinct. 
When $X=\mathbb{R}^d$
and the points are replaced by small disks, the resulting spaces for all possible graphs fit together into an algebraic structure that extends the little disks operad, called \emph{the little disks contractad} $\mathcal{D}_d$.

In this paper, we investigate the homotopical and algebraic properties of the little disks contractad \(\mathcal{D}_d\). We construct and study Fulton--MacPherson compactifications of graphical configuration spaces, which provide a convenient model for \(\mathcal{D}_d\) within the class of compact manifolds with boundary. Using these and wonderful compactifications, we prove that \(\mathcal{D}_d\) is formal in the category of (Hopf) contractads for \(d=1\), \(d=2\), and for chordal graphs for any \(d\). We also identify the first obstructions to coformality in the case of cyclic graphs. In addition, we give a combinatorial description of the cell structure of \(\mathcal{D}_2\) and present applications to the study of graphical configuration spaces \(\mathrm{Conf}_{\Gamma}(X)\) using the language of twisted algebras.
\end{abstract}
\maketitle
\setcounter{section}{-1}
\tableofcontents

\section{Introduction}

\renewcommand{\theequation}{\Alph{equation}}

\subsection{Motivation: from operads to contractads}

The little disks operad $\mathcal{E}_d$ is one of the classical bridges between topology and algebra. 
Its space of $n$-ary operations consists of ``rectilinear'' embeddings of $n$ disjoint unit disks into a single unit disk.
Originally introduced to describe iterated loop spaces (\cite{boardman1968homotopy}, \cite{may2006geometry}, see also~\cite{segal1973configuration}), it later became central in many areas of mathematics. For instance, algebraic models of the little disks operad $\mathcal{E}_2$ and its formality play a key role in Tamarkin's proof of Kontsevich’s formality theorem for Poisson manifolds (\cite{tamarkin1998another}). 
Since then, numerous proofs of the formality of the little disks operad $\mathcal{E}_d$ have been established (\cite{kontsevich1999operads}, \cite{lambrechts2014formality}, \cite{mcclure2003multivariable}, \cite{vsevera2011equivalence}, \cite{fresse2017homotopy}, \cite{vaintrob2021formality}), along with various compatible cell structures. 
The little disks operad has also been instrumental in the geometry of configuration spaces and embedding calculus (see, e.g.,~\cite{campos2019configuration}, \cite{idrissi2019lambrechts}, \cite{turchin2018relative}). 

The notion of a \emph{contractad} was introduced in~\cite{lyskov2023contractads}. 
The term \emph{contractad} combines \emph{``contraction''} and \emph{``operad''}, reflecting the fact that it generalizes operads by replacing composition with graph contractions. 
Here, the indexing set of \emph{operations} is the collection of finite simple connected graphs, while \emph{compositions} are determined by contracting connected subgraphs (see \S\ref{sec:sub:contractads}).
This framework naturally generalizes the little disks operad by relaxing the intersection condition for subdisks.
Specifically, $\D_d(\Gamma)$ denotes the space of embeddings of small disks indexed by the vertices of a connected graph $\Gamma$, with the requirement that disks corresponding to adjacent vertices do not intersect.  
The corresponding generalized configuration space $\Conf_{\Gamma}(X)$ consists of points in $X$ indexed by $V_{\Gamma}$, with the condition that points corresponding to adjacent vertices are distinct.
We call the generalized configuration space $\Conf_{\Gamma}(X)$ \emph{graphical}, as it assembles into a graphical collection.
These natural objects have been extensively studied and appear in the literature under various names:
graph or graphic~\cite{greene1983interpretation, baranovsky2012graph, orlik1980combinatorics, eastwood2007euler},
partial~\cite{berceanu2017geometry}, and chromatic configuration spaces~\cite{zakharov2022rational, kallel2024configuration}.

In this paper, we extend several classical results from the little disks operad $\mathcal{E}_d$ to the little disks contractad $\D_d$.
In particular, we construct Fulton–MacPherson compactifications for all $d$, and in dimensions $d=1$ and $d=2$ we prove the formality of $\D_d$ and describe its compatible cell structure. 
We also analyze the case of long cycles, showing the existence of nontrivial Massey products, which illustrate the intrinsic complexity of the little disks contractad.
We emphasize that the formality problem for higher dimensions ($d>2$), as well as the description of the rational homotopy type of $\mathsf{Conf}_{\Gamma}(\mathbb{R}^d)$ for general $\Gamma$ and $d \geq 2$, remain open questions.

\subsection{Outline of the main results}

Our first main result builds on ideas from~\cite{petersen2016poincare}, showing that the language of twisted Lie algebras is well suited for studying the cohomology of configuration spaces on reasonably "good" topological spaces.
We extend the notion of a twisted algebra over an operad to the setting of contractads and establish the following

\begin{theorem}{\rm (Theorem~\ref{thm::cohomology_twisted_lie})} 
Let $X$ be a paracompact and locally compact Hausdorff space and let $\A^c_X$ be a rational cdga model for the compactly supported cochains on $X$. Then there is an isomorphism of twisted $\Com$-coalgebras
\[
H_{CE}^{\bullet}(\underline{\A^c_X}\underset{\mathrm{H}}{\otimes} \mathsf{S}\Lie)\cong H_c^{\bullet}(\Conf(X);\mathbb{Q}),
\] 
where $\Lie$ is the Lie contractad considered as a free twisted $\Lie$-algebra and $\mathsf{S}\Lie$ is its twisted suspension $\mathsf{S}\Lie(\Gr)\cong \mathrm{Res}^{\Sigma_{V_{\Gr}}}_{\Aut(\Gr)} \mathrm{sgn} \otimes \Lie(\Gr)[-|V_{\Gr}|]$. 
\end{theorem}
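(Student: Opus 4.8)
The plan is to adapt the strategy of~\cite{petersen2016poincare} for ordinary configuration spaces to the graphical (contractad) setting, replacing the partition lattice $\Pi_n$ by the poset $\parti(\Gr)$ of graph-admissible partitions (those whose blocks induce connected subgraphs of $\Gr$) and the Lie operad by the Lie contractad. First I would realize $\Conf_{\Gr}(X)$ as the complement, inside $X^{V_{\Gr}}$, of the graphical diagonal arrangement $\bigcup_{e}\Delta_e$, the union running over edges $e$ of $\Gr$ with $\Delta_e = \{x_u = x_v\}$ for the endpoints of $e$. The space $X^{V_{\Gr}}$ is then stratified by $\parti(\Gr)$: to a partition $\pi$ one associates the diagonal stratum $X_\pi \cong X^{\pi}$ on which coordinates inside each block coincide, the open dense stratum being $\Conf_{\Gr}(X)$ itself (the partition into singletons, $\pi = \hat 0$). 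The crucial point is that the intersection poset of the arrangement is exactly $\parti(\Gr)$, because intersecting diagonals along a set of edges yields precisely the partition into connected components of the corresponding subgraph.

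Next I would run the stratification (Leray-type) spectral sequence for compactly supported cohomology in the form used by Petersen. Its $E_1$-page is indexed by $\parti(\Gr)$, with the term at $\pi$ being $H_c^{\bullet}(X_\pi)$ tensored with the reduced cohomology $\widetilde H^{\bullet}$ of the order complex of the open interval $(\hat 0,\pi)$, which records how $\Conf_{\Gr}(X)$ degenerates onto that stratum. Two identifications then feed in. On the one hand, since $\A^c_X$ computes $H_c^{\bullet}(X;\mathbb{Q})$ and each $X_\pi$ is a product of copies of $X$ indexed by the blocks of $\pi$, the factor $H_c^{\bullet}(X_\pi)$ becomes $(\A^c_X)^{\otimes \pi}$, i.e.\ one tensor factor per block. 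On the other hand, the local factor $\widetilde H^{\bullet}(\hat 0,\pi)$ is governed by the Koszulness of the commutative contractad $\Com$ and the identification of its Koszul dual with the Lie contractad: the interval cohomology is $\prod_{B\in\pi}\mathsf{S}\Lie(\Gr|_B)$, the sign-and-shift twist of $\Lie$, where the shift $[-|V_{\Gr}|]$ and the sign representation arise from the length and orientation of maximal chains in $\parti(\Gr)$ (equivalently, the antisymmetrization in the bar differential). This is exactly the reason $\mathsf{S}\Lie$, rather than $\Lie$, appears.

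Summing over $\pi$ with the poset differential, one recognizes the resulting complex as the Chevalley--Eilenberg complex of the twisted Lie algebra $\underline{\A^c_X}\underset{\mathrm{H}}{\otimes}\mathsf{S}\Lie$: the Hadamard product $\underset{\mathrm{H}}{\otimes}$ couples the ``one copy of $\A^c_X$ per vertex'' functor with the Lie-contractad local data, and the poset differential is precisely the CE differential. To upgrade the vector-space identification to an isomorphism of twisted $\Com$-coalgebras, I would match the two coalgebra structures: on the geometric side the comultiplication is induced by the external products coming from inclusions of sub-configuration spaces, and on the algebraic side by the cocommutative comultiplication of the CE complex, and one checks these agree under the contractad composition (disjoint union and graph contraction). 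Finally, to pass from $E_1$ to the abutment I would invoke a degeneration argument over $\mathbb{Q}$, using that $\A^c_X$ is a strict cdga model so that the remaining differentials are forced by the algebra structure and the spectral sequence collapses.

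I expect the main obstacle to be twofold. The sharper difficulty is the degeneration: one must identify the $E_1$-page with the CE complex \emph{as an algebra} and not merely additively, so that the higher differentials vanish. This is where the cdga hypothesis is essential and where an argument in the spirit of rational formality of configuration spaces, together with purity or weight considerations for non-compact or singular $X$, is needed. The second, more bookkeeping-heavy obstacle is the global coherence of the local computation: one must verify that the Koszul-dual identification of $\widetilde H^{\bullet}(\hat 0,\pi)$ with $\mathsf{S}\Lie$ is natural in $\Gr$ and simultaneously compatible with the poset structure of $\parti(\Gr)$ and with the coalgebra comultiplication, so that the partition-indexed sum assembles the correct CE differential and coproduct rather than one twisted by an uncontrolled sign.
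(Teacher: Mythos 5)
Your skeleton --- stratifying $X^{V_{\Gr}}$ by the graphic partition poset $\parti(\Gr)$, extracting $\mathsf{S}\Lie$ from the interval cohomology via the Koszul duality between $\Com$ and $\Lie$, and assembling the result into a Chevalley--Eilenberg complex --- is the same as the paper's. But the key technical step is organized differently, and the way you propose it has a genuine gap. The paper (following Petersen) never forms a spectral sequence whose $E_1$-page involves $H_c^{\bullet}(X_\pi)$: it works entirely at the cochain level, using the direct-sum decomposition of $j_!j^*\F^{\boxtimes V_{\Gr}}$ over \emph{chains} in $\parti(\Gr)$ to produce a double complex $\mathsf{CF}(\Gr,\A)$ built from the twisted dg algebra $\Real\Gamma_c^{\otimes}(X;\mathbb{Q})$ of derived compactly supported sections, proves a chain-level quasi-isomorphism $C^{\bullet}_c(\Conf_{\Gr}(X))\simeq C_{\mathrm{CE}}^{\bullet}(\mathsf{S}\Real\Gamma_c^{\otimes}(X,\mathbb{Q})\underset{\mathrm{H}}{\otimes}\Lie)$, and only then replaces $\Real\Gamma_c^{\otimes}(X;\mathbb{Q})$ by the constant twisted algebra $\underline{\A^c_X}$. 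Because the identification happens before passing to cohomology, no degeneration argument is ever needed.

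Your version takes cohomology stratum by stratum first, so the term at $\pi$ is $H_c^{\bullet}(X)^{\otimes\pi}$, not $(\A^c_X)^{\otimes\pi}$; the internal differential of the cdga model is invisible on this $E_1$-page, and the higher differentials of your spectral sequence are exactly where the Massey products of $\A^c_X$ live. The collapse you invoke would identify the abutment with $H_{\mathrm{CE}}^{\bullet}(\underline{H_c^{\bullet}(X)}\underset{\mathrm{H}}{\otimes}\mathsf{S}\Lie)$, which agrees with $H_{\mathrm{CE}}^{\bullet}(\underline{\A^c_X}\underset{\mathrm{H}}{\otimes}\mathsf{S}\Lie)$ only when $X$ is formal for compactly supported cochains. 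The theorem assumes nothing of the sort: $X$ is merely paracompact, locally compact and Hausdorff, and the appeal to ``formality of configuration spaces together with purity or weight considerations'' has no content in that generality. To repair the argument you should abandon the $E_1$/degeneration formulation and instead carry the full cdga $\A^c_X$ (or the sheaf-level resolution $\Real\Gamma_c^{\otimes}$) through the chain-indexed splitting of $j_!j^*$, as in the paper's Theorem on sheaf coefficients, so that the identification with the Chevalley--Eilenberg complex is a quasi-isomorphism of complexes rather than an isomorphism of $E_1$-pages.
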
 As an immediate consequence of this Theorem, we have that, for a compact Hausdorff space $X$, its rational compactly supported cohomology of graphical configuration spaces $H_c^{\bullet}(\Conf_{\Gr}(X);\mathbb{Q})$ are rational homotopy invariants.

Next, we generalize the Fulton–MacPherson compactification to graphical configuration spaces and provide a model of the (framed) little disks contractad $\D_d$ (respectively, $\fD_d$) in the category of compact topological spaces with corners (Theorems~\ref{thm::FMn_is_En} and~\ref{thm::FM:M}, Proposition~\ref{prop::parallisable}) .

Lower dimensions $d=1$ and $d=2$ exhibit special features.  
For $d=1$, we identify the spaces of operations with poset associahedra introduced by Galashin~\cite{galashin2021p}:
\begin{theorem}{\rm (Theorem~\ref{thm::D1=As1})}
For a graph $\Gr$, there is a bijection between connected components of $\FM_1(\Gr)$ and the set of acyclic orientations of the graph $\Gr$. Moreover, the connected component indexed by acyclic orientation $\alpha$ is isomorphic to the corresponding Galashin's poset polytope  $\mathcal{K}_{\alpha}(\Gr)$ and the face poset decompositions of these polytopes are compatible with the contractad structure.
\end{theorem}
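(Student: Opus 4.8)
The plan is to prove the two assertions separately: first the bijection on connected components, then the identification of each component with Galashin's polytope together with the compatibility of the stratifications.

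For the first part, I would use the tautological map sending a configuration to an orientation. A point of $\Conf_{\Gr}(\Real)$ assigns to every vertex $v$ a real number $x_v$ with $x_u \neq x_v$ for each edge $\{u,v\}$; orienting each edge from the smaller to the larger coordinate produces an orientation $\alpha$ of $\Gr$, and $\alpha$ is automatically acyclic, since a directed cycle would force $x_{v_1} < \dots < x_{v_k} < x_{v_1}$. This orientation is locally constant, so it descends to a map from $\pi_0\big(\Conf_{\Gr}(\Real)\big)$ to the set of acyclic orientations. To see it is a bijection, fix an acyclic orientation $\alpha$ and consider the chamber $C_\alpha = \{\,x \in \Real^{V_{\Gr}} : x_u < x_v \text{ for every oriented edge } u \to v\,\}$. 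Each $C_\alpha$ is an intersection of open half-spaces, hence an open convex cone invariant under the diagonal action of $\Aff^+(\Real) = \{x \mapsto ax+b : a>0\}$; it is nonempty precisely because an acyclic orientation admits a topological sort realizing it, and convexity gives connectedness, so $C_\alpha$ is a single component. The chambers visibly partition $\Conf_{\Gr}(\Real)$. Since the interior of $\FM_1(\Gr)$ is the dense subset $\Conf_{\Gr}(\Real)/\Aff^+$ and the compactification closes up each component to a connected space without merging components, passing to $\FM_1$ does not change $\pi_0$, which yields the asserted bijection.

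For the second part I would work one chamber at a time. After quotienting by $\Aff^+(\Real)$, the interior of the $\alpha$-component of $\FM_1(\Gr)$ is the open cell $C_\alpha / \Aff^+$ of dimension $|V_{\Gr}| - 2$, matching $\dim \mathcal{K}_\alpha(\Gr)$. Let $P_\alpha$ be the poset on $V_{\Gr}$ given by the transitive closure of $\alpha$; by definition $\mathcal{K}_\alpha(\Gr) = \mathscr{A}(P_\alpha)$ is Galashin's poset associahedron, whose faces are indexed by \emph{tubings} of $P_\alpha$, i.e.\ nested families of \emph{tubes} (connected, order-convex proper subsets with at least two elements). The key geometric observation is that the boundary strata of $\FM_1(\Gr)$ inside this chamber are classified by exactly the same data: a boundary point records clusters of points colliding at a common scale, and a subset $S \subseteq V_{\Gr}$ can form such a cluster only if it is order-convex for $P_\alpha$, for if $a < b < c$ with $a,c \in S$ and $b \notin S$, then along the oriented paths $a \to \dots \to b \to \dots \to c$ the coordinate $x_b$ is trapped strictly between $x_a$ and $x_c$ and must collide whenever they do, forcing $b \in S$; moreover a codimension-one cluster must be $P_\alpha$-connected, since a cluster split by a partition with no relations across it would degenerate as two independent, hence higher-codimension, collisions. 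The recursive ``collision-within-collision'' bookkeeping of the compactification then matches the nesting of tubes, giving an isomorphism of face posets between the $\alpha$-component of $\FM_1(\Gr)$ and $\mathscr{A}(P_\alpha)$.

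Finally I would upgrade this combinatorial match to an isomorphism of polytopes (equivalently, of compact manifolds with corners) and verify contractad compatibility simultaneously, by induction on $|V_{\Gr}|$. A codimension-one facet associated with a tube $S$ is, on the compactification side, the image of the contractad composition $\FM_1(\Gr/S) \times \FM_1(\Gr|_S) \hookrightarrow \FM_1(\Gr)$, where $\Gr/S$ contracts the connected subgraph $S$ and $\Gr|_S$ is the induced subgraph; on Galashin's side the corresponding facet is the product $\mathscr{A}(P_\alpha/S) \times \mathscr{A}(P_\alpha|_S)$ coming from the substitution structure of poset associahedra. Since both sides carry the same recursive product decomposition of facets and both are contractible, the inductive hypothesis furnishes compatible isomorphisms on each facet that glue to a global isomorphism, and this very compatibility is precisely the statement that the face-poset decompositions respect the contractad composition. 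I expect the main obstacle to be this last step: one must verify that the FM chamber is genuinely a convex polytope (a manifold with corners with the correct local product structure), not merely a cell with the right face poset, and that Galashin's substitution maps agree on the nose with the contractad insertions. Concretely this amounts to pinning down explicit $\Aff^+$-invariant coordinates on $C_\alpha$ in which the collision scales become boundary-defining functions matching Galashin's facet normals, so that the recursion on both sides is identified canonically rather than merely combinatorially.
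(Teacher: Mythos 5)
Your proposal is correct and follows essentially the same route as the paper: components are identified with chambers $C_\alpha$ indexed by acyclic orientations, boundary strata are matched with nested families of pipes (Galashin's tubes for $P_\alpha$), and the isomorphism with $\mathcal{K}_\alpha(\Gr)$ is deduced from the agreement of face posets. The only cosmetic difference is that the paper obtains the strata--face correspondence directly from the already-established rooted-tree stratification of $\FM_1$ together with \cite[Prop.~3.11]{galashin2021p}, rather than re-deriving the collision analysis and running an induction over facets as you propose.
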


In dimension $d=2$, we describe in detail the little disks contractad $\D_2$ for a special family of graphs. In particular, whenever the graph $\Gr$ is chordal, the space $\Conf_{\Gr}(\mathbb{C})$ is a $K(\pi,1)$-space whose fundamental group is a generalization of the pure braid group (see~\cite{cohen2021graphic} and \S\ref{sec:sub:chordal}). 
On the other hand, the rational homotopy type of $\Conf_{\Gr}(\mathbb{C})$ might be extremely complicated even for simplest nonchordal graphs such as cycles, as we show in the following
\begin{theorem}
{\rm (Theorem~\ref{thm::cycl::homotopy})}
For a cycle graph $\Cyc_n$ with $n \geq 5$, the Lie algebra of rational homotopy groups of the configuration space $\Conf_{\Cyc_n}(\mathbb{C})$ defined via Whitehead products admits the following presentation:
\begin{equation*}
\pi_{\bullet+1}(\Conf_{\Cyc_n}(\mathbb{C})) \otimes \mathbb{Q} = 
\mathsf{Lie} \left\langle 
\begin{array}{c}
y^0_1, \ldots, y^0_n, \\
u^{n-3}
\end{array}
\left| 
\begin{array}{c}
[y_i, y_j] = 0, \\
\sum_{i=1}^{n} [y_i, u] = 0
\end{array}
\right.
\right\rangle
\end{equation*}

Moreover, the $L_{\infty}$-algebra contains a nontrivial higher Lie bracket
\begin{equation*}
l_{n-1}(y_2, \ldots, y_n) = 
- l_{n-1}(y_1, y_3, \ldots, y_n) = 
\cdots = 
(-1)^n l_{n-1}(y_1, \ldots, y_{n-1}) = u.
\end{equation*}
\end{theorem}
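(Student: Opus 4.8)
The plan is to reduce the statement to a computation with the cohomology ring and its Koszul dual, exploiting the fact that $\Conf_{\Cyc_n}(\Cmplx)$ is the complement in $\Cmplx^n$ of the complex graphic arrangement $\{z_i=z_{i+1}\}_{i\in\mathbb{Z}/n}$. Complements of complex hyperplane arrangements are formal spaces (the contractad-formality results of this paper require chordal graphs, but space-level formality of each arrangement complement is classical). Hence the whole rational homotopy type --- and in particular the homotopy Lie algebra together with its transferred $L_{\infty}$-structure --- is determined by the cohomology algebra $A=H^{\bullet}(\Conf_{\Cyc_n}(\Cmplx);\mathbb{Q})$ as its Koszul-dual $L_{\infty}$-algebra, i.e. as the homology of the quadratic Quillen model $(\mathbb{L}(V),\partial_2)$ with $V=s^{-1}\widetilde H_{\bullet}(\Conf_{\Cyc_n}(\Cmplx);\mathbb{Q})$ and $\partial_2$ dual to the cup product.

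First I would make $A$ explicit. The cycle matroid of $\Cyc_n$ has a single circuit, the full edge set, so the Orlik--Solomon presentation is $A=\Lambda(e_1,\dots,e_n)/(r)$ with $\deg e_i=1$ and the one relation $r=\partial(e_1\cdots e_n)=\sum_{i=1}^n(-1)^{i-1}e_1\cdots\widehat{e_i}\cdots e_n$ in degree $n-1$; equivalently this follows from Theorem~\ref{thm::cohomology_twisted_lie} applied to $X=\Cmplx$. Thus $A^k=\Lambda^k$ for $k\le n-2$, $\dim A^{n-1}=n-1$ and $A^{\ge n}=0$. Since $\Cyc_n$ contains no triangle for $n\ge 5$, the algebra $A$ has no quadratic relations and the cup product $\Lambda^2 H^1\to H^2$ is an isomorphism. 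The quadratic part of $A$ is therefore the full exterior algebra, which is Koszul with symmetric Koszul dual; this immediately forces the degree-$0$ part of the homotopy Lie algebra (the holonomy Lie algebra, which for a formal space is the associated graded of the Malcev Lie algebra of $\pi_1$) to be abelian, giving the relations $[y_i,y_j]=0$ with $y_i$ dual to $e_i$.

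Next I would extract the higher generator and the remaining relation by comparing $A$ with the full exterior algebra $\Lambda(e_1,\dots,e_n)$, whose associated space is rationally the $n$-torus with abelian, coformal homotopy Lie algebra on $y_1,\dots,y_n$. Relative to $\Lambda(e_1,\dots,e_n)$, the algebra $A$ has exactly one fewer class in degree $n-1$ (namely $r$) and one fewer in degree $n$ (namely $e_1\cdots e_n$, which lies in the ideal $(r)$). Running this comparison through the quadratic Quillen model (or, dually, through a change-of-rings spectral sequence), I expect the degree-$(n-1)$ defect to contribute a single new homotopy generator dual to $r$ in $\pi_{n-2}\otimes\mathbb{Q}$, i.e. in Lie-degree $n-3$ --- this is $u$ --- while the degree-$n$ defect contributes a single relation among the brackets $[y_i,u]$, which one identifies as $\sum_{i=1}^n[y_i,u]=0$. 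For $n\ge 5$ the generator $u$ sits in degree $\ge 2$, so it genuinely records higher homotopy and does not interact with the $\pi_1$-generators $y_i$; this yields the claimed presentation $\mathsf{Lie}\langle y_i,u\mid [y_i,y_j]=0,\ \sum_i[y_i,u]=0\rangle$.

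Finally, to detect non-coformality I would compute the transferred $L_{\infty}$-structure on $L=H_{\bullet}(\mathbb{L}(V),\partial_2)$ by homotopy transfer, or equivalently compute the $(n-1)$-fold higher Whitehead product of the $y_i$ as the Massey-type dual of the single relation $r$. Because $A$ is \emph{not} Koszul --- its minimal relation lies in degree $n-1>2$ --- the transferred structure cannot be purely quadratic, and the only operation of the correct internal degree is an $(n-1)$-ary bracket $l_{n-1}$ of degree $n-3$; matching it against the monomials of $r$ produces, up to the standard Koszul signs, the identities $l_{n-1}(y_2,\dots,y_n)=-l_{n-1}(y_1,y_3,\dots,y_n)=\cdots=u$. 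The hard part will be this last step: proving that $l_{n-1}$ is genuinely nonzero rather than a gauge artifact and pinning down its signs, which amounts to showing that the relevant higher Massey product is defined and nontrivial and that no lower operation can absorb $r$. This is precisely where the failure of Koszulity of $A$ must be converted into an honest tree-sum computation in the transfer, and it is exactly the first obstruction to coformality. All remaining steps are bookkeeping once $A$ and the formality of $\Conf_{\Cyc_n}(\Cmplx)$ are in hand.
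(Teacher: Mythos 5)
Your overall reduction is the same as the paper's: use formality of the arrangement complement to replace $\Conf_{\Cyc_n}(\mathbb{C})$ by the Orlik--Solomon algebra $\OS(\Cyc_n)=\Lambda(e_1,\dots,e_n)/(r)$, and then read the homotopy Lie algebra and its higher brackets off the Koszul-dual side, using $\mathcal{U}(\pi_{\bullet+1}\otimes\mathbb{Q})\cong\mathrm{Ext}^{\bullet}_{\OS(\Cyc_n)}(\mathbb{Q},\mathbb{Q})$. Where you diverge is in how that dual side is computed: you propose a comparison with the full exterior algebra run through the quadratic Quillen model and homotopy transfer, whereas the paper computes a noncommutative Gr\"obner basis for $\OS(\Cyc_n)$ (Lemma~\ref{lem::Grobner::Cycle}), enumerates the Anick chains explicitly (Lemma~\ref{lem::Anick::Cycle}), proves the Anick resolution is \emph{minimal} for $n\geq 5$ via the congruence $s\equiv m \bmod (n-3)$ on chain gradings (Lemma~\ref{lem::Anick::resol::cycle}), and then shows the Ext algebra is quadratic Koszul by exhibiting a quadratic Gr\"obner basis whose normal words biject with the Anick chains.

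The genuine gap in your plan is the completeness of the presentation. Your comparison with $\Lambda(e_1,\dots,e_n)$ correctly predicts one new generator $u$ dual to $r$ and one new relation $\sum_i[y_i,u]=0$ dual to $e_1\cdots e_n$, but ``I expect'' is doing all the work: since $r$ is far from a regular element (its annihilator contains every $e_ie_j$), the change-of-rings comparison does not automatically terminate after two steps, and $\mathrm{Ext}_{\OS(\Cyc_n)}(\mathbb{Q},\mathbb{Q})$ is in fact vastly larger than $\mathrm{Ext}_{\Lambda}(\mathbb{Q},\mathbb{Q})$ --- the Anick chains contain arbitrarily many factors $(\omega_2\cdots\omega_n)$. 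To rule out further generators and relations you need either the full enumeration of $\mathrm{Tor}$ (which is exactly what the minimal Anick resolution provides, and where the hypothesis $n\geq 5$ is actually consumed: minimality fails when $n-3<2$) or a Hilbert-series verification that your conjectured quadratic algebra has the right size, as the paper does via the bijection between normal words and chains. Your triangle-freeness remark does not separate $n=4$ from $n\geq 5$, so as written your argument does not locate where the hypothesis enters. The second gap --- nontriviality and signs of $l_{n-1}$ --- you flag yourself; in the paper this again falls out of minimality of the resolution, since the differential on the generator corresponding to the cyclic relation has no lower-order terms to absorb it, so the weight-$(n-1)$ component $d_{n-1}^{\vee}$ is forced to be the stated $(n-1)$-ary bracket.
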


For higher dimensions ($d>2$), we do not yet have a satisfactory 
algebraic model of the little disks contractad $\D_d$.  
Nevertheless, we are able to treat the case of chordal graphs and prove the following:

\begin{theorem}
{\rm (Theorem~\ref{thm:chordal:formal})}
For $d\geq 1$, the little $d$-disks contractad $\D_d$ is formal and coformal over $\mathbb{R}$ in the class of chordal graphs.
\end{theorem}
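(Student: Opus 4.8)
The plan is to transport Kontsevich's configuration-space-integral proof of operadic formality to the contractad setting, built on the Fulton--MacPherson model $\FM_d$ of Theorems~\ref{thm::FMn_is_En} and~\ref{thm::FM:M} together with the wonderful compactifications. Working over $\Real$, I would assemble a zig-zag of Hopf-contractad morphisms
\[
H^\bullet(\D_d)\xrightarrow{\ \iota\ }\Graphs_d\xrightarrow{\ I\ }\Omega_{\mathrm{PA}}(\FM_d),
\]
where $\Graphs_d$ is a Lambrechts--Voli\'c-type graph contractad, $\iota$ includes the graphic Gerstenhaber cohomology as the span of graphs \emph{without} internal vertices (a subcomplex with zero differential), and $I$ is the fiberwise integration of propagator forms. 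Since $\iota$ and $I$ are defined on \emph{all} graphs, the only thing to prove is that each is a quasi-isomorphism on every chordal $\Gr$; contractad-compatibility of the resulting formality map is then inherited for free, so I do not need the chordal graphs to form a sub-contractad.

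First I would show that $I$ is a quasi-isomorphism for every graph $\Gr$, with no chordality needed. The map sends a graph to the integral, along the fiber of the projection forgetting the internal points, of the wedge of $\SO(d)$-invariant unit propagator forms attached to its edges; this is a morphism of Hopf contractads by the usual Fubini and pullback identities. That it is a quasi-isomorphism rests on adapting Kontsevich's vanishing lemmas to the graphical compactification: integrals attached to graphs with a univalent or bivalent internal vertex vanish, and a dimension count on the boundary strata of $\FM_d(\Gr)$ kills the remaining terms. This is the technically heaviest, but most template-like, part of the argument.

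The crux is to prove that $\iota$ is a quasi-isomorphism precisely when $\Gr$ is chordal; equivalently, granting that $I$ is a quasi-isomorphism, that $H^\bullet(\Conf_\Gr(\Real^d))$ is generated as an algebra by the edge classes in degree $d-1$, subject only to the graphic Arnold relations. This is exactly the statement that fails for cycles, where the class dual to the generator $u^{n-3}$ of Theorem~\ref{thm::cycl::homotopy} is indecomposable and is necessarily represented by a graph carrying internal vertices. To establish edge-generation in the chordal case I would induct on $|V_\Gr|$ along a perfect elimination ordering: removing a simplicial vertex $v$, whose neighbors form a clique and hence occupy distinct positions, exhibits $\Conf_\Gr(\Real^d)$ as a Fadell--Neuwirth fibration over $\Conf_{\Gr\setminus v}(\Real^d)$ with fiber $\Real^d\setminus\{\deg(v)\text{ points}\}\simeq\bigvee S^{d-1}$ and an explicit section (place the point at $v$ far out along a fixed axis). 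The section degenerates the Serre spectral sequence, the fiber cohomology is generated in degree $d-1$ by the classes linking the neighbors of $v$, that is, by the new edges at $v$, and $\Gr\setminus v$ is again chordal, so by induction the whole cohomology is edge-generated. Matching this presentation with the graphic Orlik--Solomon algebra cut out by the supersolvable blow-up order of the wonderful compactification then identifies $H^\bullet(\D_d)(\Gr)$ with $H^\bullet(\Conf_\Gr(\Real^d))$ through $\iota$.

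I expect the main obstacle to be the proof that $I$ is a quasi-isomorphism, that is, controlling the graphical configuration-space integrals over the boundary of $\FM_d(\Gr)$: the boundary stratification of the graphical compactification is indexed by contractions of connected subgraphs rather than by arbitrary subsets, so the vanishing lemmas must be re-derived stratum by stratum and checked to be compatible with the contractad insertions. A secondary, purely combinatorial subtlety is that the perfect-elimination induction computes $H^\bullet(\Conf_\Gr(\Real^d))$ only as a module; upgrading this to an algebra isomorphism onto the graphic Orlik--Solomon algebra, so that $\iota$ is genuinely a quasi-isomorphism of commutative differential graded algebras rather than merely a linear one, requires showing that the Arnold relations exhaust all relations, and it is here that the chordal (supersolvable) hypothesis is used in an essential way.
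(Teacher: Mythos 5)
Your overall architecture --- a zig-zag $\OS_d \leftarrow \Graphs_d \rightarrow \Omega_{PA}(\FM_d)$ through a graphical Kontsevich graph complex, with an induction along a perfect elimination ordering --- matches the paper's (\S\ref{sec:sub:chordal:formality}), but the logical dependencies are inverted in a way that breaks the argument. You claim the integration map $I\colon\Graphs_d\to\Omega_{PA}(\FM_d)$ is a quasi-isomorphism for \emph{every} graph and that chordality is only needed for the combinatorial leg $\iota$. Both halves of this are wrong. First, $H^{\bullet}(\Omega_{PA}(\FM_d(\Gr)))\cong\OS_d(\Gr)$ for all $\Gr$, whereas $H^{\bullet}(\Graphs_d(\Cyc_4))\not\cong\OS_d(\Cyc_4)$ (the paper points this out explicitly after Proposition~\ref{prop::Graphs_to_OS}); so $I$ cannot be a quasi-isomorphism for $\Cyc_4$, and no boundary vanishing lemmas can fix this --- the middle term of the zig-zag simply has the wrong cohomology off the chordal class. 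Moreover, even the \emph{definition} of the fiber-integration map needs chordality: the form $\eta_{\D}$ is an integral along $\FM_d(\Gr_{\D})\to\FM_d(\Gr)$, and the semi-algebraic bundle property of this projection (Lemma~\ref{lemma::fiber_FM}) is established via the angles-and-relative-distances presentation of $\FM_d(\Gr)$, which the paper notes has no analogue for general graphs. In the paper the quasi-isomorphism of $\hat{\mathbb{I}}$ is then \emph{deduced} from that of $\mathbb{I}$ (since $\hat{\mathbb{I}}(\Omega_e)$ represents $\omega_e$), not proved by stratum-by-stratum vanishing arguments.

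Second, the statement you aim your perfect-elimination induction at --- that $H^{\bullet}(\Conf_{\Gr}(\Real^d))$ is edge-generated subject to the graphic Arnold relations, and that this ``fails for cycles'' --- is not where chordality enters. That presentation holds for \emph{all} graphs: $H^{\bullet}(\D_d)(\Gr)$ is the generalized Orlik--Solomon algebra $\OS_d(\Gr)$ for every $\Gr$ (\S\ref{sec:sub:E_d}). The generator $u^{n-3}$ of Theorem~\ref{thm::cycl::homotopy} lives in the rational homotopy Lie algebra (it is dual to the cyclic relation, i.e.\ to a syzygy), not in cohomology; there is no indecomposable cohomology class behind it. What actually requires chordality, and what your induction should target, is that the graph complex has the correct cohomology, i.e.\ that $\mathbb{I}\colon\Graphs_d\to\OS_d$ is a quasi-isomorphism. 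The paper proves this by reducing to connected diagrams, passing to the subcomplex in which a chosen simplicial vertex $v$ has a single, external neighbor, and matching the recursion $\dim H^{\bullet}(\Graphs^{\mathrm{con}}_d(\Gr))=\deg(v)\cdot\dim H^{\bullet}(\Graphs^{\mathrm{con}}_d(\Gr\setminus v))$ against the identity $\dim\OS_d^{\mathrm{top}}(\Gr)=\deg(v)\cdot\dim\OS_d^{\mathrm{top}}(\Gr\setminus v)$. A further small point: the natural map is the projection $\Graphs_d\to\OS_d$ killing diagrams with internal vertices, not an inclusion of the span of internal-vertex-free diagrams --- that span is a free algebra in which the Arnold relations hold only up to the boundary of the tripod diagram, so your $\iota$ as stated is not a chain map of algebras.
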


Moreover, in dimension $d=2$, one can compare two distinct compactifications of Fulton–MacPherson  -- either as a complex or as a real algebraic variety.  
The compactification via complex geometry, also known as the \emph{wonderful compactification} after~\cite{de1995wonderful}, changes the topology but provides additional structure from algebraic geometry.
In particular, we gain a detailed description of the line bundles and their Chern classes (called $\psi$-classes) indexed by vertex points. We explain the interaction of $\psi$-classes with the contractad structure. 
As a corollary, we show that the contractads $\D_2$ are well defined in the category of log-spaces of~\cite{vaintrob2021formality,dupont2024logarithmic}, leading to the following:

\begin{theorem}
\label{thm::D2:formal:intro}
{\rm (Theorem~\ref{thm:D2:log:formal})}
The little disks contractads $\D_2$ and its framed analogue $\fD_2$ are formal over $\mathbb{C}$.
\end{theorem}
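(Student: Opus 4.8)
The plan is to prove $\Cmplx$-formality "by weights," following the logarithmic/Hodge-theoretic strategy of Vaintrob~\cite{vaintrob2021formality} and Dupont~\cite{dupont2024logarithmic}, rather than by the Kontsevich-integral method underlying the real chordal statement (Theorem~\ref{thm:chordal:formal}). The starting point is the identification $\D_2(\Gr)\simeq \Conf_{\Gr}(\Cmplx)$, which realizes each space of operations as the complement of the graphic hyperplane arrangement $\{z_i=z_j:\{i,j\}\in E_{\Gr}\}$ in $\Cmplx^{V_{\Gr}}$. Such complements are smooth complex varieties, and the whole family carries a mixed Hodge structure functorial in algebraic maps; the key point is that these Hodge structures are \emph{pure of Tate type} and that the weight decomposition they induce is compatible with the contractad structure.

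First I would upgrade the Fulton--MacPherson picture of Theorem~\ref{thm::FM:M} to the algebro-geometric setting using the wonderful compactifications $\Y_{\Gr}$ of De Concini--Procesi~\cite{de1995wonderful} associated to the graphic building set. Each $\Y_{\Gr}$ is a smooth projective variety whose boundary $D_{\Gr}=\Y_{\Gr}\setminus\Conf_{\Gr}(\Cmplx)$ is a normal crossing divisor, and the boundary strata are indexed by the same combinatorics of contractions of connected subgraphs that governs the contractad composition. The goal of this step is to show that the collection $\{(\Y_{\Gr},D_{\Gr})\}$ forms a contractad object in the category of log-smooth spaces, with the composition maps realized by inclusions of boundary divisors; the $\psi$-classes and their interaction with contractions, established earlier in the paper, supply exactly the data needed to see this compatibility.

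Next I would apply the logarithmic de Rham functor $(X,D)\mapsto \Omega^\bullet_X(\log D)$. This produces a cdga quasi-isomorphic to the cochains of $\Conf_{\Gr}(\Cmplx)$, carries the weight filtration $W_\bullet$ by pole order, and---being defined by a lax monoidal construction on log-smooth spaces---assembles into a cdga contractad whose cohomology is the cohomology contractad $H^\bullet(\D_2)$. The crucial input is purity: the cohomology of any hyperplane arrangement complement is the Orlik--Solomon algebra, generated in weight $2$ by the classes $d\log(z_i-z_j)$, so $H^k(\Conf_{\Gr}(\Cmplx))$ is pure of weight $2k$. Purity forces the weight spectral sequence to degenerate at $E_1$ and the associated weight splitting to identify the logarithmic model with its cohomology. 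Because this splitting is determined functorially by the weight filtration, and the contractad maps are algebraic, hence strictly compatible with $W_\bullet$, the resulting quasi-isomorphism is a morphism of contractads---giving formality of $\D_2$ over $\Cmplx$.

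For the framed contractad $\fD_2$ the framing is governed by the rotation action of $\SO(2)$, equivalently by the tautological line bundles at the vertex points whose Chern classes are the $\psi$-classes; these are algebraic classes of weight $2$, so adjoining them preserves purity of Tate type. Running the same weight-splitting argument on the framed compactifications $\fFM_2$ (Theorem~\ref{thm::FM:M}) yields formality of $\fD_2$. I expect the main obstacle to be the second step: verifying that the wonderful compactifications genuinely organize into a contractad in log-spaces, that is, that the boundary stratification of $\Y_{\Gr}$ matches graph contraction across the full combinatorial range of connected subgraphs, and that the log de Rham functor is monoidal for this graph-indexed structure. This is more delicate than in the operadic (complete-graph) case because the graphic building set and its nested sets depend intricately on $\Gr$; purity itself, by contrast, holds uniformly for all graphs and is the easy part.
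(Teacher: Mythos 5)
Your overall strategy is the same as the paper's: Section~\ref{sec::logformality} proves Theorem~\ref{thm:D2:log:formal} by putting a logarithmic structure on the wonderful compactifications $\bM(\Gr)$, using the $\psi$-classes of \S\ref{sec:sub:psi:classes} to make the contractad maps into (virtual) log morphisms, and then invoking purity of the mixed Hodge structure to conclude formality via the criterion of Vaintrob--Dupont. So the references, the compactifications, and the purity mechanism are all the right ones.

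There is, however, a concrete gap in your middle step. The open stratum of the wonderful compactification is $\M(\Gr)=\Conf_{\Gr}(\mathbb{C})/\Aff_1(\mathbb{C})$, not $\Conf_{\Gr}(\mathbb{C})$ itself, so the log de Rham complex of the divisorial pair $(\bM(\Gr),\partial\bM(\Gr))$ computes $H^{\bullet}(\M(\Gr))$ and misses the circle factor $\Aff_1(\mathbb{C})\simeq S^1$; moreover, the composition maps $\circ^{\Gr}_G$ land \emph{inside} the boundary divisor $D_G$, so they cannot be promoted to morphisms of divisorial log schemes at all --- the pullback log structure along $D_G\hookrightarrow\bM(\Gr)$ necessarily acquires the normal bundle $\mathcal{N}_G$. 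Both problems are solved simultaneously by enlarging the log structure to the Deligne--Faltings datum $(\bM(\Gr),\partial\bM(\Gr),\mathcal{T}_{\infty})$ (and all the $\mathcal{T}_v$ in the framed case): the self-intersection formula $\mathcal{N}_G\cong\mathcal{T}_{\{G\}}\otimes\mathcal{T}_{\infty}$ of Proposition~\ref{prop::normalbundle} supplies the monomial map $\mathcal{T}_{\{G\}}\oplus\mathcal{T}_{\infty}\to\mathcal{T}_{\{G\}}\otimes\mathcal{T}_{\infty}\cong\mathcal{N}_G$ that defines the log composition, and the Kato--Nakayama realization of the resulting log contractad is the Fulton--MacPherson contractad $\FM_2$ (Theorem~\ref{thm::KN_is_FM}), which is what actually ties the construction back to $\D_2$. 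You flagged the compatibility of the boundary stratification with graph contraction as the main obstacle, but the real issue is not the combinatorics of nested sets (which work uniformly); it is that the divisorial log structure is the wrong one, and without the extra line bundle neither the homotopy type nor the contractad structure comes out correctly. Once that is fixed, your purity argument (each $H^k(\M(\Gr))$ pure of weight $2k$, hence the log scheme is acyclic and proper, hence formal) is exactly the paper's conclusion.
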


Finally, we consider the acyclic direction contractad $\AD_d$ (see~\S\ref{sec:sub:Acyclic:contractad}).  
This is a contractad in the category of posets that generalizes the combinatorial model of the little disks operad in posets introduced in~\cite{berger1997real}. Its components are the posets of acyclic edge-weighted directions of the underlying graphs with a partial order arising from comparing weights of directed edges. We prove that $\AD_1$ coincides with associative contractad $\Ass$, and that $\AD_2$ provides a model for $\D_2$ in the following sense: 

\begin{theorem}{\rm (Corollary~\ref{cor::ADisEn})}
A geometric realisation $|\AD_2|$ of the poset contractad $\AD_2$ is a $E_2$-contractad. In other words, we have a zig-zag of weak homotopy equivalence of topological contractads relating $|\AD_2|$ and $\D_2$
\[
|\AD_2|\overset{\simeq}{\leftarrow}*\overset{\simeq}{\rightarrow}\D_2.
\]
\end{theorem} For $d>2$, a similar statement holds if we restrict ourselves to chordal graphs. For arbitrary graphs, this is an open problem.

\subsection{Structure of the paper}
The paper is organised as follows:

In \S\ref{sec::Ed} we remind the definitions of the main objects: contractads in~\S\ref{sec:sub:contractads}, Lie and commutative contractads in~\S\ref{sec:sub::com::Lie}, the Little Disks contractad (and its framed version) in~\S\ref{sec:sub:E_d} and~\S\ref{sec:sub::fE_d}, respectively.

In~\S\ref{sec::Graphic::Lie} we
define twisted algebras over a contractad and apply them to the cohomology of graphical configuration spaces.  

In Section~\S\ref{sec::compactifications} we describe compactifications of graphical configuration spaces, extending Fulton--MacPherson compactifications to contractads. These constructions clarify the link with the wonderful contractad and yield a topological action of the Little Disks contractad on the union of graphical configuration spaces. In~\S\ref{sec::polytopes} we show that in dimension $1$ the connected components of the sets $\FM_1(\Gamma)$ are isomorphic to the poset associahedra of~\cite{galashin2021p}.

Section~\S\ref{sec::chordal_cycles} studies the rational homotopy type of graphical configuration spaces for two graph families. \S\ref{sec:sub:chordal}-\S\ref{sec:sub:chordal:formality} cover chordal graphs, where the description is parallel to the case of operads, while~\S\ref{sec:sub:cycles} shows the existence of nonzero Massey products for cycles.

In~\S\ref{sec::logformality} we prove the formality of the Little Disks contractad $\D_2$ via logarithmic geometry, generalizing~\cite{vaintrob2021formality, dupont2024logarithmic}. The proof relies on the description of $\psi$-classes of the wonderful contractad given in~\S\ref{sec:sub:psi:classes}.

Section~\S\ref{sec::combinatorial} extends the known cell structure of the Little Disks operad $\mathcal{E}_2$ to the contractad $\D_2$, making it compatible with compositions.

It is worth noting that Sections~\S\ref{sec::Ed} and~\S\ref{sec::compactifications} introduce definitions and constructions that are used throughout the paper. In contrast, Sections~\S\ref{sec::Graphic::Lie}, \S\ref{sec::chordal_cycles}, \S\ref{sec::logformality}, and \S\ref{sec::combinatorial} are largely self-contained, each focusing on different aspects of configuration spaces and the Little Disks contractad. Their interrelations are summarized in the following diagram:
$$
\begin{tikzcd}
   \S\ref{sec::Ed} \arrow[r, Rightarrow] \arrow[rr, bend left=20, Rightarrow] 
   & \S\ref{sec::Graphic::Lie}
   & \S\ref{sec::compactifications} \arrow[r, Rightarrow] \arrow[rr, Rightarrow, bend left=20] \arrow[rrr, Rightarrow, bend left=25]
   & \S\ref{sec::chordal_cycles}
   & \S\ref{sec::logformality} 
   & \S\ref{sec::combinatorial}
\end{tikzcd}.
$$
\textbf{Acknowledgments and Funding.}
The authors are grateful to the anonymous referee for insightful comments regarding the chordal case that helped improve the first draft. \\
The work of the second author was funded within the framework of the HSE University Basic Research Program.

\renewcommand{\theequation}{\thesection.\arabic{equation}}

\section{Little disks contractads}
\label{sec::Ed}
In this section, we recall the basics of contractads and describe the (framed) little disks contractads. 
\subsection{Contractads}
\label{sec:sub:contractads}
In this subsection, we briefly recall the notion of contractads, see~\cite[Sec 1]{lyskov2023contractads}.

Consider the \emph{groupoid of connected graphs} $\mathsf{CGr}$ whose objects are non-empty undirected connected simple\footnote{ by simple we mean that a graph does not contain loops and double edges} graphs $\Gr=(V_{\Gr}, E_{\Gr})$ and whose morphisms are isomorphisms of graphs. We use the following notations for concrete type of graphs:
\begin{itemize}
\label{typesofgraphs}
     \item the path graph $\Path_n$ on the vertex set $\{1,\cdots, n\}$ with edges $\{(i,i+1)| 1\leq i \leq n-1 \}$,  
     \item the cycle graph $\Cyc_n$ on the vertex set $\mathbb{Z}/n\mathbb{Z}$ with edges $\{(i,i+1)| i\in \mathbb{Z}/n\mathbb{Z} \}$, 
     \item the complete graph $\K_n$ on the vertex set $\{1,\cdots, n\}$ and the edges $\{(i,j)|i\neq j\}$,
    \item the stellar graph $\St_n$ on the vertex set $\{0,1,\cdots, n\}$ with edges $\{(0,i)| 1\leq i \leq n \}$. The vertex $"0"$ adjacent to all vertices is called the "core". 
        \item For a partition $\lambda=(\lambda_1\geq \lambda_2\geq\cdots\geq\lambda_k)$, we consider the complete multipartite graph $\K_{\lambda}$.  This graph consists of blocks of vertices of sizes $\lambda_1,\lambda_2,\cdots,\lambda_k$, such that two vertices are adjacent if and only if they belong to different blocks.
\end{itemize}
A \emph{tube} of a graph $\Gr$ is a non-empty subset $G$ of vertices such that the induced subgraph $\Gr|_G$ is connected.  If the tube consists of one vertex, we call it trivial. A \emph{partition of a graph} $\Gr$ is a partition of the vertex set whose blocks are tubes. We denote by $\parti(\Gr)$ the set of partitions of the graph $\Gr$. We use notation $I\vdash \Gr$ for partitions. For a partition $I$ of graph $\Gr$, the contracted graph, denoted $\Gr/I$, is the graph obtained from $\Gr$  by contracting each block of $I$ to a single vertex.
In a contracted graph $\Gr/I$, multiple edges between two vertices are treated as a single edge. Therefore, a contracted graph can also be regarded as a simple graph.

\begin{figure}[ht]
  \centering
  \begin{gather*}
  \vcenter{\hbox{\begin{tikzpicture}[scale=0.7]
    \fill (0,0) circle (2pt);
    \node at (0,0.6) {1};
    \fill (1,0) circle (2pt);
    \node at (1,0.6) {2};
    \fill (2,0) circle (2pt);
    \node at (2,0.6) {3};
    \fill (3,0) circle (2pt);
    \node at (3,0.6) {4};
    \fill (4,0) circle (2pt);
    \node at (4,0.6) {5};
    \draw (0,0)--(1,0)--(2,0)--(3,0)--(4,0);
    \draw[dashed, rounded corners=5pt] (-0.25,-0.25) rectangle ++(1.5,0.5);
    \draw[dashed, rounded corners=5pt] (2.75,-0.25) rectangle ++(1.5,0.5);
    \draw[dashed] (2,0) circle (7pt);
    \end{tikzpicture}}}
    \quad
    \longrightarrow
    \quad
  \vcenter{\hbox{\begin{tikzpicture}[scale=0.7]
    \fill (0,0) circle (2pt);
    \node at (0,0.5) {\{1,2\}};
    \fill (1.5,0) circle (2pt);
    \node at (1.5,0.5) {\{3\}};
    \fill (3,0) circle (2pt);
    \node at (3,0.5) {\{4,5\}};
    \draw (0,0)--(1.5,0)--(3,0);    
    \end{tikzpicture}}}
    \\
    \\
  \vcenter{\hbox{\begin{tikzpicture}[scale=0.7]
    \fill (0,0) circle (2pt);
    \node at (-0.4,-0.3) {1};
    \fill (2,0) circle (2pt);
    \node at (2.4,-0.3) {4};
    \fill (0,2) circle (2pt);
    \node at (-0.4,2.3) {2};
    \fill (1,1) circle (2pt);
    \node at (1,0.5) {5};
    \fill (2,2) circle (2pt);
    \node at (2.4,2.3) {3};
    \draw (0,0)--(2,0)--(2,2)--(0,2)--cycle;
    \draw (0,0)--(1,1)--(2,2);
    \draw (2,0)--(1,1)--(0,2);
    \draw[dashed] (0,0) circle (7pt);
    \draw[dashed] (1,1) circle (7pt);
    \draw[dashed] (2,0) circle (7pt);
    \draw[dashed, rounded corners=5pt] (-0.25,1.75) rectangle ++(2.5,0.5);
    \end{tikzpicture}}}
    \quad
    \longrightarrow
    \vcenter{\hbox{\begin{tikzpicture}[scale=0.75]
    \fill (0,0) circle (2pt);
    \node at (-0.4,-0.3) {\{1\}};
    \fill (1,1) circle (2pt);
    \node at (1,0.5) {\{5\}};
    \fill (1,2) circle (2pt);
    \node at (1,2.4) {\{2,3\}};
    \fill (2,0) circle (2pt);
    \node at (2.4,-0.3) {\{4\}};
    \draw (0,0)--(2,0)--(1,2)-- cycle;
    \draw (0,0)--(1,1)--(1,2);
    \draw (2,0)--(1,1);
    \end{tikzpicture}}}
    \quad
    \quad
    \quad
    \vcenter{\hbox{\begin{tikzpicture}[scale=0.7]
    \fill (-0.63,1.075)  circle (2pt);
    \node at (-0.9,1.4) {1};
    \fill (0.63,1.075)  circle (2pt);
    \draw[dashed] (0.63,1.075) circle  (7pt);
    \node at (0.9,1.4) {2};
    \fill (-1.22,0) circle (2pt);
    \node at (-1.6,0) {6};
    \fill (1.22,0) circle (2pt);
    \node at (1.6,0) {3};
    \draw[dashed] (1.22,0) circle  (7pt);
    \fill (-0.63,-1.075)  circle (2pt);
    \node at (-0.9,-1.4) {5};
    \fill (0.63,-1.075)  circle (2pt);
    \node at (0.9,-1.4) {4};
    \draw[dashed] (0.63,-1.075) circle  (7pt);
    \draw (-1.22,0)--(-0.63,1.075)--(0.63,1.075)--(1.22,0)--(0.63,-1.075)--(-0.63,-1.075)--cycle;
    \draw[dashed] (-1.6,0)[rounded corners=15pt]--(-0.4,1.7)[rounded corners=15pt]--(-0.4,-1.7)[rounded corners=12pt]--cycle;
    \end{tikzpicture}}}
    \quad\longrightarrow\quad
    \vcenter{\hbox{\begin{tikzpicture}[scale=0.7]
     \fill (0.63,1.075)  circle (2pt);
     \node at (0.9,1.5) {\{2\}};
    \fill (-1.22,0) circle (2pt);
    \node at (-2.2,0) {\{1,5,6\}};
    \fill (1.22,0) circle (2pt);
    \node at (1.75,0) {\{3\}};
    \fill (0.63,-1.075)  circle (2pt);
    \node at (0.9,-1.5) {\{4\}};
    \draw (-1.22,0)--(0.63,1.075)--(1.22,0)--(0.63,-1.075)--cycle;
    \end{tikzpicture}}}
  \end{gather*}
  \caption{Examples of partitions of graphs and associated contractions.}
  \label{contrpic}
\end{figure}
A \emph{graphical collection} with values in a symmetric monoidal category $\C$ is a contravariant functor 
\[
\Orb\colon \mathsf{CGr}^{\mathrm{op}}\rightarrow \C.
\]The category of graphical collections forms a monoidal category with respect to \emph{contracted} product
\begin{equation}
\label{eq::contract::product}
    (\Pop \circ \Q)(\Gr) := \bigoplus_{I \vdash \Gr} \Pop(\Gr/I) \otimes \bigotimes_{G \in I} \Q(\Gr|_G),
\end{equation}  
where the sum ranges over all partitions of $\Gr$. This operation is associative, and the graphical collection $\mathbb{1}$ concentrated in the one-vertex graph $\Path_1$, with $\mathbb{1}(\Path_1)=1_{\C}$, is the unit for this operation.

A \emph{contractad} is a monoid in this monoidal category, i.e., it is a graphical collection $\Pop$ with a collection of maps
\[
    \gamma_I^{\Gr}\colon \Pop(\Gr/I)\otimes \bigotimes_{G \in I} \Pop(\Gr|_G)\to \Pop(\Gr),
\] ranging over all graphs and their partitions, and a unit $\eta\colon 1_{\C}\to \Pop(\Path_1)$, satisfying certain coherence relations.

An equivalent way to present contractads is via \emph{infinitesimal compositions} that are analogous to \emph{partial compositions} $\circ_i\colon \Orb(n)\otimes\Orb(m)\to\Orb(n+m-1)$ for operads.  For a tube $G$, we denote by $\Gr/G$ the contraction of $\Gr$ by the partition with the only non-trivial block $G$. When $\Pop$ is a contractad, we define $\circ^{\Gr}_G\colon \Pop(\Gr/G)\otimes \Pop(\Gr|_G)\to \Pop(\Gr)$ by
\[
    \Pop(\Gr/G)\otimes\Pop(\Gr|_G) \cong \Pop(\Gr/G)\otimes\Pop(\Gr|_G)\otimes \bigotimes_{v \not\in G} 1_{\C} \overset{\Id \otimes \eta^{\otimes}}{\hookrightarrow} \Pop(\Gr/G)\otimes\Pop(\Gr|_G)\otimes \bigotimes_{v \not\in G}\Pop(\Gr|_{\{v\}}) \overset{\gamma}{\rightarrow} \Pop(\Gr).
\]
The infinitesimal compositions $\circ^{\Gr}_G$ completely recover the contractad structure.

\subsection{Commutative and Lie contractads, Suspensions}
\label{sec:sub::com::Lie}
The simplest example of a contractad is the commutative contractad $\Com$ in the category of sets. The contractad has the components $\Com(\Gr)=\{*\}$ with the obvious infinitesimal compositions $\circ^{\Gr}_G\colon \Com(\Gr/G)\times \Com(\Gr|_G)\to\Com(\Gr)$ of the form $(*,*)\mapsto*$. The corresponding contractad in the category of vector spaces is generated by a symmetric generator $m$ in component $\Path_2$, satisfying the relations
\begin{gather}
\text{In }\Path_3:\quad m\circ^{\Path_3}_{\{1,2\}}m=m\circ^{\Path_3}_{\{2,3\}}m,
 \\
\text{In }\K_3:\quad m\circ^{\K_3}_{\{1,2\}}m=m\circ^{\K_3}_{\{2,3\}}m.
\end{gather} From the presentation above, we see the similarity of this contractad with the commutative operad $\mathsf{Com}$. 

The Koszul dual contractad, called the Lie contractad and denoted $\Lie$, is the contractad generated by an anti-symmetric generator $b$ in component $\Path_2$, satisfying the relations
\begin{align}
    \text{In }\Path_3:\quad & b\circ^{\Path_3}_{\{1,2\}}b=b\circ^{\Path_3}_{\{2,3\}}b,
    \\
    \text{In }\K_3:\quad &
b\circ^{\K_3}_{\{1,2\}}b+(b\circ^{\K_3}_{\{1,2\}}b)^{(123)}+(b\circ^{\K_3}_{\{1,2\}}b)^{(321)}=0.
\end{align}

For a dg contractad $\Pop$, we define its suspension $\Susp\Pop$ by
\[
\Susp\Pop(\Gr):=\Pop(\Gr)[n-1]\otimes \mathrm{sgn}_{\Gr},
\] where $\mathrm{sgn}_{\Gr}$ is the restriction of the alternating representation: $\mathrm{sgn}_{\Gr}=\mathrm{Res}_{\Aut(\Gr)}^{\Sigma_{V_{\Gr}}} \mathrm{sgn}_{V_{\Gr}}$. In a similar way, we define the desuspension $\Susp^{-1}$ and, more generally, iterative suspension $\Susp^n$, for $n\in \mathbb{Z}$.

\subsection{Little disks contractad $\D_d$}
\label{sec:sub:E_d}
The little $d$-disks contractad $\D_d$ is a topological contractad, generalising the little $n$-disks operad. Each component of this contractad $\D_d(\Gr)$ consists of configurations of $n$-dimensional disks in the unit disk labeled by the vertex set of a graph, such that interiors of disks corresponding to adjacent vertices do not intersect. In other words, an element of $\D_d(\Gr)$ is a family of ``rectilinear''\footnote{By rectilinear embedding we mean the map given by dilations and translations of $\Real^{d}$} embeddings $\{f_v\colon \mathbb{D}^d\to \mathbb{D}^d, v\in V_{\Gr}\}$ of open discs $\mathbb{D}^d$ satisfying the edge-non-intersecting condition:
$$
(v\ w)\in E_{\Gr} \ \Rightarrow \ \mathrm{Im}(f_v) \cap \mathrm{Im}(f_w) = \emptyset. 
$$
\begin{figure}[ht]
    \centering
    \[
    \vcenter{\hbox{\begin{tikzpicture}
     \draw[thick] (0,0) circle [radius=35pt];
     \draw[thick] (-0.5,0.4) circle [radius=12pt];
     \node at (-0.5,0.4) {1};
     \draw[thick] (-0.25,-0.75) circle [radius=9pt];
     \node at (-0.25,-0.75) {2};
     \draw[thick] (0.5,-0.3) circle [radius=10pt];
     \node at (0.5,-0.3) {3};
     \draw[thick] (0.6,0.5) circle [radius=10pt];
     \node at (0.6,0.5) {4};
    \end{tikzpicture}}}\ ,
    \quad
    \vcenter{\hbox{\begin{tikzpicture}
     \draw[thick] (0,0) circle [radius=35pt];
     \draw[thick] (-0.4,0.4) circle [radius=13pt];
     \node at (-0.4,0.4) {1};
     \draw[thick] (-0.3,-0.6) circle [radius=10pt];
     \node at (-0.3,-0.6) {2};
     \draw[thick] (0.3,0.5) circle [radius=10pt];
     \node at (0.3,0.5) {3};
     \draw[thick] (0.7,-0.3) circle [radius=8pt];
     \node at (0.7,-0.3) {4};    
    \end{tikzpicture}}} \ ,
    \quad
    \vcenter{\hbox{\begin{tikzpicture}
     \draw[thick] (0,0) circle [radius=35pt];
     \draw[thick] (0.3,0.5) circle [radius=16pt];
     \node at (-0.1,0.5) {3};
     \draw[thick] (-0.1,-0.8) circle [radius=9pt];
     \node at (-0.1,-0.8) {2};
     \draw[thick] (-0.5,-0.4) circle [radius=10pt];
     \node at (-0.5,-0.4) {4};
     \draw[thick] (0.3,0.5) circle [radius=8pt];
     \node at (0.3,0.5) {1};
    \end{tikzpicture}}}
    \
    \in 
    \D_2\left( 
\vcenter{\hbox{    \begin{tikzpicture}[scale=0.6, shift={(0,-2)}]
    \fill (0,0) circle (2pt);
    \fill (0,1.5) circle (2pt);
    \fill (1.5,0) circle (2pt);
    \fill (1.5,1.5) circle (2pt);
    \draw (0,0)--(1.5,0)--(1.5,1.5)--(0,1.5)-- cycle;
    \node at (-0.25,1.75) {$1$};
    \node at (1.75,1.75) {$2$};
    \node at (1.75,-0.25) {$3$};
    \node at (-0.25,-0.25) {$4$};
    \end{tikzpicture}}}
    \right)
    \]
    \caption{Example of configurations in $\D_2(\Cyc_4)$.}
\end{figure}

The contractad structure is given by the insertion of an outer disk of one configuration in the interior of a subdisk of another configuration, as in the example in Figure~\ref{fig:disccomp}.

\begin{figure}[ht]
    \centering
    \[
    \quad
     \vcenter{\hbox{\begin{tikzpicture}
     \draw[thick] (0,0) circle [radius=35pt];
     \draw[thick] (-0.5,0.3) circle [radius=11pt];
     \node at (-0.5,0.3) {1};
     \draw[thick] (0.3,-0.5) circle [radius=14pt];
     \node at (0.3,-0.5) {\{2,4\}};
     \draw[thick] (0.15,0.6) circle [radius=13pt];
     \node at (0.15,0.6) {3};
    \end{tikzpicture}}}
    \quad
    \circ^{\Gr}_{\{2,4\}}
     \vcenter{\hbox{\begin{tikzpicture}
     \draw[thick] (0,0) circle [radius=30pt];
     \draw[thick] (-0.355,-0.55) circle [radius=11pt];
     \node at (-0.355,-0.55) {2};
     \draw[thick] (0.1,0.645) circle [radius=11pt];
     \node at (0.1,0.645) {4};    
    \end{tikzpicture}}}
    \quad
    =
    \quad
    \vcenter{\hbox{\begin{tikzpicture}
     \draw[thick] (0,0) circle [radius=50pt];
     \draw[thick] (-0.7,0.5) circle [radius=15pt];
     \node at (-0.7,0.5) {1};
     \draw[dashed, thick] (0.6,-0.7) circle [radius=20pt];
     \draw[thick] (0.7,-0.3) circle [radius=8pt];
     \node at (0.7,-0.3) {4};
     \draw[thick] (0.3,-1) circle [radius=8pt];
     \node at (0.3,-1) {2};
     \draw[thick] (0.15,1) circle [radius=17pt];
     \node at (0.15,1) {3};
    \end{tikzpicture}}}
    \ \in \
    \D_2\left(
      \vcenter{\hbox{\begin{tikzpicture}[scale=0.6]
    \fill (0,0) circle (2pt);
    \fill (0,1.5) circle (2pt);
    \fill (1.5,0) circle (2pt);
    \fill (1.5,1.5) circle (2pt);
    \draw (0,0)--(1.5,0)--(1.5,1.5)--(0,1.5)-- cycle;
    \draw (0,0)--(1.5,1.5);
    \node at (-0.25,1.75) {$1$};
    \node at (1.75,1.75) {$2$};
    \node at (1.75,-0.25) {$3$};
    \node at (-0.25,-0.25) {$4$};
    \end{tikzpicture}}}
    \right)
\]
    \caption{Disks composition in $\D_2$}
    \label{fig:disccomp}
\end{figure}

There is a connection between little disks contractad and, so-called, \emph{graphical configuration spaces}. For a topological space $X$ and graph $\Gr$, we define the graphical configuration space by
\[
\Conf_{\Gr}(X)=\{(x_v)\in X^{V_{\Gr}}|x_v\neq x_w \text{ for }(v,w)\in E_{\Gr}\}.
\] There is the centering map from the component $\D_d(\Gr)$ of little disks contractad to the graphical configuration space, 
\begin{equation}\label{eq::centering_map}
r\colon \D_d(\Gr)\to \Conf_{\Gr}(\mathbb{R}^d), \quad (r_v\mathbb{D}^d+x_v)_{v\in V_{\Gr}}\mapsto (x_v)_{v\in V_{\Gr}},
\end{equation}that maps configurations of disks to configurations of their centers. These maps are component-wise homotopy equivalences.

If we replace each component of the contractad $\D_d$ with its rational homology groups, we obtain a
contractad $H_{\bullet}(\D_d)$ in the category of graded vector spaces. 
For $d=1$, the space $\D_1(\Gr)$ is a disjoint union of contractible spaces and the homology contractad $H_{\bullet}(\D_d)$ is called an associative contractad (see~\cite{lyskov2023contractads} and Section~\ref{sec::polytopes} for details).
For $d\geq  2$, the homology contractad $H_{\bullet}(\D_d)$ is the quadratic Koszul contractad obtained from the contractads $\Com$ and $\Susp^{1-d}\Lie$ by rewriting rules
\begin{gather*}
    c_{d} \circ_{\{1,2\}}^{\Path_3} m =  m \circ_{\{2,3\}}^{\Path_3} c_{d},
    \\
    c_{d}\circ_{\{1,2\}}^{\K_3} m = m\circ_{\{2,3\}}^{\K_3} c_{d} + (m\circ_{\{1,2\}}^{\K_3}c_{d})^{(23)},
\end{gather*} where $m, c_d$ are the generators of $\Com$ and $\Susp^{1-d}\Lie$ respectively, and, moreover, we have the isomorphism 
\begin{equation}
\label{eq::Ed::graphical}
H_{\bullet}(\D_d)\cong \Com\circ\Susp^{1-d}\Lie
\end{equation}
of graphical collections~\cite{lyskov2023contractads}. Note that $m$ corresponds to the class of point, while $c_d$ corresponds to the fundamental class of sphere $S^{d-1}\simeq \Conf_{\Path_2}(\mathbb{R}^d)\cong \D_d(\Path_2)$. The Hopf structure is given on generators by the rule
\[
\triangle(m)=m\otimes m,\quad \triangle(c_d)=c_d\otimes m+ m\otimes c_d.
\] In the case $d=2$, the resulting contractad is called the Gerstenhaber contractad and is denoted by $\Gerst$ due to the operad notation~\cite{gerstenhaber1963cohomology}.

On the other hand, passing to rational cohomology, we obtain a Hopf cocontractad $\OS_d:=H^{\bullet}(\D_d)$.
I.e., the cocontractad in the category of commutative algebras over $\mathbb{Q}$.
Each component is the generalised Orlik-Solomon algebra $\OS_d(\Gr)$~\cite[Lemma~5.3.1]{lyskov2023contractads}. This algebra is generated by elements $\omega_e$ of degree $(d-1)$ for every edge $e$ in $\Gr$, satisfying the relations
\begin{equation}
\label{eq::OS::Gamma}
\begin{array}{c}
    \omega^2_{e}=0,
    \\
    \sum_{i=1}^n (-1)^{(d-1)(i-1)}\omega_{e_1}\omega_{e_2}...\hat{\omega}_{e_i}...\omega_{e_n}=0\text{, whenever edges } \{e_1,e_2,\cdots,e_n\} \subset E_{\Gr} \text{ assemble a cycle.}
\end{array}    
\end{equation} 
The cocontractad structure is given by homomorphisms of algebras:
\begin{equation}
\label{eq::OS::composition}
\begin{array}{c}
    \triangle_G^{\Gr}\colon\OS_d(\Gr) \rightarrow \OS_d(\Gr/G)\otimes\OS_d(\Gr|_G)
    \\
    \omega_{e}\mapsto \begin{cases}
    \omega_{e'} \otimes 1, \text{ if } e \not\subset G
    \\
    1\otimes \omega_{e}, \text{ if } e \subset G
    \\
    \end{cases},
\end{array}    
\end{equation} 
where $e'$ is the image of $e$ under contraction $\Gr \rightarrow \Gr/G$.

\subsection{Framed disks and Batalin-Vilkovski contractad}
\label{sec:sub::fE_d}
Similarly to the operad case~\cite{salvatore2003framed} (see also \cite{khoroshkin2013hypercommutative, bellier2014koszul}), for a topological group $G$ and a contractad $\Pop$ in $G$-spaces, we define the semidirect product contractad $\Pop\rtimes G$ by the formula:
\[
    \Pop \rtimes G(\Gr)=\Pop(\Gr)\times G^{\times V_{\Gr}}
\]
with the composition maps:
\[
    \gamma_{\Pop \rtimes G}\colon (\Pop \rtimes G)(\Gr/I) \times (\Pop \rtimes G)(\Gr|_{G_1})\times...\times (\Pop \rtimes G)(\Gr|_{G_k}) \rightarrow (\Pop \rtimes G)(\Gr)
\]
given by
\[
    \gamma_{\Pop \rtimes G}((\alpha,h);(\beta_1,g_1),...,(\beta_k,g_k))=(\gamma_{\Pop}(\alpha,h_{I_1}\beta_1,...,h_{I_k}\beta_k), h_{I_1}g_1,...,h_{I_k}g_k)
\]
where $h=(h_{1},...,h_{k}), g_j=(g_j^v)_{v \in G_j}$, and where $h_j$  acts on $g_{j}$ via diagonal action of $H$ on $H^{G_j}$. In the same way, we define the semiderict product contractad $\Orb\rtimes \Ho$ for a cocommutative Hopf algebra $\Ho$ and a contractad $\Orb$ in $\Ho$-modules. Note that, for a topological $G$-contractad, passing to homology, we obtain the isomorphism of contractads
\begin{equation}\label{eq::semidir_homology}
 H_{\bullet}(\Pop\rtimes G)\cong H_{\bullet}(\Pop)\rtimes H_{\bullet}(G),   
\end{equation} The action of the special orthogonal group $\SO(d)$ on the unit disk $\Disc^{d}$, induces the action of $\SO(d)$ on the little $d$-disks contractad $\D_d$.

\begin{definition}
    The framed little $d$-disks contractad $\fD_d$ is the semidirect product contractad $\D_d\rtimes \SO(d)$.
\end{definition}
\noindent We will be mostly interested in the $2$-dimensonal case $\fD_2=\D_2\rtimes \SO(2)$. 
\begin{definition}
    The Batalin-Vilkovski contractad $\BV$ is the homology contractad $H_{\bullet}(\fD_2)$.
\end{definition}
\noindent Thanks to isomorphism~\eqref{eq::semidir_homology}, we have the isomorphism of contractads $\BV=H_{\bullet}(\D_2\rtimes S^1)\cong H_{\bullet}(\D_2)\rtimes H_{\bullet}(S^1)$. The Hopf algebra $H_{\bullet}(S^1)$ is exactly Grassman algebra $\mathsf{k}[\Delta]$ with generator $\Delta$ of degree $1$ and quadratic relation $\Delta^2=0$, with coproduct map $\Delta \mapsto \Delta\otimes 1+1\otimes \Delta$. Note that the action of this algebra on a contractad is equivalent to a datum of a dg contractad with respect to the differential $\Delta$ of order $1$. The action of $\Delta$ on $H_{\bullet}(\D_2)\cong \Gerst$ is given on generators by the rule: $\Delta(m)=c_2, \Delta(c_2)=0$.
\begin{proposition} The contractad $\BV$ is obtained from the Gerstenhaber contractad $\Gerst$ and unary operation $\Delta$ of degree $1$, satisfying the relations:
\begin{gather}
\label{eq::delta:2}
    \Delta\circ\Delta=0
    \\
    \Delta \circ m - m \circ_{\{1\}}^{\Path_2} \Delta - m \circ_{\{2\}}^{\Path_2} \Delta = c_2
    \label{eq:deltam}
    \\    
    \Delta \circ c_2 + c_2 \circ_{\{1\}}^{\Path_2} \Delta + c_2 \circ_{\{2\}}^{\Path_2} \Delta=0
    \label{eq:deltab}
\end{gather}
\end{proposition}
\begin{proof}
As we have mentioned above, we have the  isomorphism of contractads
\begin{equation*}
    \BV\cong \Gerst\rtimes \mathsf{k}[\Delta].
\end{equation*} Hence, this contractad is generated by $m$, $b$, and $\Delta$. Next, we need to check the relations~\eqref{eq:deltam},\eqref{eq:deltab}.  By the definition of a semidirect product, we deduce the desired relations:
\begin{gather*}
    \Delta\circ m - m\circ_1\Delta -m\circ_2\Delta= \Delta(m)=c_2
    \\
    \Delta \circ c_2 + c_2 \circ_1 \Delta + c_2 \circ_2 \Delta= \Delta(c_2)=0.
\end{gather*} Therefore, there is a surjective map of contractads $\phi\colon\Pop\to \BV$, if when denote by $\Pop$ the contractad defined by the above mentioned presentation. Finally, we note that these relations \eqref{eq::delta:2}--\eqref{eq:deltab} define a rewriting rule according to~\cite[Sec.~1.3.3]{khoroshkin2024hilbert}. Therefore, there is an epimorphism $\BV=\Gerst\rtimes \mathsf{k}[\Delta]\cong \Gerst\circ\mathsf{k}[\Delta]\twoheadrightarrow \Pop$. So the map $\phi$ is an isomorphism, which concludes the proof.
\end{proof}

\section{Cohomology of graphical configurations and Lie contractad} 
\label{sec::Graphic::Lie}
There are several recent works~\cite{petersen2020cohomology, knudsen2023projection, knudsen2017betti, Gadish2021configuration} that describe the (co)homology of certain types of configuration spaces as the homology of appropriate Lie algebras. In this section, we formulate similar statements in the case of graphical configuration spaces, mimicking the strategy of Petersen~\cite{petersen2020cohomology}. In the graphical case, we need to replace Lie algebras with left modules over the Lie contractad $\Lie$.

\subsection{Twisted algebras over contractads}
\label{sec:sub:twisted}
Similarly to the operad case, we define twisted algebras over contractads as left modules  
\begin{definition}
A twisted $\Pop$-algebra is a graphical collection $\A$ endowed with a product map
\[
\gamma_{\A}\colon \Pop\circ\A\to \A
\] that is compatible with the contractad product map $\gamma_{\Pop}$. Equivalently, $\A$ is a left $\Pop$-module with respect to the contracted product $\circ$.
\end{definition} We omit the definitions and constructions of coalgebras over (co)contractads, free twisted (co)algebras and differential graded twisted (co)algebras.

\begin{example}[twisted $\Com$-algebras] For the commutative contractad $\Com$, a twisted algebra is equivalent to a datum of a graphical collection $\A$ with a collection of multiplications 
\[m\colon \A(\Gr|_{G_1})\otimes\A(\Gr|_{G_2})\to \A(\Gr)\] for each pair of graph $\Gr$ and its $2$-partition $\{G_1,G_2\}$. This multiplication is commutative $m(x_1,x_2)=m(x_2,x_1)$ and associative: for a $3$-partition $\{G_1,G_2,G_3\}$ of $\Gr$ with $G_1\cup G_2$ and $G_2\cup G_3$ are tubes, we have $m(x_1,m(x_2,x_3))=m(m(x_1,x_2),x_3)$. We consider the following examples of $\Com$-algebras
\begin{itemize}
    \item[(i)] For a space $X$, the inclusions $\Conf_{\Gr}(X)\to \Conf_{\Gr|_{G_1}}(X)\times \Conf_{\Gr|_{G_2}}(X)$ defines the structure of a twisted $\Com$-coalgebra on the configurations $\cup_{\Gr}\Conf_{\Gr}(X)$ and passing to cohomology, we obtain the twisted $\Com$-algebra structure on $\cup_{\Gr}H^{\bullet}(\Conf_{\Gr}(X))$.
    \item[(ii)] An usual commutative algebra $A$ defines a constant twisted $\Com$-algebra $\underline{A}$ with components $\underline{A}(\Gr):=A$ for every graph and the multiplication obtained from that of $A$.
    \item[(iii)] The free commutative algebra $\Com(\mathcal{V})$ generated by a graphical collection $\mathcal{V}$ has the components
    \[
    \Com(\mathcal{V})(\Gr)=\oplus_{I\vdash \Gr} \otimes_{G\in I} \mathcal{V}(\Gr|_G).
    \] with the obvious multiplication given by concatenation.
\end{itemize}
\end{example}
\begin{example}[twisted $\Lie$-algebras]\label{ex::twistedLie} Similarly, the twisted $\Lie$-algbera is a graphical collection $\mathfrak{g}$ with an antisymmetric product $b\colon \mathfrak{g}(\Gr|_{G_1})\otimes\mathfrak{g}(\Gr|_{G_2})\to \mathfrak{g}(\Gr)$, $b(x_2,x_1)=-b(x_1,x_2)$, satisfying the generalised Jacobi identity:
\[
\epsilon_{12}b(b(x_1,x_2),x_3)+\epsilon_{31}b(b(x_3,x_1),x_2)+\epsilon_{23}b(b(x_2,x_3),x_1)=0,
\] where $\epsilon_{ij}=1$ if $G_i\cup G_j$ is a tube, and $0$ otherwise. 

Similarly to the classical case, given a twisted $\Com$-algebra $\A$ and a twisted $\Lie$-algebra $\mathfrak{g}$, their Hadamard product $\A\underset{\mathrm{H}}{\otimes} \mathfrak{g}$ is again a twisted $\Lie$-algebra with the bracket 
$$[a_1\otimes x_1,a_2\otimes x_2] := a_1 a_2\otimes[x_1,x_2].$$
\end{example}

For a twisted $\Pop$-algebra $\A$, we define its suspension algebra $\mathsf{S}\A$, by the rule
\[
\mathsf{S}\A(\Gr)=\A(\Gr)[-n]\otimes\mathrm{sgn}_{\Gr}, \text{ where }n=|V_{\Gr}|,
\] with the algebra structure
\[
\gamma_{\mathsf{S}\A}\colon \Pop\circ(\mathsf{S}\A) \cong \mathsf{S}(\Pop\circ\A)\overset{\mathsf{S}(\gamma_{\A})}{\to}\mathsf{S}\A.
\]

Recall that a Koszul contractad is a quadratic contractad $\Pop$ whose Koszul complex $\Pop^{\cokoszul}\circ^{\kappa}\Pop \to \mathbb{1}$ is quasi-isomorphic to the unit $\mathbb{1}$, see~\cite[Sec.~3]{lyskov2023contractads} for details. For a Koszul contractad $\Pop$ and a twisted $\Pop$-algebra, we define a quasi-free twisted $\Pop^{\cokoszul}$-coalgebra $C^{\Pop}(\A)$ by the rule
\[
C^{\Pop}(\A):=(\Pop^{\cokoszul}\circ^{\kappa}\Pop)\circ_{\Pop}\A=\Pop^{\cokoszul}(\A).
\] Since $\Pop$ is Koszul, the homology of the free $\Pop$-algebra are
\[
H^{\Pop}(\Pop(\mathcal{V}))\cong \mathcal{V}
\]
\begin{example}[Chevalley-Eilenberg complex] Due to the applications, we replace homological gradings with cohomological ones. Recall that the Lie contractad $\Lie$ is Koszul and its Koszul dual is the commutative contractad $\Com$. For a twisted (cohomologically graded) dg $\Lie$-algebra $\mathfrak{g}^{\bullet}$ with a differential of degree $+1$, we define the Chevalley-Eilenberg complex as the shifted complex
\[
C^{\bullet}_{\mathrm{CE}}(\mathfrak{g}):=C^{\Lie}(\mathfrak{g})[1]=\Susp^{-1}\Com^{\vee}(\mathfrak{g})[1]\cong \Com^{\vee}(\mathfrak{g}[1])
\] By the construction, this complex forms a dg twisted $\Com$-coalgebra. The differential $d_{\mathrm{CE}}=\delta+d_{\mathfrak{g}}$ is the sum of two differentials: $d_{\mathfrak{g}}$ is induced from the differential of $\mathfrak{g}$ and $\delta$ is given by the formula
\[
\delta(s^{-1}x_{G_1}\otimes \cdots\otimes s^{-1}x_{G_k})=\sum \pm\epsilon_{ij} s^{-1}[x_{G_i},x_{G_j}]\otimes s^{-1}x_{G_1}\otimes\cdots \widehat{s^{-1}x_{G_i}}\cdots\widehat{s^{-1}x_{G_j}}\cdots\otimes s^{-1}x_{G_k},
\] where signs are determined by Koszul sign rules and $\epsilon_{ij}$ from Example~\ref{ex::twistedLie}. Since $\Lie$ is a Koszul contractad, we have the isomorphism
\[
H_{\mathrm{CE}}^{\bullet}(\Lie(\mathcal{V}))\cong \mathcal{V}[1].
\]
\end{example}

\subsection{Compactly supported sheaf cohomology of graphical configuration spaces} 
\label{sec:sub::Compact:support}
In this subsection, we generalize the results of~\cite[Sec.~8]{petersen2020cohomology} to the graphical case. Let $X$ be a paracompact and locally compact Hausdorff space, and $\F$ a bounded below complex of sheaves of $R$-modules on $X$, where $R$ is a ring of finite global dimension.

We consider the dg graphical collection $C^{\bullet}_c(\Conf(X); \F^{\boxtimes})$ with components $C^{\bullet}_c(\Conf(X); \F^{\boxtimes})(\Gr):=C^{\bullet}_c(\Conf_{\Gr}(X); \F^{\boxtimes V_{\Gr}})$ are chain complexes of compactly supported cochains on graphical configuration spaces. This graphical collection forms a twisted dg $\Com$-coalgebra with respect to the extension by zero
\[
C^{\bullet}_c(\Conf_{\Gr}(X); \F^{\boxtimes V_{\Gr}})\to C^{\bullet}_c(\Conf_{\Gr|_G}(X); \F^{\boxtimes V_G})\otimes C^{\bullet}_c(\Conf_{\Gr|_H}(X); \F^{\boxtimes V_H})
\] associated with the open inclusion $\Conf_{\Gr}(X)\subset \Conf_{\Gr|_G}(X)\times \Conf_{\Gr|_H}(X)$. We show how to compute the corresponding cohomology coalgebra $H^{\bullet}_c(\Conf(X); \F^{\boxtimes})$.

Recall that $\F$ is functorially quasi-isomorphic to a bounded below complex of flat and flabby sheaves~\cite[Lem.~3.4]{petersen2020cohomology}. We define the twisted $\Com$-algebra of derived compactly supported global section $\Real\Gr^{\otimes}(X;\F)$ by the rule
\[
\Real\Gr_c^{\otimes}(X;\F)(\Gr):=\Gr_c(X;\La^{\otimes V_{\Gr}}),
\] where $\La$ is a functorial flabby flat resolution of $\F$, with the algebra structure induced from 
$$\La^{\otimes V_G}\otimes\La^{\otimes V_H}\to \La^{\otimes V_{G\cup H}}.$$ This twisted algebra computes the cohomology of derived tensor products
\[
H_c^{\bullet}(\Real\Gr_c^{\otimes}(X;\F)(\Gr))\cong H_c^{\bullet}(X;\otimes_{v\in V_{\Gr}}^{\mathbb{L}}\F).
\] 
\begin{theorem}\label{thm::sheaves_lie}
Let $X$ be a paracompact and locally compact Hausdorff space, and $\F$ a bounded below complex of sheaves of $R$-modules on $X$, where $R$ is a ring of finite global dimension. Then we have an isomorphism
\[
H_{\mathrm{CE}}^{\bullet}(\Real\Gamma_c^{\otimes}(X,\F[-1])\underset{\mathrm{H}}{\otimes} \Lie)(\Gr)\cong H_c^{\bullet}(\Conf_{\Gr}(X),\F^{\boxtimes V_{\Gr}}),
\] where $\Lie$ is considered as a free twisted $\Lie$-algebra. Moreover, the isomorphism holds on the level of twisted $\Com$-coalgebras. 
\end{theorem}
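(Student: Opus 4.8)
The plan is to follow the sheaf-theoretic strategy of Petersen~\cite[Sec.~8]{petersen2020cohomology}, replacing the partition lattice $\Pi_n$ by the poset of graph partitions $\parti(\Gr)$ and the Lie operad by the Lie contractad $\Lie$. First I would fix, functorially in $\F$, a flabby flat resolution $\La$, so that $\Real\Gamma_c^{\otimes}(X;\F)(\Gr)$ is represented by the honest cochain complex $\Gamma_c(X;\La^{\otimes V_{\Gr}})$ and all tensor products are underived; this is the setup already introduced before the statement. Writing $j\colon \Conf_{\Gr}(X)\hookrightarrow X^{V_{\Gr}}$ for the open inclusion whose closed complement is the diagonal arrangement $D_{\Gr}=\bigcup_{e\in E_{\Gr}}\Delta_e$, extension by zero gives $H_c^{\bullet}(\Conf_{\Gr}(X);\F^{\boxtimes V_{\Gr}})\cong H_c^{\bullet}(X^{V_{\Gr}};\,j_!\,\La^{\boxtimes V_{\Gr}})$. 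The whole problem is thus to compute the derived compactly supported sections of $j_!\La^{\boxtimes V_{\Gr}}$ by a complex organised along $\parti(\Gr)$.

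The heart of the argument is the construction of a diagonal resolution of $j_!\La^{\boxtimes V_{\Gr}}$. For a partition $I\vdash\Gr$ the diagonal $\Delta_I\subset X^{V_{\Gr}}$ where the points in each block coincide is canonically identified with $X^{V_{\Gr/I}}$; let $\delta_I$ be its inclusion. I would assemble the complex of sheaves whose $I$-term is $(\delta_I)_*\La^{\boxtimes V_{\Gr/I}}$, twisted by the orientation line of the order complex of the open interval $(\hat 0,I)$ in $\parti(\Gr)$, with differential given by the poset boundary that merges two blocks $G_i,G_j$ whenever $G_i\cup G_j$ is again a tube. The claim is that this complex is a resolution of $j_!\La^{\boxtimes V_{\Gr}}$ up to the appropriate shift. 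Stalkwise at a point $x\in X^{V_{\Gr}}$ this reduces to the statement that the reduced homology of $\parti$ of the induced subgraph recording the coincidences of $x$ is concentrated in top degree and equals the corresponding component of $\Lie$ -- precisely the Koszulness of the contractad $\Lie$ (with Koszul dual $\Com$) recalled before the statement. This step is the main obstacle and the place where all the contractad input is spent.

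Next I would apply $\Gamma_c(X^{V_{\Gr}};-)$, which is exact on flabby sheaves and preserves the flabby flat class under the closed pushforwards $(\delta_I)_*$, to obtain a quasi-isomorphism from $\Gamma_c$ of the diagonal resolution onto the complex computing the right-hand side. The $I$-term becomes $\Gamma_c(X^{V_{\Gr/I}};\La^{\boxtimes})$ tensored with the top homology of $(\hat 0,I)$. Using the product decomposition $(\hat 0,I)\cong\prod_{G\in I}(\hat 0_G,\hat 1_G)$ together with Koszulness, the latter is $\bigotimes_{G\in I}\Lie(\Gr|_G)$ up to signs and shift, while $\Gamma_c(X^{V_{\Gr/I}};\La^{\boxtimes})=\bigotimes_{G\in I}\Gamma_c(X;\La^{\otimes V_G})$. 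Hence the $I$-term is exactly
\[
\bigotimes_{G\in I}\bigl(\Gamma_c(X;\La^{\otimes V_G})\otimes\Lie(\Gr|_G)\bigr)=\bigotimes_{G\in I}\bigl(\Real\Gamma_c^{\otimes}(X;\F[-1])\underset{\mathrm{H}}{\otimes}\Lie\bigr)(\Gr|_G),
\]
which is the $I$-summand of the underlying collection of the Chevalley--Eilenberg complex $C^{\bullet}_{\mathrm{CE}}(\mathfrak g)(\Gr)$ for $\mathfrak g=\Real\Gamma_c^{\otimes}(X;\F[-1])\underset{\mathrm{H}}{\otimes}\Lie$. I would then match the two differentials: the poset boundary that merges $G_i,G_j$ into the tube $G_i\cup G_j$ is precisely the CE bracket differential $\delta$ with its factor $\epsilon_{ij}$, and the internal differential of $\Gamma_c(X;\La^{\otimes})$ is $d_{\mathfrak g}$. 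The shifts $\F[-1]$ and $C^{\bullet}_{\mathrm{CE}}=C^{\Lie}[1]$ are arranged to cancel, so taking cohomology yields the asserted isomorphism.

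Finally, for the coalgebra refinement I would observe that both sides carry their twisted $\Com$-coalgebra structure from the same combinatorics: on the target from extension by zero along $\Conf_{\Gr}(X)\subset\Conf_{\Gr|_G}(X)\times\Conf_{\Gr|_H}(X)$, and on the source from the deconcatenation coproduct of the CE complex. Since the diagonal resolution is natural in $\Gr$ and its restriction to the subgraphs $\Gr|_G,\Gr|_H$ factors as the external product of the corresponding resolutions, the quasi-isomorphism constructed above intertwines the two coproducts, giving the isomorphism of twisted $\Com$-coalgebras. Beyond the stalkwise acyclicity, the only remaining delicate point I anticipate is the sign and orientation bookkeeping needed to identify the poset boundary with $\delta$ on the nose and to reconcile the shift conventions.
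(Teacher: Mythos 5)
Your argument is correct in outline, but it is organised differently from the paper's proof, so it is worth recording the comparison. The paper does not build a resolution indexed by single partitions twisted by top homology of intervals; instead it invokes the general decomposition of $j_!j^*\F^{\boxtimes V_{\Gr}}$ associated with the locally closed stratification $X(I)$ into a direct sum over \emph{chains} in $\parti(\Gr)$ (citing \cite{petersen2017spectral}), which yields the larger double complex $\mathsf{CF}(\Gr,\A)$ built from the full order complex $\tilde C(\parti(\Gr))$; only afterwards, in a separate lemma, is this collapsed onto the Chevalley--Eilenberg complex via the quasi-isomorphisms $\Susp^{-1}\Lie^{\vee}=\Com\hookrightarrow \mathsf{B}(\Com)\hookrightarrow \tilde C(\parti)$ from \cite[Thm~3.2.1]{lyskov2023contractads} together with the Koszulness of $\Com$. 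Your route replaces the chain-indexed complex by its Koszul-dual ``small'' model from the outset: terms indexed by single partitions $I$, twisted by $\tilde H_{\mathrm{top}}((\hat 0,I))\cong\bigotimes_{G\in I}\Lie(\Gr|_G)$, with the bracket differential. The price is that you must prove stalkwise acyclicity of the augmented complex yourself; this is exactly the acyclicity of the Koszul complex of the contractad $\Lie$ evaluated on the blocks of the coincidence partition $I(x)$ of a point (using that the tube partitions compatible with $x$ form the full lower interval $[\hat 0,I(x)]$, which factors as a product over its blocks), so the same Koszulness input is spent, only at a different place. What the paper's version buys is that the chain decomposition is automatic and needs no concentration of poset homology in top degree up front; what yours buys is a smaller complex and a more direct match with $C^{\bullet}_{\mathrm{CE}}$, at the cost of the sign and orientation bookkeeping you flag and of checking that the $I$-terms stay $\Gamma_c$-acyclic after pushforward along the closed diagonals. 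The coalgebra compatibility is handled the same way in both arguments, by naturality under restriction to induced subgraphs.
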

\begin{proof} The proof repeats the ones of~\cite[Cor.~8.6]{petersen2020cohomology}. Consider the lattice of graph partitions $\parti(\Gr)$ with respect to refinement order. For a twisted dg $\Com$-algebra $\A^{\bullet}$ over $R$, we define the functor $\Phi_A\colon \parti(\Gr)\to \mathrm{Ch}_R$, by the rule $\Phi_{\A}(I):=\otimes_{G\in I}\A(\Gr|_G)$. Respectively, for a partition $I'$ obtained from $I$ by merging some blocks, i.e. $I'\geq I$, we consider a chain map $\Phi_A(I')\to \Phi_A(I)$ induced from algebra structure $\A(\Gr|_{G_i})\otimes \A(\Gr|_{G_i})\to \A(\Gr|_{G_i\cup G_j})$. We consider a double complex $\mathsf{CF}(\Gr,\A)$ by
\[
\mathsf{CF}(\Gr,\A)=\bigoplus_{\lambda\subseteq\parti(\Gr)-\text{chain} } \Phi_A(\max \lambda),
\] with the "vertical" differential induced from the differential in $\Phi_A(I)$, and the "horizontal" differential is an alternating sum over all possible ways of adding an extra element to the chain.

From the general sheaf theory there is a quasi-isomorphism $$C^{\bullet}_c(\Conf_{\Gr}(X);\F^{\boxtimes V_{\Gr}})\cong \Real\Gr_c(X^{V_{\Gr}}; j_!j^*\F^{\boxtimes V_{\Gr}}),$$ 
where $j\colon \Conf_{\Gr}(X)\to X^{V_{\Gr}}$. For a graph partition $I$ of $\Gr$, we consider a subset 
$$X(I):=\{(x_v)\in X^{V_{\Gr}}|\text{ for }(v,w)\in E_{\Gr}: x_v=x_w \Leftrightarrow x_v \text{ and } x_w\text{ in the same block of }I\}.$$ This subsets defines a locally open stratification of $X^{V_{\Gr}}$ with only open strata $X(\{\{v\}\}_{v\in V_{\Gr}})=\Conf_{\Gr}(X)$ and the poset of this stratification is a lattice of graph partitions with respect to refinement order. Thanks to~\cite{petersen2017spectral}, we have the decomposition
\[
j_!j^*\F^{\boxtimes V_{\Gr}}\cong \bigoplus_{\lambda\subseteq\parti(\Gr)-\text{chain} }(\iota_{\max(\lambda)})_*(\iota_{\max(\lambda)})^*\F^{\boxtimes V_{\Gr}},
\] where $\iota_I\colon X(I)\to X^{V_{\Gr}}$. If $\La$ is the flat and flabby resolution of $\F$, then, thanks to decomposition above, we have a quasi-isomorphism $j_!j^*\F^{\boxtimes V_{\Gr}}\simeq \bigoplus_{\lambda\subseteq\parti(\Gr)-\text{chain} }(\iota_{\max(\lambda)})_*(\iota_{\max(\lambda)})^*\La^{\boxtimes V_{\Gr}}$. The later sheaf is the complex of soft sheaves, so this resolution can be used to compute the cohomology of $j_!j^*\F$. Moreover, this complex of sheaves is isomorphic to the double complex $\mathsf{CF}(\Gr,\Real\Gr^{\otimes}(X,\F))$. All in all, we obtain the quasi-isomorphism of complexes
\[
C^{\bullet}_c(\Conf_{\Gr}(X);\F^{\boxtimes V_{\Gr}})\simeq \mathsf{CF}(\Gr,\Real\Gr_c^{\otimes}(X,\F))
\] To complete the proof, we need the following lemma
\begin{lemma}
For a twisted dg $\Com$-algebra $\A$, we have a quasi-isomorphism of complexes
\[
\mathsf{CF}(\Gr,\A)\simeq C_{\mathrm{CE}}^{\bullet}(\mathsf{S}\A\underset{\mathrm{H}}{\otimes} \Lie)(\Gr).
\]
\end{lemma}
\begin{proof} We give a sketch of the proof since it completely repeats the one of~\cite[Thm~8.4]{petersen2020cohomology}. For a graph $\Gr$, let $\tilde C(\parti(\Gr))$ be the be the reduced order complexes of  $\parti(\Gr)$ (the augmented complex of chains
with minimal and maximal elements). Thanks to~\cite[Thm~3.2.1]{lyskov2023contractads}, we have the quasi-isomorphism $\mathsf{B}(\Com) \overset{\simeq}\hookrightarrow \tilde C(\parti)$ ( with respect to certain homological regrading of $\tilde C(\parti(\Gr))$), where $\mathsf{B}(\Com)$ is the Bar construction of the commutative cocontractad, see~\cite[Sec.~3]{lyskov2023contractads} for details. Since $\Com$ is a Koszul contractad with Koszul dual $\Lie$, we have the quasi-isomorphism $\Com=\Susp^{-1}\Lie^{\vee}\overset{\simeq}{\hookrightarrow} \mathsf{B}(\Com)$.

By the construction, the dg graphical collection $\mathsf{CF}(-,\A)$ has the form $\mathsf{CF}(-,\A)=\Com\circ (\A\underset{\mathrm{H}}{\otimes} \tilde C(\parti)^{\vee})$, where $\tilde C(\parti)^{\vee}$ is the linear dual dg graphical collection ( with cohomological grading). Comparing differentials and combining quasi-isomorphisms, we obtain a quasi-isomorphism
\begin{equation*}
\centering
\Com\circ (\A\underset{\mathrm{H}}{\otimes} \tilde C(\parti)^{\vee})\overset{\simeq}{\twoheadrightarrow} \Com\circ (\A\underset{\mathrm{H}}{\otimes} \mathsf{B}(\Com)^{\vee})\overset{\simeq}{\twoheadrightarrow} \Com\circ (\A\underset{\mathrm{H}}{\otimes} \Susp^{-1}\Lie)= C_{\mathrm{CE}}^{\bullet}(\mathsf{S}\A\underset{\mathrm{H}}{\otimes} \Lie)(\Gr),
\end{equation*} where the all dg graphical collections are considered with cohomological gradings instead of homological ones.
\end{proof} All in all, we have
\[
C^{\bullet}_c(\Conf_{\Gr}(X);\F^{\boxtimes V_{\Gr}})\simeq C_{\mathrm{CE}}^{\bullet}(\mathsf{S}\Real\Gr^{\otimes}(X,\F)\otimes \Lie)(\Gr)=C_{\mathrm{CE}}^{\bullet}(\Real\Gr^{\otimes}(X,\F[-1])\otimes \Lie)(\Gr)
\]
\end{proof} 

\subsection{Applications}
\label{sec:sub:application::twisted}
So, we are ready to state the main results of this section 
\begin{theorem}\label{thm::cohomology_twisted_lie} Let $X$ be a paracompact and locally compact Hausdorff space, then we have an isomorphism of twisted $\Com$-coalgebras
\[
H_{\mathrm{CE}}^{\bullet}(\underline{\A^c_X}\underset{\mathrm{H}}{\otimes} \mathsf{S}\Lie)\cong H_c^{\bullet}(\Conf(X);\mathbb{Q}),
\] where $\A^c_X$ is a cdga model for the compactly supported cochains on $X$.
\end{theorem}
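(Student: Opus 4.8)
The plan is to deduce this statement as the rational, constant-coefficient specialization of the sheaf-theoretic Theorem~\ref{thm::sheaves_lie}. First I would take $R=\mathbb{Q}$ and let $\F=\mathbb{Q}_X$ be the constant sheaf. On the right-hand side the external box product $\F^{\boxtimes V_{\Gr}}$ restricts to the constant sheaf on $\Conf_{\Gr}(X)$, so that $H_c^{\bullet}(\Conf_{\Gr}(X);\F^{\boxtimes V_{\Gr}})=H_c^{\bullet}(\Conf_{\Gr}(X);\mathbb{Q})$; assembling over all graphs, the target twisted $\Com$-coalgebra is exactly $H_c^{\bullet}(\Conf(X);\mathbb{Q})$, with its coalgebra structure already accounted for by Theorem~\ref{thm::sheaves_lie}.

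The substance lies in rewriting the input algebra. I would identify the derived compactly supported global sections $\Real\Gamma_c^{\otimes}(X;\mathbb{Q}_X)$ with the constant twisted $\Com$-algebra $\underline{\A^c_X}$. The point is that over the field $\mathbb{Q}$ the derived tensor product $\otimes^{\mathbb{L}}_{v\in V_{\Gr}}\mathbb{Q}_X$ collapses to $\mathbb{Q}_X$ (no higher Tor), so each component $\Real\Gamma_c^{\otimes}(X;\mathbb{Q}_X)(\Gr)$ computes $H_c^{\bullet}(X;\mathbb{Q})$ independently of $\Gr$, while the structure maps $\La^{\otimes V_G}\otimes\La^{\otimes V_H}\to\La^{\otimes V_{G\cup H}}$ become, after passing to a cdga model, the internal cup product of $\A^c_X$; this is precisely the constant twisted $\Com$-algebra $\underline{\A^c_X}$. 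Here the ``good space'' hypothesis enters exactly to provide a genuine rational cdga model $\A^c_X$ together with a multiplicative comparison quasi-isomorphism $\Real\Gamma_c^{\otimes}(X;\mathbb{Q}_X)\simeq\underline{\A^c_X}$.

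It then remains to reconcile the shift and suspension bookkeeping. The proof of Theorem~\ref{thm::sheaves_lie} already supplies the identification $\Real\Gamma_c^{\otimes}(X;\F[-1])\underset{\mathrm{H}}{\otimes}\Lie\cong\mathsf{S}\Real\Gamma_c^{\otimes}(X;\F)\underset{\mathrm{H}}{\otimes}\Lie$, the odd shift $[-1]$ contributing exactly the Koszul sign character $\mathrm{sgn}_{\Gr}$ that turns the shifted product into the algebra suspension. I would then move the suspension across the Hadamard product via the graphical-collection identity $\mathsf{S}\A\underset{\mathrm{H}}{\otimes}\Lie\cong\A\underset{\mathrm{H}}{\otimes}\mathsf{S}\Lie$, valid since both sides equal $\A(\Gr)\otimes\Lie(\Gr)[-|V_{\Gr}|]\otimes\mathrm{sgn}_{\Gr}$ up to Koszul sign and reordering, compatibly with the bracket and the Chevalley--Eilenberg differential. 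Combining with the previous step gives $\Real\Gamma_c^{\otimes}(X;\mathbb{Q}_X[-1])\underset{\mathrm{H}}{\otimes}\Lie\simeq\underline{\A^c_X}\underset{\mathrm{H}}{\otimes}\mathsf{S}\Lie$ as twisted $\Lie$-algebras.

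Finally I would apply $C_{\mathrm{CE}}^{\bullet}(-)$ to this quasi-isomorphism. Since $\Lie$ is Koszul with Koszul dual $\Com$, the Chevalley--Eilenberg functor is a (co)bar-type construction that carries quasi-isomorphisms of twisted $\Lie$-algebras to quasi-isomorphisms of twisted $\Com$-coalgebras; combined with Theorem~\ref{thm::sheaves_lie} this yields the asserted isomorphism $H_{\mathrm{CE}}^{\bullet}(\underline{\A^c_X}\underset{\mathrm{H}}{\otimes}\mathsf{S}\Lie)\cong H_c^{\bullet}(\Conf(X);\mathbb{Q})$ of twisted $\Com$-coalgebras. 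I expect the main obstacle to be the second step: verifying that $\Real\Gamma_c^{\otimes}(X;\mathbb{Q}_X)\simeq\underline{\A^c_X}$ is genuinely a quasi-isomorphism \emph{of twisted $\Com$-algebras} and not merely a component-wise equivalence, which is where the use of a strictly commutative cdga model (rather than an $E_\infty$ model) and the good-space hypothesis do the real work.
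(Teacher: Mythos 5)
Your proposal is correct and follows essentially the same route as the paper: the paper's proof is a one-line specialization of Theorem~\ref{thm::sheaves_lie} to the constant sheaf $\mathbb{Q}$, citing Petersen's result that $\Real\Gamma_c^{\otimes}(X;\mathbb{Q})$ is equivalent as a twisted $\Com$-algebra to the constant algebra $\underline{\A^c_X}$ — precisely the step you correctly identify as carrying the real content. Your additional bookkeeping moving the suspension from the algebra side ($\F[-1]$) to $\mathsf{S}\Lie$ matches the identification already made at the end of the paper's proof of Theorem~\ref{thm::sheaves_lie}.
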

\begin{proof} For the constant sheaf $\mathbb{Q}$, the twisted $\Com$-algebra $\Real\Gr_c^{\otimes}(X;\mathbb{Q})$ is equivalent to a constant twisted $\Com$-algebra $\underline{\A_X^c}$ associated with a model $\A_X^c$ for compactly supported cochains $C_c^{\bullet}(X)$~\cite[Cor~3.13]{petersen2020cohomology}.
\end{proof} 

In the special case $X=\Real^d$, we have
\begin{corollary}
The compactly supported cohomology of the graphical configuration spaces $\Conf_{\Gr}(\Real^d)$ forms a free twisted $\Com$-coalgebra
\[
H_c^{\bullet}(\Conf(\Real^d);\mathbb{Q})\cong \Com^{\vee}(\mathsf{S}\Lie[1-d]).
\]
\end{corollary}
\begin{proof} For $\Real^d$, its cdga model is the one-dimensional algebra $\A^c_{\Real^d}=\mathbb{Q}\langle x^d\rangle$ with zero multiplication, so the differential in $C_{\mathrm{CE}}^{\bullet}(\underline{\A^c_{\Real^d}}\underset{\mathrm{H}}{\otimes} \mathsf{S}\Lie)=\Com^{\vee}(\mathsf{S}\Lie[1-d])$ is trivial.
\end{proof} For the first non-trivial example, the wedge of $g$ copies of circles $\bigvee^g S^1$, the complete description of cohomology $H_c^{\bullet}(\Conf(\bigvee^g S^1))$ is a complicated task even in the case of ordinary configuration spaces; see~\cite{gadish2022configuration} for details. But for particular families of graphs, we can explicitly compute the twisted Chevalley-Eilenberg homology from Theorem~\ref{thm::cohomology_twisted_lie}.
\begin{example}[Paths and wedge of spheres]  Let us restrict ourselves to the family of paths $\Path_n$ and the corresponding configuration spaces
\[
\Conf_{\Path_n}(X)=\{(x_1,\cdots,x_n)\in X^{\times n}| x_i\neq x_{i+1} \text{ for }i=1,\cdots,n-1\}
\] Recall that, for the path $\Path_n$, its partition poset coincides with the poset of partitions of $[n]$ into intervals: $\parti(\Path_n)=\Pi^{\mathrm{ord}}_n$. Also, for the paths, the components of $\Lie$ are one-dimensional. So, for a cdga algebra $\A^{\bullet}$, the complex $C_{\mathrm{CE}}^{\bullet}(\underline{\A}\underset{\mathrm{H}}{\otimes} \mathsf{S}\Lie)(\Path_n)$ has the form
\[
C_{\mathrm{CE}}^{\bullet}(\underline{\A}\underset{\mathrm{H}}{\otimes} \mathsf{S}\Lie)(\Path_n)=\bigoplus_{I_1\oplus I_2\oplus \cdots\oplus I_k=[n]} \A^{\otimes k}[k-n] 
\] with the vertical differential coming from $A$ and the horizontal differential given by
\[
d(x_{I_1}\otimes x_{I_2}\otimes\cdots\otimes x_{I_k})=\sum (-1)^{j-1} x_{I_1}\otimes\cdots \otimes (x_{I_j}\cdot x_{I_{j+1}})_{I_j\oplus I_{j+1} }\otimes\cdots \otimes x_{I_k}
\]
Consider the wedge of $d$-spheres $\bigvee^g S^d$. Since $\bigvee^g S^d$ is formal and compact, we can take $\A^c_{\bigvee^g S^d}=H^{\bullet}(\bigvee^g S^d)=\mathbb{Q}\oplus \mathbb{Q}\langle x_1,x_2,\cdots,x_g\rangle$, where $x_i$ are of degree $d$ with zero multiplication.

We denote by $C(g,n)$ the complex $C_{\mathrm{CE}}^{\bullet}(H^{\bullet}(\bigvee^g_{i=1} S^1)\underset{\mathrm{H}}{\otimes} \mathsf{S}\Lie)$. For a multi-index $\alpha\in [g]^{k}$, let $C^{\alpha}(g,n)$ be the subcomplex generated by monomials that contain factors $x_{\alpha_1},\cdots,x_{\alpha_k}$ from left to right. We have the decomposition $C(g,n)=\bigoplus^n_{k=0} \oplus_{|\alpha|=k} C^{\alpha}(g,n)$ and the complex $C^{\alpha}(g,n)$ is isomorphic to $C^{(1^k)}(1;n)$. Note that we have $C_{\mathrm{CE}}^{\bullet}(H^{\bullet}(S^d)\underset{\mathrm{H}}{\otimes} \mathsf{S}\Lie)=\oplus^n_{k=1}C^{(1^k)}(1;n)$.

The projection $\Conf_{\Path_n}(S^d)\to \Conf_{\Path_{n-1}}(S^d)$ associated with the inclusion $\Path_{n-1}\subset \Path_n$ is a fibration with the fiber $S^d\setminus \mathrm{pt}\cong \Real^d$. Hence, we have a homotopy equivalence $\Conf_{\Path_n}(S^d)\simeq S^d$, so, by Poincare duality for c.s. cohomology, the non-zero cohomology $H_c^{\bullet}(\Conf_{\Path_n}(S^d))$ are concentrated in degrees $dn$ and $d(n-1)$ and are one-dimensional. In particular, the homology of $C^{(1^k)}(1;n)$ is non-zero only in the case $k=d(n-1),dn$ and in this case the homology is one-dimensional and concentrated in the degree $dk$. All in all, we have
\[
H_c^{i}(\Conf_{\Path_n}(\bigvee^g_{i=1} S^d))=\begin{cases}
    \mathbb{Q}^{\oplus g^n},\text{ if } i=dn
    \\
    \mathbb{Q}^{\oplus g^{n-1}},\text{ if } i=d(n-1)
    \\
    0, \text{ otherwise }
\end{cases}
\]
\end{example}
Recall that the \emph{chromatic polynomial of a graph} computes the number of ``proper'' colorings of vertices of $\Gamma$:
\begin{equation}
\label{eq::chromatic::coloring}
\mathfrak{X}_{\Gr}(q):=\#\left\{
\right.
$$
extends to a well-defined map of Lie algebras
$$
\mathfrak{t}_d(\Gamma/G) \oplus \mathfrak{t}_d(\Gamma|_G) \longrightarrow \mathfrak{t}_d(\Gamma).
$$
The collection $\{\mathfrak{t}_d(\Gamma) \mid \Gamma \text{ chordal}\}$ assembles into a contractad in the category of Lie algebras, which we call the Drinfeld–Kohno Lie contractad $\mathfrak{t}_d$.
This contractad provides a Lie–algebraic model for the little discs contractad $\mathcal{D}_d$ for $d\geq 2$ and for chordal graphs.
\end{proposition}

\subsection{Formality in the Chordal Case}
\label{sec:sub:chordal:formality}
In this section, all constructions---including semi-algebraic sets and dg (co)algebras---are defined over the field of real numbers $\Real$.

We follow the proof of the formality of the little disks operads suggested in~\cite{kontsevich1999operads}. The proof relies on the construction of the dg Hopf cooperad of graphs $\mathsf{Graphs}_d$ and a chain of quasi-isomorphisms of cooperads:
\begin{equation}
\label{eq::formality::Kontsevich}
\begin{tikzcd}
H^{\bullet}(\FM_d) \arrow[r,equal] & \OS_d &
\mathsf{Graphs}_d \arrow[l,"\mathbb{I}"'] \arrow[r,"\hat{\mathbb{I}}"] &
\Omega_{PA}^{\bullet}(\FM_d).
\end{tikzcd}
\end{equation}
Here, $\OS_d$ denotes the Hopf cooperad of Orlik-Solomon algebras (see~\eqref{eq::OS::composition}) and $\Omega_{PA}$ is the functor of piecewise algebraic forms which is applicable to semi-algebraic sets, specifically $\FM_d$. We refer to~\cite{hardt2011real} for the general theory of semi-algebraic sets and the $\Omega_{PA}$ functor, and to~\cite{lambrechts2014formality} for a comprehensive treatment of the formality of the little disks operad in all dimensions.

It is important to note that the functor of piecewise algebraic forms $\Omega_{PA}$ is not comonoidal in the strict sense. However, via the K\"unneth quasi-isomorphism $\Omega_{PA}(X) \otimes \Omega_{PA}(Y) \overset{\simeq}{\to} \Omega_{PA}(X \times Y)$, the cocomposition morphism $\circ^{\Gr}_G$ in the topological contractad/operad $\Orb$ can be represented by the following diagram of dg algebras, which defines the notion of a \emph{weak contractad/operad}:
\begin{equation}
\label{eq::weak::contractad}
\begin{tikzcd}
\Omega_{PA}(\Orb(\Gr)) \arrow[r, "{(\circ^{\Gr}_G)^*}"] & \Omega_{PA}(\Orb(\Gr/G)\times \Orb(\Gr|_G)) & \arrow[l,"\text{quis}"'] \Omega_{PA}(\Orb(\Gr/G))\otimes \Omega_{PA}(\Orb(\Gr|_G)).
\end{tikzcd}
\end{equation}
We refer to~\cite[Sec~3]{lambrechts2014formality} for the precise definition of weak operads and the corresponding formulation of formality~\eqref{eq::formality::Kontsevich}. Furthermore, several works~\cite{fresse2017homotopy,fresse2018extended} provide methods for replacing these weak structures with strict ones.

The main goal of this section is to show that the objects involved in the formality chain~\eqref{eq::formality::Kontsevich} can be extended to the context of \textbf{chordal graphs}. To achieve this, we adapt the following notions from the operadic setting to the case of contractads:
\begin{itemize}[itemsep=0pt, topsep=0pt]
\item The Kontsevich cooperad $\Graphs_d$ (see~\cite[Sec.~6]{lambrechts2014formality});
\item The projection $\mathbb{I}$ onto the cohomology;
\item The fibered integral map $\hat{\mathbb{I}}$ to the space of piecewise algebraic forms (\cite[Sec.~4]{lambrechts2014formality}).
\end{itemize}

\subsubsection{The Contractads $\Graphs_d$ and $\mathsf{ICG}_d$}
We begin by briefly recalling the construction of the Kontsevich cooperad $\Graphs_d$ (see~\cite[Sec.~6]{lambrechts2014formality}). For a set $V$, an \emph{admissible diagram} $D$ is a finite undirected graph with two types of vertices: external vertices (labeled by $V$) and internal vertices (unlabeled). These must satisfy the following: each internal vertex has valence at least $3$, and each connected component contains at least one external vertex. Below is an example for $V=[4]$:
\[
\begin{tikzpicture}
\node[ext] (v1) at (0,0) {$\scriptscriptstyle 1$};
\node[ext] (v2) at (0.5,0) {$\scriptscriptstyle 2$};
\node[ext] (v3) at (1,0) {$\scriptscriptstyle 3$};
\node[ext] (v4) at (1.5,0) {$\scriptscriptstyle 4$};
\node[int] (w1) at (0.5,.7) {};
\node[int] (w2) at (1.0,.7) {};
\draw (v1) edge (v2) edge (w1) (v2) edge (w1) edge (w2) (v3) edge (w1) edge (w2) (v4) edge (w2) (w1) edge (w2);
\end{tikzpicture}
\]
Internal vertices carry cohomological degree $-d$, while edges carry degree $d-1$. We further consider orientations and orderings of edges and internal vertices. Let $\Graphs_d(V)$ be the linear span of admissible diagrams modulo the following symmetry relations:
\[
\begin{array}{cl}
\D=(-1)^d \D' & \text{if } \D' \text{ is obtained from } D \text{ by inverting an edge or transposing internal vertices;} \\
\D=(-1)^{d+1} \D' & \text{if } \D' \text{ is obtained from } \D \text{ by transposing the edge ordering.}
\end{array}
\]
For even $d$, diagrams with multiple edges vanish; for odd $d$, diagrams with loops vanish.

The collection $\Graphs_d(m)_{m\geq 0}$ forms a dg Hopf cooperad. The differential is given by edge contraction, and the cocomposition is defined combinatorially by restriction and contraction:
\[
\begin{array}{c}
\circ^{*} : \Graphs_d(V\sqcup W) \longrightarrow \Graphs_d(V\sqcup\{*\}) \otimes \Graphs_d(W) \\
\circ^{*}(\D):= \sum_{\D'\subset \D} \D/\D' \otimes \D'|_{W},
\end{array}
\]
where the sum is over connected subdiagrams $\D'\subset \D$ whose external vertices belong to $W$. Multiplication is given by the union of graphs at external vertices. Each component $\Graphs_d(m)$ is a quasi-free cdga generated by the subspace of internally connected diagrams (graphs that remain connected if external vertices are removed). We have an isomorphism:
$$\Graphs_d(m)\cong C_{CE}^{\bullet}(\mathsf{ICG}_d(m))$$ 
between the dgca of graphs and the Chevalley--Eilenberg complex of the $L_{\infty}$-algebra of internally connected diagrams $\mathsf{ICG}_d$.\footnote{In the literature, such diagrams are referred to as graphs, which explains the letter “G” in the abbreviation ICG. We retain this abbreviation, but we do not refer to the elements as "graphs" so as not to confuse them with graphs used for the contractad structure.}

\begin{definition}
\label{def::Graphs::Gamma}
For a graph $\Gr$, we define $\Graphs_d(\Gr) \subset \Graphs_d(V_{\Gr})$ as the subspace generated by admissible diagrams $D$ satisfying the following property: 
\begin{equation}
\label{eq::graphs::contractad::condition}
\begin{varwidth}{0.85\linewidth}
If $D$ contains a path $v - u_1- \cdots - u_k - w$ between external vertices $v, w \in V_{\Gr}$ through internal vertices $u_i$, then $v$ and $w$ must be adjacent in $\Gr$.
\end{varwidth}
\end{equation}
In particular, two external vertices can be adjacent in the diagram only if they are adjacent in the underlying graph $\Gr$.
\end{definition}

\begin{lemma}
The subspace $\Graphs_d(\Gr)$ is closed under the differential and forms a quasi-free subalgebra generated by internally connected diagrams. The corresponding $L_{\infty}$-algebra, denoted $\mathsf{ICG}_d(\Gr)$, is a quotient of $\mathsf{ICG}_d(V_{\Gr})$:
\[
\imath_{\Gr}:\Graphs_d(\Gr) \hookrightarrow \Graphs_d(V_{\Gr}) \quad \text{and} \quad \pi_{\Gamma}: \mathsf{ICG}_d(V_{\Gr}) \twoheadrightarrow \mathsf{ICG}_d(\Gr).
\]
\end{lemma}
\begin{proof}
Direct verification on the level of graphs.
\end{proof}

\begin{lemma}
The collection $\Graphs_d(\Gr)$ forms a Hopf dg-cocontractad, with cocomposition obtained by restricting the cocomposition of the cooperad $\Graphs_d$. Correspondingly, the collection $\mathsf{ICG}_{d}(\Gr)$ forms a contractad of $L_{\infty}$-algebras.
\end{lemma}
\begin{proof}
Let $\D \in \Graphs_d(\Gr)$ satisfy Assumption~\eqref{eq::graphs::contractad::condition}. This property is inherited by any subdiagram $\D' \subset \D$. Thus, if $D' \subset D$ is a subdiagram whose external vertices belong to a tube $G$, then $\D'|_{G} \in \Graphs_d(\Gr|_{G})$. Similarly, the contracted diagram $\D/\D'$ satisfies the assumption for the contracted graph $\Gr/G$. It follows that the following diagram commutes:
\begin{equation}
\label{eq::diagram::graphs::composition}
\begin{tikzcd}
    \Graphs_d(\Gr) \ar[rr,"(\circ_{G}^{\Gamma})^{*}"] \arrow[d,hook,"\imath_{\Gamma}"] & & \Graphs_d(\Gr/G) \otimes \Graphs_d(\Gr|_G) \arrow[d,hook,"\imath_{\Gamma/G}\otimes\imath_{\Gamma|_G}"] \\
    \Graphs_d(V_{\Gr}) \ar[rr,"\circ^{*}"] & & \Graphs_d(V_{\Gr/G}) \otimes \Graphs_d(V_{\Gr|_G}).
\end{tikzcd}
\end{equation}
The coassociativity of the cocomposition in $\Graphs_d(\Gamma)$ follows from that of the cooperad $\Graphs_d$. The contractad structure on $\mathsf{ICG}_d(\Gamma)$ arises from the Koszul duality between $\Graphs_d$ and $\mathsf{ICG}_d$.
\end{proof}

\begin{lemma}
\label{lem::ICG::subgraph}
For an internally connected diagram $\D \in \mathsf{ICG}_d(\Gamma)$, the set of external vertices $G_{\D} \subseteq V_{\Gamma}$ that are the ends of the edges in $\D$ forms a clique, i.e., the induced subgraph $\Gamma|_{G_{\D}}$ is complete.
\end{lemma}
\begin{proof}
Let $v, w \in G_{\D}$ be any two external vertices connected to $\D$. Since $\D$ is internally connected, there exists a path in $\D$ between $v$ and $w$ such that all intermediate vertices are internal. 
By Definition~\ref{def::Graphs::Gamma}, this implies that $v$ and $w$ must be adjacent in $\Gamma$. As this condition holds for every pair of vertices in $G_{D}$, the induced subgraph $\Gamma|_{G_{D}}$ is complete.
\end{proof}

\subsubsection{Cohomology of $\Graphs_d$}
Consider the map of algebras:
\begin{equation}
\label{eq::Graphs=OS}
\mathbb{I}\colon \Graphs_d(\Gr) \longrightarrow \OS_d(\Gr) 
\end{equation}
defined on generators $\D \in \mathsf{ICG}_d$ by the following rule:
$$
\begin{cases}
\mathbb{I}(D)= 0, & \text{if $D$ contains an internal vertex,} \\
\mathbb{I}\left(
\begin{tikzpicture}[scale=0.5, baseline=-0ex]
	\node[ext] (v0) at (-0.5,0) {$\scriptstyle v$};
	\node[ext] (v1) at (0.5,0) {$\scriptstyle w$};
	\draw (v0) edge (v1);
\end{tikzpicture}
\right) := \omega_{vw}. & 
\end{cases}
$$
Kontsevich proved that for complete graphs $\K_n$, the map $\mathbb{I}$ is a quasi-isomorphism of cooperads. In Theorem~\ref{prop::Graphs_to_OS}, we extend this result to the case of chordal graphs. Note that for non-chordal graphs, $\mathbb{I}$ is not a quasi-isomorphism.

The proof of Theorem~\ref{prop::Graphs_to_OS} follows the strategy of~\cite[Thm.~8.1]{lambrechts2014formality}, using an induction argument based on a perfect elimination ordering of the vertices and maybe easily explained via the description of the normal forms (NBC-monomials) for Orlik--Solomon algebras $\OS_d(\Gamma)$. To avoid an explicit discussion of NBC-monomials, we use combinatorial data derived from the generating series:

\begin{lemma}
For a graph $\Gr$ with a simplicial vertex $v$, we have:
\begin{equation}\label{eq::mobius_chordal}
\dim \Lie(\Gr) = \deg(v) \cdot \dim \Lie(\Gr\setminus v).
\end{equation}
\end{lemma}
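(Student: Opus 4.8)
The plan is to deduce the identity from the fibration of Proposition~\ref{prop::fiber_arguement} combined with the Leray--Hirsch theorem, using the Orlik--Solomon generators to supply the required global classes on the total space. Throughout I tacitly assume $|V_{\Gr}|\geq 2$, so that $\deg(v)\geq 1$ and $\Gr\setminus v$ is again connected (a path through $v$ can be rerouted, since the neighbours of a simplicial vertex form a clique).

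First I would specialise Proposition~\ref{prop::fiber_arguement} to $M=\mathbb{R}^d$: for a simplicial vertex $v$ the forgetting map
\[
\pi\colon \Conf_{\Gr}(\mathbb{R}^d)\to \Conf_{\Gr\setminus v}(\mathbb{R}^d)
\]
is a fibration with a section, whose fiber over a configuration $(x_w)_{w\neq v}$ is $\mathbb{R}^d\setminus\{x_w\mid (v,w)\in E_{\Gr}\}$. Because $v$ is simplicial, its neighbours are pairwise adjacent, hence occupy $\deg(v)$ distinct positions in any point of $\Conf_{\Gr\setminus v}(\mathbb{R}^d)$; thus the fiber is $\mathbb{R}^d$ minus exactly $k:=\deg(v)$ points, so $F\simeq \bigvee^{k}S^{d-1}$ and $H^{\bullet}(F;\mathbb{Q})$ is concentrated in degrees $0$ and $d-1$ with $\dim H^{d-1}(F)=k$.

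Second I would verify the hypotheses of Leray--Hirsch. The classes $\omega_{vw}\in \OS^{d-1}_d(\Gr)=H^{d-1}(\Conf_{\Gr}(\mathbb{R}^d))$, indexed by the edges $(v,w)\in E_{\Gr}$ incident to $v$, restrict on each fiber to the $(d-1)$-class encircling the removed point $x_w$, and these restrictions form a basis of $H^{d-1}(F)$. Since $H^{\bullet}(F)$ is finite-dimensional over $\mathbb{Q}$ and the $k$ global classes $\omega_{vw}$ restrict to a fiber basis, Leray--Hirsch yields an isomorphism of graded vector spaces
\[
H^{\bullet}(\Conf_{\Gr}(\mathbb{R}^d))\cong H^{\bullet}(\Conf_{\Gr\setminus v}(\mathbb{R}^d))\otimes H^{\bullet}(F).
\]
This simultaneously disposes of the monodromy subtlety in the $d=2$ case: the restriction map lands in the $\pi_1$-invariants of $\mathcal{H}^{d-1}(F)$, so the existence of spanning invariant classes forces the local system to be trivial.

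Finally I would extract the top degree. The top nonzero degree of $\OS_d(\Gr)$ is $(d-1)(|V_{\Gr}|-1)$, that of $\OS_d(\Gr\setminus v)$ is $(d-1)(|V_{\Gr}|-2)$, and $H^{\bullet}(F)$ vanishes above degree $d-1$; hence in the tensor decomposition the summand in top total degree is forced to be $\OS^{\mathrm{top}}_d(\Gr\setminus v)\otimes H^{d-1}(F)$, giving
\[
\dim\OS^{\mathrm{top}}_d(\Gr)=\dim H^{d-1}(F)\cdot \dim\OS^{\mathrm{top}}_d(\Gr\setminus v)=\deg(v)\cdot\dim\OS^{\mathrm{top}}_d(\Gr\setminus v).
\]
The one delicate point is the identification of the restriction of $\omega_{vw}$ with the generator of $H^{d-1}$ around $x_w$; everything else is bookkeeping on the degenerate corner of the spectral sequence. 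As an alternative, one may instead invoke the isomorphism of graphical collections $H_{\bullet}(\D_d)\cong\Com\circ\Susp^{1-d}\Lie$, under which $\OS^{\mathrm{top}}_d(\Gr)$ is dual to the single-block summand $\Susp^{1-d}\Lie(\Gr)$. This shows $\dim\OS^{\mathrm{top}}_d(\Gr)=\dim\Lie(\Gr)$ is independent of $d$, reducing the claim to the combinatorial recursion $\dim\Lie(\Gr)=\deg(v)\cdot\dim\Lie(\Gr\setminus v)$, which can then be checked for any single convenient $d\geq 3$, where the base is simply connected and the spectral sequence is manifestly untwisted.
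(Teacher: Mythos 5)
Your argument is correct, but it is not the route the paper takes. The paper's proof is purely combinatorial: it identifies $\dim\OS^{\mathrm{top}}_d(\Gr)=\dim\Lie(\Gr)=|\mu(\Gr)|$ with the absolute value of the lowest-order coefficient of the chromatic polynomial $\mathfrak{X}_{\Gr}(q)$ (via the graphical Euler-characteristic formalism of Corollary~\ref{cor::euler_characteristic}), and then invokes the deletion recurrence $\mathfrak{X}_{\Gr}(q)=(q-\deg(v))\,\mathfrak{X}_{\Gr\setminus v}(q)$, valid precisely because the neighbours of a simplicial vertex form a clique; comparing lowest coefficients gives~\eqref{eq::mobius_chordal} in one line. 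You instead run the Fadell--Neuwirth fibration of Proposition~\ref{prop::fiber_arguement} through Leray--Hirsch, using the edge classes $\omega_{vw}$ incident to $v$ as the global classes restricting to a fiber basis of $H^{d-1}\bigl(\bigvee^{\deg(v)}S^{d-1}\bigr)$, and then read off the top degree. Your approach proves strictly more --- the full additive factorization $H^{\bullet}(\Conf_{\Gr}(\mathbb{R}^d))\cong H^{\bullet}(\Conf_{\Gr\setminus v}(\mathbb{R}^d))\otimes H^{\bullet}(F)$, hence a Poincar\'e-polynomial recursion rather than just its top coefficient --- at the cost of the (standard but nontrivial) identification of the restriction of $\omega_{vw}$ with the generator encircling $x_w$, which you correctly flag as the delicate point. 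Two minor remarks: your reading of ``top degree'' via $(d-1)(|V_{\Gr}|-1)$ degenerates at $d=1$, where all of $\OS_1$ sits in degree $0$; the paper's M\"obius-function formulation (and your own alternative via $\dim\OS^{\mathrm{top}}_d(\Gr)=\dim\Lie(\Gr)$ being independent of $d$) sidesteps this. And your alternative reduction to the recursion $\dim\Lie(\Gr)=\deg(v)\cdot\dim\Lie(\Gr\setminus v)$ is essentially the paper's statement, except that the paper verifies that recursion chromatically rather than by choosing a convenient $d\geq 3$.
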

\begin{proof}
Recall that the graded generating series of the Orlik--Solomon algebra $\OS_d(\Gamma)$ is determined by the chromatic polynomial of $\Gamma$ (see~\cite[\S 2.3]{khoroshkin2024hilbert}). Due to the isomorphism~\eqref{eq::Ed::graphical} of graphical collections, we have:
$$\dim \Lie(\Gamma) = |\mathfrak{X}_{\Gamma}(0)|.$$
When a vertex $v$ is simplicial, all its neighbors are adjacent to each other. Consequently, in any proper coloring of $\Gamma$, the only restriction on the color of $v$ is that it must differ from the colors of its $\deg(v)$ neighbors (which are already distinct from one another). This leads to the following recurrence for chromatic polynomials:
\[
\mathfrak{X}_{\Gr}(q) = (q - \deg(v)) \mathfrak{X}_{\Gr\setminus v}(q).
\] 
Setting $q=0$ we obtain:
$$\dim \Lie(\Gamma) = |\mathfrak{X}_{\Gamma}(0)| = \deg(v) \cdot |\mathfrak{X}_{\Gr\setminus v}(0)| = \deg(v) \cdot \dim \Lie(\Gr\setminus v).$$
\end{proof}

We now state the main result of this subsection:

\begin{theorem}
\label{prop::Graphs_to_OS} 
For a chordal graph $\Gr$, the morphism $\mathbb{I}$ in~\eqref{eq::Graphs=OS} is a quasi-isomorphism compatible with the cocontractad structure.
\end{theorem}
\begin{proof}
For a chordal graph $\Gamma$ with $m$ vertices, the set of generators for $\OS_d(\Gamma)$ corresponds to the edges of $\Gamma$. Thanks to the definitions of $\mathbb{I}$ and $\Graphs_d$ we have the following commutative diagram:
$$
\begin{tikzcd}
    \Graphs_d(\Gamma) \arrow[r, two heads, "\mathbb{I}"] \arrow[d, hook, "\imath_{\Gamma}"] & \OS_{d}(\Gamma) \arrow[d, hook] \\
    \Graphs_{d}(m) \arrow[r, two heads, "\mathbb{I}"] & \OS_d(m).
\end{tikzcd}
$$
The vertical arrows are embeddings, and the horizontal arrows are projections. Since the bottom row is a morphism of complexes~\cite[Thm.~8.1]{lambrechts2014formality}, the top row is as well. Furthermore, the commutativity of Diagram~\eqref{eq::diagram::graphs::composition}, alongside the analogous properties for Orlik--Solomon algebras, implies that $\mathbb{I}$ is compatible with the cocomposition maps for chordal graphs.

It remains to show that $\mathbb{I}$ is a quasi-isomorphism. Let $\Graphs_d^{\mathrm{con}} \subset \Graphs_d$ be the subcomplex of connected admissible diagrams. Since the differential in $\Graphs$ preserves the set of connected components, we have a dg-isomorphism of graphical collections:
\[
\Graphs_d \cong \Com \circ \Graphs_d^{\mathrm{con}}.
\] 
By the K\"unneth isomorphism, $H^{\bullet}(\Graphs_d) \cong \Com \circ H^{\bullet}(\Graphs^{\mathrm{con}}_d)$. Similarly, for Orlik--Solomon algebras:
\[
\OS_d \cong \Com \circ \OS^{\mathrm{top}}_d \cong \Com \circ \Susp^{d-1}(\Lie)^*,
\]
where $\OS_d^{\mathrm{top}}$ is the sub-cocontractad concentrated in the top cohomological degree $(d-1)(|V_{\Gr}|-1)$. It suffices to show $H^{\bullet}(\Graphs^{\mathrm{con}}_d) \cong \OS^{\mathrm{top}}_d$, which we establish by induction on $|V_{\Gr}|$.

Fix a simplicial vertex $v \in V_{\Gr}$ and let $\widetilde{\Graphs}^{\mathrm{con}}_d(\Gr) \subset \Graphs^{\mathrm{con}}_d(\Gr)$ be the submodule generated by diagrams where $v$ is adjacent to exactly one external vertex. Following~\cite[Lemma~8.5]{lambrechts2014formality}, this inclusion is a quasi-isomorphism. This subcomplex decomposes as:
\[
\widetilde{\Graphs}^{\mathrm{con}}_d(\Gr) \cong \bigoplus_{(w,v) \in E_{\Gr}}  \Graphs^{\mathrm{con}}_d(\Gr\setminus v).
\]
Taking homology and using the induction hypothesis alongside Equations~\eqref{eq::mobius_chordal}, we obtain:
\[
\dim H^{\bullet}(\Graphs^{\mathrm{con}}_d(\Gr)) = \deg(v) \cdot \dim H^{\bullet}(\Graphs^{\mathrm{con}}_d(\Gr\setminus v)).
\]
This matches the dimension of $\OS^{\mathrm{top}}_d(\Gamma)$, confirming that $\mathbb{I}$ is a quasi-isomorphism.
\end{proof}

\begin{remark}
The proof of Theorem~\ref{prop::Graphs_to_OS} relies heavily on the chordal assumption. For the cycle $\Cyc_4$, for instance, $H^{\bullet}(\Graphs(\Cyc_4)) \not\cong \OS^{\bullet}_d(\Cyc_4)$.
\end{remark}

\subsubsection{Piecewise algebraic forms and Kontsevich integrals}
Let us outline several technical geometric statements about Fulton-Macferson compactifications.

For a graph $\Gr$, the configuration space $\Conf_{\Gr}(\Real^d)$ is a complement of a real affine space to a subspace arrangement, hence $\Conf_{\Gr}(\Real^d)$ is a semi-algebraic manifold in the sense of~\cite{bochnak2013real}. By similar reasons, the subspace $\NConf_{\Gr}(\Real^d)\subset \Conf_{\Gr}(\Real^d)$ is a semi-algebraic manifold. Recall that the Fulton-Macferson compactification $\FM_d(\Gr)$ is a closure of a semi-algebraic set $\NConf_{\Gr}(\Real^d)$ in the product of spheres that are also semi-algebraic sets, hence $\FM_d(\Gr)$ is a semi-algebraic set. Moreover, from the theory of wonderful compactifications we know that $\FM_d(\Gr)$ is obtained from the sphere $\Sph(W_{\Gr})\cong S^{d(|V_{\Gr}|-1)-1}$ by a sequence of spherical blow-ups along divisors indexed by tubes, see~\cite[p.~7]{de1995wonderful} for an analogous statement in the projective case, hence $\FM_d(\Gr)$ is indeed a semi-algebraic manifold.

For an inclusion of graphs $\Gr\subset \Omega$, the corresponding map of configuration spaces $\Conf_{\Omega}(\Real^d)\to \Conf_{\Gr}(\Real^d)$ lifts to the map of their Fulton-Macferson compactifications given by the commutative diagram
\[\begin{tikzcd}
	{\FM_d(\Omega)} & {\prod_{G\subset V_{\Omega}\text{-tube}} \mathbb{S}(W_G)} \\
	{\FM_d(\Gamma)} & {\prod_{H\subset V_{\Gamma}\text{-tube}} \mathbb{S}(W_H)}
	\arrow[from=1-1, to=1-2]
	\arrow[from=1-1, to=2-1]
	\arrow["{\mathrm{pr}}", from=1-2, to=2-2]
	\arrow[from=2-1, to=2-2]
\end{tikzcd},\] and this map is semi-algebraic since it is a composition of a closed inclusion and a projection. In the special case $V_{\Gr}=V_{\Omega}$, adopting the technique of iterated projective blow-ups~\cite[Prop~2.10]{rains2010homology} to the spherical case, we see that $p\colon \FM_d(\Omega)\to \FM_d(\Gr)$ is a sequence of spherical blow-ups indexed by tubes of $\Omega$ that are not tubes for $\Gr$.

We briefly recall the construction of Kontsevich's morphism $\hat{\mathbb{I}}\colon \Graphs_d(n)\to \Omega_{PA}(\FM_d(n))$, where $\Omega_{PA}$ is the de Rham complex of piecewise algebraic forms in the sense of~\cite[Sec.~5.4]{hardt2011real}. We fix a standard top-degree volume form 
$$\eta^{d-1}:=\kappa_d \sum_{i=1}^{d} x_i dx_1\wedge \ldots \wedge\widehat{dx_i}\wedge \ldots\wedge dx_n$$ 
on $\FM_d(2)\cong S^{d-1}\subset \mathbb{R}^{d}$. Here $x_i$ are standard coordinates in $\mathbb{R}^{d}$ and $\kappa_d$ is the constant such that $\int_{S^{d-1}}\eta^{d-1} = 1$. For an admissible diagram $\D\in \Graphs_d(n+m)$ with $m$ internal vertices, we define a semi-algebraic differential form $\eta_{\D}\in \Omega^{\bullet}_{PA}(\FM_d(n))$ by fiber integration:
\[
\eta_{\D}:=\int_{\FM_d(n+m)\to\FM_d(n)}\bigwedge_{e\in\Edge(\D)}\eta_e,
\]
where $\eta_e:=\pi^*_e\eta$ is the pullback of the fixed volume form along the projection $\pi_e\colon \FM_d(n+m)\to\FM_d(2)$.  
\begin{lemma}\label{lemma::kontsevich_map}
For a chordal graph $\Gr$ and a diagram $D\in\Graphs_d(\Gr)$ the differential form $\eta_D\in \Omega^{\bullet}_{PA}(\FM_d(n))$ is the pullback of the differential form $\nu_d\in \Omega^{\bullet}_{PA}(\FM_d(\Gr))$ along the projection $$p_{\Gr}\colon\FM_d(V_{\Gr})\twoheadrightarrow\FM_d(\Gr).$$
In other words, we have a unique factorization of morphisms of dg-algebras
\[\begin{tikzcd}
	{\Graphs_d(\Gr)} & & {\Omega_{PA}(\FM_d(\Gr))} \\
	{\Graphs_d(V_{\Gr})} &  & {\Omega_{PA}(\FM_d(V_{\Gr}))}
    \arrow["{\hat{\mathbb{I}}}_{V_{\Gr}}","D\to \eta_D"', from=2-1, to=2-3]
	\arrow["{\iota_{\Gr}}", hook, from=1-1, to=2-1]
	\arrow["{\hat{\mathbb{I}}}_{\Gr}","D\to \nu_D"', dashed, from=1-1, to=1-3]
	\arrow["{p^*_{\Gr}}", hook, from=1-3, to=2-3]
\end{tikzcd}\]
\end{lemma}
\begin{proof}
The morphism $p_{\Gr}\colon \FM_d(V_{\Gr})\to \FM_d(\Gr)$ corresponds to the inclusion $\Gr\subset \K_{V_{\Gr}}$ of graphs on the same vertex set, hence $p_{\Gr}$ is a sequence of spherical blow ups, so the pullback $p^*_{\Gr}$ is injective. Since both $\iota_{\Gr}, p^*_{\Gr}$ are injective, the existence and uniqueness of $\hat{\mathbb{I}}_{\Gr}$ is equivalent to an inclusion $$\hat{\mathbb{I}}_{V_{\Gr}}(\Graphs_d(\Gr))\subset p_{\Gr}^*\Omega_{PA}(\FM_d(\Gr)).$$ 

$\hat{\mathbb{I}}$ is a homomorphism of dgc algebras, so it suffice to check the desired inclusion for generators of $\Graphs_d(\Gr)$, i.e., for internally connected diagrams $\mathsf{ICG}_d(\Gr)$. For a diagram $\D\in \mathsf{ICG}_d(\Gr)$, let $G_{\D}\subset V_{\Gr}$ be a clique as in Lemma~\ref{lem::ICG::subgraph} and consider the corresponding commutative triangle
\[\begin{tikzcd}
	{\FM_d(V_{\Gr})} & {\FM_d(\Gr)} \\
	& {\FM_d(G_{\D})}
	\arrow["{p_{\Gr}}", from=1-1, to=1-2]
	\arrow["{\pi_{G_{\D}}}"', from=1-1, to=2-2]
	\arrow["{\pi_{G_{\D}}^{\Gr}}", from=1-2, to=2-2]
\end{tikzcd}\]
By the construction of $G_{\D}$, $\D$ has the form $\iota_{G_{\D}}(\D|_{G_{\D}})$, where $\iota_{G_{\D}}\colon \Graphs(G_{\D})\to \Graphs(V_{\Gr})$ is the inclusion. Recall that Kontsevich's morphism $\hat{\mathbb{I}}$ is a morphism of (weak) cooperads with nullary part~\cite[Pr.~9.21]{lambrechts2014formality}, hence, for an inclusion of sets $S\subset T$, we have $\hat{\mathbb{I}}_{T}(\iota_S\D)=\pi^*_S\hat{\mathbb{I}}_{S}(\D)$. By combining all arguments, we deduce the desired result
\[
\hat{\mathbb{I}}_{V_{\Gr}}(\D)=\hat{\mathbb{I}}_{V_{\Gr}}(\iota_{G_{\D}}(\D|_{G_{\D}}))=\pi^*_{G_{\D}}\hat{\mathbb{I}}_{G_{\D}}(\D|_{G_{\D}})=p^*_{\Gr}(\pi^{\Gr}_{G_{\D}})^*\hat{\mathbb{I}}_{G_{\D}}(\D|_{G_{\D}}) \in p^*_{\Gr}\Omega_{PA}(\FM_d(\Gr)).
\]
\end{proof} 
Thanks to Lemma~\ref{lemma::kontsevich_map}, the collection of morphisms $\hat{\mathbb{I}}_{\Gr}\colon \Graphs_d(\Gr)\to \Omega_{PA}(\FM_d(\Gr))$ defines a morphism of weak Hopf dg cocontractads $\hat{\mathbb{I}}\colon \Graphs_d\to \Omega_{PA}(\FM_d)$
\begin{proposition}\label{prop::Graphs_to_Forms} 
The morphism $\hat{\mathbb{I}}$ is a quasi-isomorphism of weak Hopf dg-cocontractads in the class of chordal graphs.
\end{proposition}
\begin{proof}
It remains to show that $\hat{\mathbb{I}}$ is a quasi-isomorphism for each particular chordal graph $\Gamma$.
For an edge $e=(v,w)\in E_{\Gr}$, the image of the diagram $\D_e=\begin{tikzpicture}[scale=0.5, ]
	\node[ext] (v0) at (-0.5,0) {$\scriptstyle v$};
	\node[ext] (v1) at (0.5,0) {$\scriptstyle w$};
	\draw (v0) edge (v1);
\end{tikzpicture}$ under $\hat{\mathbb{I}}$ is the closed form representing the class $\omega_e\in \OS^{d-1}_d(\Gr)\cong H^{\bullet}(\FM_d(\Gr))$. Hence, by Proposition~\ref{prop::Graphs_to_OS}, on cohomology we obtain an isomorphism of cocontractads:
\[
H^{\bullet}(\hat{\mathbb{I}})\colon H^{\bullet}(\Graphs_d)\overset{\mathbb{I}}{\cong} \OS_d\overset{\Id}{\to}\OS_d\cong H^{\bullet}(\FM_d).
\]
\end{proof} 

Combining Proposition~\ref{prop::Graphs_to_OS} and Proposition~\ref{prop::Graphs_to_Forms}, we deduce the following:

\begin{theorem}
\label{thm:chordal:formal}
For $d\geq 2$, the little $d$-disks contractad $\D_d$ is formal and coformal over $\mathbb{R}$ in the class of chordal graphs in the sense of Section~\ref{sec::restriction_to_chordal}.
\end{theorem}
\begin{proof}
For the formality statement, it suffices to combine
Proposition~\ref{prop::Graphs_to_OS} with Proposition~\ref{prop::Graphs_to_Forms}.
For the coformality statement, one should follow the approach outlined in the analogous result of~\cite{vsevera2011equivalence},
where a zigzag of quasi-isomorphisms of operads in the category of $L_\infty$-algebras is constructed,
connecting the Drinfeld–Kohno Lie operad $\mathfrak{t}_d$ with the operad $\mathsf{ICG}_d(\Gr)$ of internally connected diagrams.
\end{proof}

It is worth noting that the graphic matroids associated with chordal graphs are prominent examples of supersolvable matroids. The formality of the corresponding subspace arrangements was established in~\cite{coron2025matroid} using methods that generalize Kontsevich's configuration space integrals to a broader matroidal setting.

\subsection{Cyclic graphs}
\label{sec:sub:cycles}
In this section, we explain why the results of the previous section do not hold for arbitrary graphs. The main problem is that the forgetting map $\Conf_{\Gr}(M)\to \Conf_{\Gr\setminus v}(M)$ is not a fibration for non-simplical vertices. The simplest examples of non-chordal graphs are cycles $\Cyc_n$ with $n\geq 4$ vertices. The corresponding configuration spaces are of the form
\[
\Conf_{\Cyc_n}(X)=\{(x_1,\cdots,x_n)\in X^{\times n}| x_i\neq x_{i+1}, \text{ for all }i\in \mathbb{Z}/n\}.
\]In contrast with chordal graphs, the pure braid groups for cycles are abelian. More generally, we have
\begin{proposition} For a graph with no $3$-cycles, the pure braid group is a free abelian group generated by the set of edges
\[
\PB_{\Gr}=\pi_1(\Conf_{\Gr}(\mathbb{C}))=\mathbb{Z}^{\oplus E_{\Gr}}.
\] In particular, we have $\PB_{\Cyc_n}\cong \mathbb{Z}^{\oplus n}$, for $n\geq 4$.
\end{proposition}
\begin{proof}
For a triple of vertices $v,w,u$, consider the relation $T_{pr}T_{qr}T_{pq} = T_{pr}T_{qr}T_{pq}=T_{pr}T_{qr}T_{pq}$. Since the graph has no $3$-cycles, one of the elements in these products would be trivial, so we obtain a commuting relation.
\end{proof}

 We refer the reader to~\cite{felix2015rational} for the basics of rational homotopy theory for nilpotent spaces.  Recall from the beginning of Section~\ref{sec::chordal_cycles} that $\Conf_{\Gr}(\Cmplx)$ is formal for any graph, hence, the rational homotopy type of $\Conf_{\Cyc_n}(\mathbb{C})$ is determined by its minimal Sulivan model $\M\overset{\simeq}{\rightarrow} H^{\bullet}(\Conf_{\Cyc_n}(\mathbb{C}))=\OS^{\bullet}(\Cyc_n)$ of the Orlik-Solomon algebra
\[
\OS^{\bullet}(\Cyc_n)=\Lambda^{\bullet}\left(
\{\omega_{i,i+1}\}_{i\in \mathbb{Z}/n\mathbb{Z}}\left|\sum^{n}_{i=1} (-1)^{i-1}\omega_{12}\cdots \omega_{i,i+1}\cdots \omega_{n,1}\right.\right).
\]
Moreover, for a commutative graded algebra $A$, we have the isomorphism of $A_{\infty}$-algebras
\[
\mathcal{U}(\mathfrak{g}_A)\cong \mathrm{Ext}^{\bullet}_A(\mathsf{k},\mathsf{k}),
\] where $\mathfrak{g}_A$ is a $L_{\infty}$-algebra arising from the minimal Sullivan model for $A$, and $\mathrm{Ext}^{\bullet}_A(\mathsf{k},\mathsf{k})$ is the Yoneda algebra.
(We refer to~\cite{khoroshkin2023derived} and references therein for the description of the notion of a universal enveloping of an $L_\infty$ algebra and in which sense it is universal and why.)
The description of the rational homotopy groups of the $\Conf_{\Cyc_n}(\mathbb{C})$ and the Whitehead Lie bracket is described in Theorem~\ref{thm::cycl::homotopy} below.
The proof of this theorem is purely algebraic and is split into the following steps:
\begin{itemize}[itemsep=0pt,topsep=0pt]
    \item
First, we describe the noncommutative Gr\"obner basis for the commutative algebra $\OS^{\bullet}(\Cyc_n)$ (Lemma~\ref{lem::Grobner::Cycle}).
\item
Second, for $n\geq 5$ we describe the minimal noncommutative resolution of this algebra, which is shown to coincide with the Anick resolution (Lemma~\ref{lem::Anick::resol::cycle}).
\item
Third, we work out the quadratic component of the differential in the minimal resolution and show that the corresponding algebra $\mathrm{Ext}^{\bullet}_{\OS^{\bullet}(\Cyc_n)}(\mathbb{Q},\mathbb{Q})$ is quadratic and Koszul.
\item
Fourth, as a corollary, we end up with the description of the Whitehead Lie bracket and the first nontrivial Massey product on the rational homotopy groups of $\Conf_{\Cyc_n}(\mathbb{C})$ for $n\geq 5$ outlined in Theorem~\ref{thm::cycl::homotopy}.
Unfortunately, we do not know the description of all higher Lie brackets for these spaces.
\end{itemize}

In order to shorten the notation, we will write simply $\omega_i$ instead of the longer notation $\omega_{i,i+1}$.
\begin{lemma}
\label{lem::Grobner::Cycle}
The (skew) commutativity relations
$$\forall i<j \ \ \omega_i \omega_j + \underline{\omega_j \omega_i}=0, \quad \underline{\omega_i^2}=0 $$
together with the relation
\begin{equation}
\label{eq::cycle::rel}
\omega_1\cdot\ldots\cdot \omega_{n-1} -\omega_1\cdot\ldots \cdot \omega_{n-2} \cdot \omega_{n} +\ldots+(-1)^{n} \underline{\omega_2\cdot \ldots \omega_n}=0    
\end{equation}
form a noncommutative Gr\"obner basis for the Orlik–Solomon algebra $\OS(\Cyc_n)$ with respect to the standard degree-lexicographical ordering on monomials $\omega_1<\ldots<\omega_n$ (with leading monomials underlined).
\end{lemma}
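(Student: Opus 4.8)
The plan is to read off the leading monomials of the proposed generating set, determine the resulting normal monomials, count them, and match that count against the independently known dimension of $\OS(\Cyc_n)$. Since the set of normal monomials always spans the quotient of the free algebra by the ideal, an equality of counts forces the normal monomials to be a basis, and linear independence of the normal monomials is exactly the assertion that the given relations form a Gr\"obner basis.

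First I would record the leading monomials under the degree-lexicographical order with $\omega_1<\cdots<\omega_n$. For $i<j$ the relation $\omega_i\omega_j+\omega_j\omega_i$ has leading monomial $\omega_j\omega_i$ (its first letter is the larger one), each $\omega_i^2$ is its own leading monomial, and in the cycle relation \eqref{eq::cycle::rel} every summand is a strictly increasing word of length $n-1$, so the summand with the largest first letter, namely $\omega_2\omega_3\cdots\omega_n$, is the leading monomial. Consequently a word is normal precisely when it is strictly increasing (it avoids all descents $\omega_j\omega_i$ with $j>i$ and all squares $\omega_i^2$) and does not contain $\omega_2\omega_3\cdots\omega_n$ as a consecutive factor. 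A strictly increasing word is just a subset of $\{1,\dots,n\}$, and the only strictly increasing words containing $\omega_2\cdots\omega_n$ are $\omega_2\cdots\omega_n$ itself and $\omega_1\omega_2\cdots\omega_n$; hence the normal monomials are exactly the subsets of $\{1,\dots,n\}$ other than $\{2,\dots,n\}$ and $\{1,\dots,n\}$, of which there are $2^n-2$.

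Next I would pin down $\dim_{\mathbb{Q}}\OS(\Cyc_n)$. The ideal generated by the skew-commutativity relations, the squares, and the single boundary relation \eqref{eq::cycle::rel} (the boundary of the unique circuit of $\Cyc_n$) is the Orlik--Solomon ideal, so the quotient is $\OS(\Cyc_n)$. Reading the Hilbert series directly from the presentation gives $\sum_{k=0}^{n-2}\binom{n}{k}t^{k}+(n-1)t^{n-1}$: in degrees $\le n-2$ only the exterior-algebra relations act, in degree $n-1$ one quotients the $n$-dimensional space $\Lambda^{n-1}$ by the one-dimensional span of the relation, and in degree $n$ the products $\omega_i\cdot\eqref{eq::cycle::rel}$ kill the top class $\omega_1\cdots\omega_n$. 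Evaluating at $t=1$ yields $\dim_{\mathbb{Q}}\OS(\Cyc_n)=2^n-2$ (equivalently, with the ordering $\omega_1<\cdots<\omega_n$ the unique broken circuit is $\{1,\dots,n-1\}$, so the no-broken-circuit subsets are all subsets except $\{1,\dots,n-1\}$ and $\{1,\dots,n\}$, again $2^n-2$). Since the normal monomials span $\OS(\Cyc_n)$ and number exactly $\dim_{\mathbb{Q}}\OS(\Cyc_n)$, they are a basis, which is equivalent to the stated Gr\"obner basis property.

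For an entirely self-contained argument one could instead verify Bergman's diamond lemma directly, and this is where I expect the real work to lie. The quadratic relations alone are the standard Gr\"obner basis of the exterior algebra, whose ambiguities $\omega_c\omega_b\omega_a$ with $c\ge b\ge a$ resolve in the usual way. The genuinely new ambiguities are the overlaps of $\omega_2\cdots\omega_n$ with a descent at either end, i.e.\ the words $\omega_b\omega_2\omega_3\cdots\omega_n$ for $b\ge 2$ and $\omega_2\cdots\omega_n\omega_a$ for $a\le n$. The main obstacle is the sign bookkeeping in these two families: in each case one reduction path bubbles the extra letter through the increasing block until it either collides with an equal letter (producing a square, hence $0$) or is reinserted to form the full word $\omega_1\cdots\omega_n$, while the competing path first applies \eqref{eq::cycle::rel}; both must be shown to terminate at $0$, the key input being that $\omega_1\cdots\omega_n$ itself reduces to $0$, consistently with $\OS^{n}(\Cyc_n)=0$.
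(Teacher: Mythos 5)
Your proposal is correct, but it proves the lemma by a genuinely different route than the paper. The paper's proof is the direct Buchberger/Composition--Diamond verification: it observes that the only nontrivial compositions (overlap ambiguities) are those between the cycle relation~\eqref{eq::cycle::rel} and the skew-commutativity relations, and checks by an explicit signed computation that the resulting $s$-polynomials reduce to zero --- precisely the ``self-contained'' argument you sketch in your final paragraph and correctly flag as the sign-sensitive part. Your primary argument instead is the standard counting criterion: the normal words relative to the leading monomials $\omega_j\omega_i$ ($j\ge i$) and $\omega_2\cdots\omega_n$ are the strictly increasing words other than $\omega_2\cdots\omega_n$ and $\omega_1\cdots\omega_n$, hence $2^n-2$ of them, and this matches $\dim_{\mathbb{Q}}\OS(\Cyc_n)=2^n-2$ (known independently from the no-broken-circuit basis, or from your Hilbert-series computation, which is sound: the single degree-$(n-1)$ relation is nonzero in $\Lambda^{\bullet}$ and its products with each $\omega_i$ kill the top class). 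Since normal words always span the quotient, equality of cardinalities forces them to be a basis, which is equivalent to the Gr\"obner property. Both arguments are valid; yours trades the paper's delicate sign bookkeeping for reliance on the classical Orlik--Solomon dimension count as external input, and as a bonus exhibits the normal monomials as exactly the NBC sets (under the ``delete the least element of the circuit'' convention, the unique broken circuit is $\{2,\dots,n\}$, matching your leading monomial; your $\{1,\dots,n-1\}$ uses the opposite convention but yields the same count). The one point worth making explicit in a write-up is that the reduction system terminates (each rewriting step strictly decreases monomials in the deg-lex well-order), which is what guarantees that normal words span.
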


\begin{proof}
This is a straightforward verification that all $s$-polynomials of all compositions reduce to zero. The only nontrivial compositions occur between Relation~\eqref{eq::cycle::rel} and the commutativity relations. For example:
\begin{multline}
(-1)^{n-1}\eqref{eq::cycle::rel}\cdot \omega_i - \omega_2\ldots \omega_{n-1}\cdot( \omega_n \omega_i - \omega_i \omega_n) = \sum_{j=1}^{n-1} (-1)^{n-j} \omega_1\ldots \widehat{\omega_j}\ldots \omega_n \omega_i - \omega_1\ldots \omega_{n-1} \omega_i \omega_n = \\
= \sum_{j=1,j\neq i}^{n} (-1)^{i} \omega_1 \ldots \omega_i^2 \ldots \widehat{\omega_j}\ldots \omega_n \pm \omega_1\ldots \omega_n + \omega_2 \ldots \omega_{n-1} \omega_i \omega_n \simeq \sum_{j\neq i} \pm  \omega_1 \ldots \omega_i^2 \ldots \widehat{\omega_j}\ldots \omega_n  = 0
\end{multline}
\end{proof}

Among the homological applications of noncommutative Gr\"obner bases, one of the most important is the \textbf{Anick resolution} of the trivial module. This construction is based on resolutions for monomial algebras introduced in~\cite{anick1986homology}. 
We briefly recall the recursive definition of Anick chains and tails—the central combinatorial objects in the Anick construction (following the clear exposition in~\cite[\S 3]{ufnarovski}). These are defined for a monomial algebra
$$\hat{A} := \Bbbk\langle \omega_1, \ldots, \omega_n \mid \hat{g}_1, \ldots, \hat{g}_m \rangle,$$
where the relations $\{\hat{g}_j\}$ are monomials.

\begin{definition}The sets of $m$-chains and their corresponding tails are defined recursively as follows:
\begin{itemize}[itemsep=0pt,topsep=0pt]
\item The unique $0$-chain is the unit element $1$, which has an empty tail.
\item The $1$-chains are the generators $\omega_1, \dots, \omega_n$ of the algebra $\hat{A}$. For a $1$-chain $f = \omega_i$, the tail is the word itself, $t = \omega_i$.
\item An $m$-chain ($m \ge 2$) is a word $f$ that can be written as a product $f = th$, where $h$ is an $(m-1)$-chain and $t$ is a non-empty word called the tail of $f$. Let $r$ be the tail of $h$; then the tail $t$ is uniquely determined by the following conditions:
\begin{enumerate}
\item The product $tr$ contains a unique monomial relation $\hat{g}$ as a subword.
\item This relation $\hat{g}$ is a prefix of $tr$ (i.e., $tr = \hat{g}s$ for some word $s$), but $\hat{g}$ is not a subword of $t$.
\item No proper suffix of $t$ satisfies the above conditions.
\end{enumerate}
\end{itemize}
\end{definition}
As shown in~\cite{anick1986homology}, the set of $m$-chains $C_m$ forms a basis for the $m$-th term in the minimal free resolution of $\Bbbk$ as a left $\hat{A}$-module:
\[
\begin{tikzcd}
\ldots \ar[r] & \hat{A} \otimes C_m \ar[rr, "a \otimes tf \mapsto at \otimes f"] && \hat{A} \otimes C_{m-1} \ar[r] & \ldots \ar[r] & \hat{A} \otimes C_1 \ar[r] & \hat{A} \ar[r, dashed] & \Bbbk.
\end{tikzcd}
\]

Now let
\[
A := \Bbbk\langle \omega_1, \ldots, \omega_n \mid g_1, \ldots, g_m \rangle
\]
be an algebra with a reduced Gr\"obner basis \( g_1, \ldots, g_m \), and let \( \hat{g}_i \) denote the leading monomial of \( g_i \). Then there exists a resolution of \( A \) whose basis is indexed by the chains constructed for the associated graded algebra \( \hat{A} \) with monomial relations, and whose differential is given by modifying the monomial differential with additional lower-order terms.

\begin{lemma}
\label{lem::Anick::Cycle}
The set of \( m \)-chains for the algebra \( \OS(\Cyc_n) \) is described as follows:
\begin{equation}
\label{eq::Anick::chains}
\left\{
\begin{array}{c}
(\omega_{i_{01}} \ldots \omega_{i_{0k_{0}}})(\omega_{2}\ldots \omega_n)
(\omega_{i_{11}} \ldots \omega_{i_{1 k_{1}}})(\omega_{2}\ldots \omega_n)
\ldots (\omega_{2}\ldots \omega_n)
(\omega_{i_{l1}} \ldots \omega_{i_{lk_{l}}}) \omega_1^{k} \\
\text{where } l, k_i, k \in \mathbb{Z}_{\geq 0} \text{ and } 
\forall j = 0, \ldots, l: \ i_{j1} \geq i_{j2} \geq \ldots \geq i_{jk_j} \geq 2, \\
\text{and } m = k + \sum_{i=0}^{l} k_i + 2l.
\end{array}
\right\}
\end{equation}
\end{lemma}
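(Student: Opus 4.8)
The plan is to read off the leading monomials from the Gr\"obner basis of Lemma~\ref{lem::Grobner::Cycle} and then run the Anick/Ufnarovski combinatorics for the associated monomial algebra $\hat{A}$. The obstructions (the minimal generators of the leading-term ideal) are of two kinds: the \emph{short} obstructions $\omega_b\omega_a$ with $b\ge a$ (coming from the skew-commutativity and the square relations $\omega_i^2$), and the single \emph{long} obstruction $L:=\omega_2\omega_3\cdots\omega_n$, the underlined leading term of \eqref{eq::cycle::rel}. Note that $L$ is strictly increasing, hence divisible by no short obstruction, so it is genuinely minimal. I would use the standard description of $m$-chains as words $w$ carrying a cover by overlapping obstructions $g_1,\dots,g_{m-1}$ ($g_1$ a prefix of $w$, $g_{m-1}$ a suffix, each $g_{i+1}$ starting strictly inside $g_i$ and extending past it), which is equivalent to the recursive ``tail'' definition recalled above (see~\cite[\S 3]{ufnarovski}, \cite{anick1986homology}). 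The key elementary observation, used throughout, is that a short prefix-obstruction forces the first two letters of a word to be non-increasing, whereas a long prefix-obstruction forces them to be $\omega_2\omega_3$ (increasing); these are mutually exclusive, so the obstruction demanded at each stage of the recursion is automatically unique once it exists.

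First I would show that every chain has the asserted shape. Since consecutive obstructions of the cover overlap, every adjacent pair of letters of $w$ lies inside some $g_i$, hence each such pair is \emph{either} non-increasing (if it sits in a short $g_i$) \emph{or} is an interior pair $\omega_j\omega_{j+1}$ of a copy of $L$; in particular every increasing adjacent pair consists of consecutive integers $j,j+1$ with $j\ge 2$. Two consequences follow. As $\omega_1$ can be followed only by $\omega_1$ (the pair $\omega_1\omega_a$ is neither non-increasing for $a\ge 2$ nor interior to $L$, which omits $\omega_1$), all letters $\omega_1$ form a terminal block $\omega_1^{k}$. Among the remaining letters (all $\ge 2$), a maximal increasing run cannot start after $\omega_2$ nor stop before $\omega_n$: either would place an increasing pair whose predecessor or successor is again an increasing consecutive pair, contradicting maximality since those pairs must themselves be interior to a copy of $L$. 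Thus every maximal increasing run is a full copy of $L$; excising the $l$ copies of $L$ leaves weakly decreasing blocks with indices $i_{j1}\ge\cdots\ge i_{jk_j}\ge 2$, producing exactly the word of \eqref{eq::Anick::chains}. Conversely, every word of that form carries a cover: inside each block consecutive letters give short obstructions, at each block--$L$, $L$--block and $L$--$L$ junction the boundary pair $\omega_b\omega_2$, $\omega_n\omega_b$ or $\omega_n\omega_2$ is a short obstruction, each $L$ is one obstruction, and the trailing $\omega_1^{k}$ contributes obstructions $\omega_1^2$. Every overlap here is by a single letter, hence proper, and by the exclusivity observation the cover is forced, so the word is indeed a chain.

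It remains to match the homological degree. An $m$-chain uses exactly $m-1$ obstructions, so I only count them. Writing $N=\sum_i k_i+l(n-1)+k$ for the total number of letters, the strictly increasing adjacent pairs are precisely the $n-2$ interior pairs of each of the $l$ copies of $L$, i.e. $l(n-2)$ of them; hence the number of non-increasing adjacent pairs, which equals the number of short obstructions in the cover, is $(N-1)-l(n-2)=\sum_i k_i+k+l-1$. Adding the $l$ long obstructions gives $m-1=\sum_i k_i+k+2l-1$, that is $m=k+\sum_i k_i+2l$, as claimed.

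The main obstacle is the bookkeeping around the long obstruction $L$: one must check that $L$ never overlaps a short obstruction or a second copy of $L$ in a way that would produce an alternative admissible cover (violating the uniqueness built into the chain definition), and dually that no partial increasing run can occur. Both are governed by the single structural fact that the interior pairs of $L$ are the consecutive integers $2,3,\dots,n$, so $L$ has no self-overlap and shares no length-$2$ prefix with any short obstruction. This is where $|L|=n-1\ge 3$ is used, so that $L$ is genuinely longer than every short obstruction; the separate and finer question of whether this Anick complex is already \emph{minimal} is exactly what Lemma~\ref{lem::Anick::resol::cycle} settles for $n\ge 5$.
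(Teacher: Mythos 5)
Your proof is correct and takes essentially the same route as the paper: read off the obstructions (the underlined leading monomials of the Gr\"obner basis of Lemma~\ref{lem::Grobner::Cycle}, namely the non-increasing pairs $\omega_b\omega_a$, $b\geq a$, and the single long obstruction $\omega_2\cdots\omega_n$) and run the Anick--Ufnarovski chain combinatorics for the associated monomial algebra. The paper's own proof is only a two-sentence sketch identifying the tails $\omega_1,\dots,\omega_n$ and $\omega_2\cdots\omega_{n-1}$ and invoking the inductive description; your overlapping-cover analysis of adjacent pairs and the degree count $m-1=(\text{number of non-increasing pairs})+l$ supply precisely the details that the paper leaves implicit.
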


\begin{proof}
The possible tails of chains include the variables \( \omega_1, \ldots, \omega_n \) and the word \( \omega_2 \ldots \omega_{n-1} \). The structure described above provides an inductive characterization of the set of \( m \)-chains.
\end{proof}

\begin{lemma}
\label{lem::Anick::resol::cycle}
The Anick resolution for the algebra \( \OS(\Cyc_n) \) is minimal when \( n \geq 5 \).
\end{lemma}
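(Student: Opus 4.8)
The plan is to invoke the standard minimality criterion: a free resolution of the trivial module over a connected weight-graded augmented algebra is minimal exactly when its differential carries the resolution into the augmentation ideal times the free module, i.e. when every entry of each differential matrix has strictly positive weight. Here I use the weight grading on $\OS(\Cyc_n)$ by word length, each generator $\omega_i$ having weight $1$. Since the Gröbner basis of Lemma~\ref{lem::Grobner::Cycle} is weight-homogeneous (the commutativity relations have weight $2$ and the cycle relation~\eqref{eq::cycle::rel} has weight $n-1$), the whole Anick construction, and in particular the Anick differential together with the lower-order correction terms modifying the monomial differential, respects the weight grading. This homogeneity is the one structural input the argument rests on.

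First I would record the weight of a chain. The monomial part of the Anick differential, $1\otimes tf\mapsto t\otimes f$, always has coefficient $t$ of weight $\geq 1$, so it can never spoil minimality; the only danger is a correction term producing a scalar coefficient on an $(m-1)$-chain whose weight equals that of the ambient $m$-chain. Thus it suffices to show that no $m$-chain and no $(m-1)$-chain share a common weight. From the explicit description~\eqref{eq::Anick::chains}, a chain in homological degree $p$ built from $l$ full factors $(\omega_2\cdots\omega_n)$ has word length $w = p + l(n-3)$: indeed $p = k + \sum_{j} k_j + 2l$ while $w = k + \sum_j k_j + l(n-1)$, and subtracting gives $w - p = l(n-3)$.

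Next I would extract the arithmetic consequence. The identity $w - p = l(n-3)$ with $l\geq 0$ shows that any chain of weight $w$ in homological degree $p$ satisfies $p \equiv w \pmod{n-3}$. Hence all weights occurring in homological degree $m$ lie in the residue class $m \bmod (n-3)$, while those in degree $m-1$ lie in the class $(m-1)\bmod(n-3)$. For $n \geq 5$ we have $n - 3 \geq 2$, so these two residue classes are distinct, and therefore no $m$-chain can have the same weight as any $(m-1)$-chain. This is precisely the step where the hypothesis $n\geq 5$ enters.

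Finally, since the differential is weight-homogeneous, writing $d_m(1\otimes c) = \sum_j a_j \otimes c'_j$ with each $c'_j$ an $(m-1)$-chain forces $w(a_j) + w(c'_j) = w(c)$, hence $w(c'_j)\leq w(c)$; by the previous step $w(c'_j)\neq w(c)$, so in fact $w(c'_j) < w(c)$ and $w(a_j)\geq 1$, meaning every coefficient lies in the augmentation ideal. This yields minimality. The main point requiring care is the homogeneity of the \emph{full} differential: the correction terms are produced by reducing overlaps against the non-leading parts of the Gröbner relations, and it is exactly the weight-homogeneity of those relations that guarantees the corrections preserve weight. The collapse of the argument at $n=4$, where $n-3=1$ and the residue classes of consecutive degrees coincide, is what flags the exceptional behaviour of $\Cyc_4$.
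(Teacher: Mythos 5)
Your proof is correct and follows essentially the same route as the paper: both arguments observe that a chain in homological degree $p$ with $l$ factors $(\omega_2\cdots\omega_n)$ has $\omega$-weight $p+l(n-3)$, so consecutive homological degrees carry disjoint sets of weights once $n-3\geq 2$, and minimality follows from weight-homogeneity of the differential. Your added care about the correction terms respecting the grading is a welcome clarification but does not change the argument.
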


\begin{proof}
The relations in the algebra \( \OS(\Cyc_n) \) are homogeneous, so the differential in the Anick resolution preserves the grading. Let \( C_m^{(s)} \) denote the set of \( m \)-chains of \( \omega \)-grading equal to \( s \). According to Lemma~\ref{lem::Anick::Cycle}, we have:
\[
C_m = C_m^{(m)} \oplus C_m^{(m - 2 + (n-1))} \oplus C_m^{(m - 4 + 2(n-1))} \oplus \ldots 
= \bigoplus_{l = 0}^{\left\lfloor \frac{m+1}{2} \right\rfloor} C_m^{(m + l(n-3))},
\]
where the index \( l \) corresponds to the number of factors of the form \( (\omega_2 \ldots \omega_n) \) in~\eqref{eq::Anick::chains}.

In particular, \( C_m^{(s)} \neq 0 \) if and only if \( s \equiv m \mod (n-3) \). It follows that for \( n \geq 5 \), no two consecutive homological degrees contain chains with the same \( \omega \)-grading. Hence, the Anick resolution is minimal, as the differential preserves the \( \omega \)-grading.
\end{proof}

The Anick resolution of an algebra \( A \) is a resolution in the category of left \( A \)-modules. It is particularly well suited for constructing a basis of the coalgebra \( \mathrm{Tor}^A_{\bullet}(\mathsf{k}, \mathsf{k}) \) and its graded linear dual \( \mathrm{Ext}_A^{\bullet}(\mathsf{k}, \mathsf{k}) \). 
Recently, P.~Tamaroff introduced in~\cite{tamaroff2018minimal} a generalization of the Anick resolution to free resolutions in the category of algebras, in which Anick chains also index the generators. As a consequence, whenever the Anick resolution is minimal, the corresponding free resolution in Tamaroff’s construction is also minimal. Furthermore, the algebra structure on the extension groups can be read off from the quadratic part of the differential in this minimal free resolution. This leads to the following:

\begin{lemma}
The algebra \( \mathrm{Ext}^{\bullet}_{\OS(\Cyc_n)}(\mathbb{Q}, \mathbb{Q}) \) is a quadratic Koszul algebra generated by elements \( y_1, \ldots, y_n \) of degree \( 0 \), and an element \( u \) of degree \( n-3 \) (corresponding to the cyclic relation~\eqref{eq::cycle::rel}), subject to the quadratic relations:
\begin{equation}
\label{eq::cycl::Ext::rel}
\forall i \neq j \colon [y_i, y_j] = 0, \qquad \sum_{i=1}^n [y_i, u] = 0.
\end{equation}
\end{lemma}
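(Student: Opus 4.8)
The plan is to read off both the additive and the multiplicative structure of $E:=\mathrm{Ext}^{\bullet}_{\OS(\Cyc_n)}(\mathbb{Q},\mathbb{Q})$ from the minimal Anick resolution, upgraded to Tamaroff's minimal free resolution in the category of algebras. Since the Anick resolution is minimal for $n\geq 5$ (Lemma~\ref{lem::Anick::resol::cycle}), the bigraded space $\mathrm{Tor}^{\OS(\Cyc_n)}_{m,s}(\mathbb{Q},\mathbb{Q})$, and hence its graded dual $E^{m,s}$, has a basis indexed by the $m$-chains of $\omega$-weight $s$ listed in Lemma~\ref{lem::Anick::Cycle}, where I grade by homological degree $m$ and internal weight $s$ and record the \emph{reduced} degree $s-m$. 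First I would isolate the generators: the homological degree-$1$ chains are exactly $\omega_1,\dots,\omega_n$ (weight $1$), dual to $y_1,\dots,y_n$ of reduced degree $1-1=0$; the unique indecomposable chain in homological degree $2$ is the block $\omega_2\cdots\omega_n$ (weight $n-1$), dual to $u$ of reduced degree $(n-1)-2=n-3$. The shape of the chains in~\eqref{eq::Anick::chains} shows that every chain is an alternating word in the blocks $\omega_i$ and $(\omega_2\cdots\omega_n)$, so by the multiplicativity built into Tamaroff's construction these two families generate $E$ as an algebra.

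Next I would extract the relations from the quadratic part of the differential of the minimal algebra resolution, which is governed by the Gr\"obner data of Lemma~\ref{lem::Grobner::Cycle}. The skew-commutativity leading terms $\omega_j\omega_i$ for $i<j$ and $\omega_i^2$ yield the commuting relations $[y_i,y_j]=0$ among the degree-$0$ generators, while the single cyclic relation~\eqref{eq::cycle::rel}, with leading monomial $\omega_2\cdots\omega_n$, yields one relation coupling $u$ to the $y_i$. The combinatorial input is the set of $3$-chains obtained by prepending or appending one variable $\omega_i$ to the block $\omega_2\cdots\omega_n$: tracking the $s$-polynomial reductions already carried out in the proof of Lemma~\ref{lem::Grobner::Cycle} shows that the nontrivial overlaps assemble, with the coefficients dictated by~\eqref{eq::cycle::rel}, into precisely $\sum_{i=1}^n [y_i,u]=0$. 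This produces a surjection $E'\twoheadrightarrow E$ from the quadratic algebra $E'$ presented by~\eqref{eq::cycl::Ext::rel}.

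Finally I would prove that $E'\to E$ is an isomorphism and that $E$ is Koszul by a Hilbert-series comparison. Here $E'=\mathcal{U}(\mathfrak{g})$ for the graded Lie algebra $\mathfrak{g}=\mathsf{Lie}\langle y_1,\dots,y_n,u\mid [y_i,y_j]=0,\ \sum_i[y_i,u]=0\rangle$; I would exhibit a quadratic Gr\"obner basis for $E'$ (equivalently a PBW basis of $\mathcal{U}(\mathfrak{g})$), which simultaneously certifies Koszulness of $E'$ via the standard Gr\"obner criterion and computes its bigraded Hilbert series. Matching this series term by term with the Anick-chain count of the first step forces $\dim (E')^{m,s}=\dim E^{m,s}$ for all $m,s$, so the surjection is an isomorphism and $E$ is quadratic Koszul.

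I expect the last two steps to be the main obstacle. Because $\OS(\Cyc_n)$ is itself \emph{not} Koszul, the generator $u$ sitting off the diagonal at bidegree $(2,n-1)$, the Yoneda algebra $E$ could a priori fail to be quadratic; the genuine content is to show that no cubic or higher relations appear and that the coefficient pattern in $\sum_i[y_i,u]=0$ is exactly the one inherited from the cyclic relation~\eqref{eq::cycle::rel}. Controlling the signs in the quadratic part of Tamaroff's differential and carrying out the Hilbert-series bookkeeping that rules out further relations are where real care is required; the commuting relations $[y_i,y_j]=0$ and the generation statement are by comparison routine consequences of the chain description.
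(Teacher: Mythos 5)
Your proposal is correct and follows essentially the same route as the paper: generators and the additive structure are read off from the minimal Anick resolution, the relations come from the quadratic part of the differential, and the isomorphism with the quadratic algebra plus Koszulness are certified by a quadratic Gr\"obner basis for the presented algebra whose normal words biject with the Anick chains (your ``Hilbert-series comparison'' is exactly this count). The only cosmetic difference is that the paper pins down the coefficients in $\sum_i [y_i,u]=0$ by $\mathbb{Z}/n\mathbb{Z}$-symmetry and homogeneity rather than by tracking the $s$-polynomial reductions explicitly.
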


\begin{proof}
As a graded vector space, the algebra \( \mathrm{Ext}^{\bullet}_{\OS(\Cyc_n)}(\mathbb{Q}, \mathbb{Q}) \) is the graded linear dual of the coalgebra \( \mathrm{Tor}_{\bullet}^{\OS(\Cyc_n)}(\mathbb{Q}, \mathbb{Q}) \), with a basis indexed by Anick chains.
Since the Anick resolution is minimal and respects the \( \omega \)-grading, the multiplication in \( \mathrm{Ext} \)-algebra is determined by the quadratic component of the differential. In particular, the generators of the \( \mathrm{Ext} \)-algebra are exactly those Anick chains that cannot be decomposed as concatenations of two shorter chains (i.e., chains of lengths \( m_1 \) and \( m_2 \) with \( m_1 + m_2 = m \)).

From the description~\eqref{eq::Anick::chains}, we conclude that the elements \( y_i \) correspond to the dual basis of the set of \( 1 \)-chains, and the element \( u \) is dual to the unique \( 2 \)-chain \( \omega_2 \ldots \omega_n \) which is not a concatenation of two \( 1 \)-chains. These elements generate the \( \mathrm{Ext} \)-algebra.
The skew-commutativity of the elements \( \omega_i \) in \( \OS(\Cyc_n) \) implies commutativity of the dual generators \( y_i \). The relation \( \sum_{i=1}^n [y_i, u] = 0 \) follows directly from the invariance under the cyclic group \( \mathbb{Z}/n\mathbb{Z} \) (acting by automorphisms on the cyclic graph \( \Cyc_n \)), and from the homogeneity of the relation.

We order the generators as follows:
\[
y_1 > y_2 > \ldots > y_n > u
\]
and consider the algebra generated by \( y_i \)'s and \( u \) subject to the relations in~\eqref{eq::cycl::Ext::rel}. It is straightforward to verify that these relations form a quadratic Gr\"obner basis with respect to the degree-lexicographical order.
Moreover, there is a bijection between the set of normal words in this algebra and the set of Anick chains~\eqref{eq::Anick::chains}, where each occurrence of \( (\omega_2 \ldots \omega_n) \) is replaced by \( u \). Therefore, no additional relations arise in the multiplication, and the algebra \( \mathrm{Ext}^{\bullet}_{\OS(\Cyc_n)}(\mathbb{Q}, \mathbb{Q}) \) is indeed quadratic and Koszul.
\end{proof}

Finally, standard techniques from rational homotopy theory (see, e.g.,~\cite{felix2015rational}) allow us to conclude the following:

\begin{theorem}
\label{thm::cycl::homotopy}
For a cycle graph $\Cyc_n$ with $n \geq 5$, the Lie algebra of rational homotopy groups of the configuration space $\Conf_{\Cyc_n}(\mathbb{C})$, defined via Whitehead products, admits the following presentation:
\begin{equation}\label{eq::lie_structure_OS}
\pi_{\bullet+1}(\Conf_{\Cyc_n}(\mathbb{C})) \otimes \mathbb{Q} = 
\mathsf{Lie} \left\langle 
\begin{array}{c}
y^0_1, \ldots, y^0_n, \\
u^{n-3}
\end{array}
\left| 
\begin{array}{c}
[y_i, y_j] = 0, \\
\sum_{i=1}^{n} [y_i, u] = 0
\end{array}
\right.
\right\rangle
\end{equation}

Moreover, the minimal $L_{\infty}$-algebra contains a nontrivial higher Lie bracket:
\begin{equation}
\label{eq::L_inf_str_OS}
l_{n-1}(y_2, \ldots, y_n) = 
- l_{n-1}(y_1, y_3, \ldots, y_n) = 
\cdots = 
(-1)^n l_{n-1}(y_1, \ldots, y_{n-1}) = u.
\end{equation}
\end{theorem}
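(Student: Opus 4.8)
The plan is to transport everything to pure algebra via formality and then read off both the Lie bracket and the higher bracket from the structure already computed on the Yoneda algebra. Since $\Conf_{\Cyc_n}(\mathbb{C})$ is formal, its rational homotopy type is determined by $A:=\OS(\Cyc_n)$, and the homotopy Lie algebra $\mathfrak{g}_A=\pi_{\bullet+1}(\Conf_{\Cyc_n}(\mathbb{C}))\otimes\mathbb{Q}$ is the $L_\infty$-algebra whose universal enveloping is $\mathrm{Ext}^\bullet_A(\mathbb{Q},\mathbb{Q})$, as recalled above. Thus the theorem amounts to extracting $\mathfrak{g}_A$ from the bigraded algebra $\mathrm{Ext}^\bullet_A(\mathbb{Q},\mathbb{Q})$ while keeping track of the two gradings: the homological Ext-degree and the internal $\omega$-weight inherited from $A$. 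The homotopy degree of a class is the difference (internal weight) minus (Ext-degree), so the generators $y_i$ of bidegree $(1,1)$ sit in homotopy degree $0$ (that is, in $\pi_1$, matching $\PB_{\Cyc_n}\cong\mathbb{Z}^n$), while $u$ of bidegree $(2,n-1)$ sits in homotopy degree $n-3$.

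For the first, strict, part I would argue as follows. The preceding lemma identifies the associative algebra $\mathrm{Ext}^\bullet_A(\mathbb{Q},\mathbb{Q})$ as the quadratic Koszul algebra on $y_1,\dots,y_n,u$ with the commutator relations $[y_i,y_j]=0$ and $\sum_i[y_i,u]=0$. As all defining relations are commutators, this algebra is of PBW type and is the universal enveloping algebra of the strict graded Lie algebra presented by the very same generators and relations. Comparing the binary bracket $l_2$ of $\mathfrak{g}_A$ with the associative product under $\mathcal{U}(\mathfrak{g}_A)\cong\mathrm{Ext}^\bullet_A(\mathbb{Q},\mathbb{Q})$, the $l_2$-structure on $\mathfrak{g}_A$ is exactly $\mathsf{Lie}\langle y_i,u\mid [y_i,y_j]=0,\ \sum_i[y_i,u]=0\rangle$, which is Identity~\eqref{eq::lie_structure_OS}.

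For the higher bracket I would use that the $L_\infty$-operations $l_k$ on $\mathfrak{g}_A$ are dual to the higher-order part of the differential in the minimal resolution, equivalently to the word-length-$k$ component of the differential in the minimal Sullivan model of $A$. The only non-quadratic relation of $A$ is the cyclic Orlik--Solomon relation~\eqref{eq::cycle::rel}, and in the Anick/Tamaroff minimal resolution it is recorded by the distinguished $2$-chain $\omega_2\cdots\omega_n$ dual to $u$: beyond its quadratic part (which produced $\sum_i[y_i,u]=0$), the differential of this chain carries the lower-order component expressing $\omega_2\cdots\omega_n$ as the alternating sum of the remaining $(n-1)$-fold products $\omega_1\cdots\widehat{\omega_j}\cdots\omega_n$. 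Transposing this $(n-1)$-linear component through $\mathcal{U}(\mathfrak{g}_A)\cong\mathrm{Ext}^\bullet_A(\mathbb{Q},\mathbb{Q})$ yields the $(n-1)$-ary bracket with $l_{n-1}(y_2,\dots,y_n)=u$, the alternating signs of~\eqref{eq::cycle::rel} together with the cyclic relabeling giving the full chain of equalities~\eqref{eq::L_inf_str_OS}. That this bracket is genuinely nonzero is forced by weight bookkeeping: by minimality of the Anick resolution for $n\ge 5$ (Lemma~\ref{lem::Anick::resol::cycle}), chains in adjacent homological degrees carry distinct $\omega$-weights, so $u$ (weight $n-1$) is indecomposable and cannot be produced by $l_2$, which outputs only weight $2$ from two $y$'s; the unique operation of the correct bidegree carrying the $y_i$ to $u$ is thus the $(n-1)$-ary bracket.

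The step I expect to be the main obstacle is making $l_{n-1}$ canonical together with its signs --- proving that the coefficient of $u$ in $l_{n-1}(y_2,\dots,y_n)$ is exactly $\pm1$ and that the sign pattern of~\eqref{eq::L_inf_str_OS} is the correct one, since this forces one to track the Koszul signs through the transfer $\mathcal{U}(\mathfrak{g}_A)\cong\mathrm{Ext}^\bullet_A(\mathbb{Q},\mathbb{Q})$ and through the orientation and ordering conventions on the Anick chains. A robust cross-check, which I would run in parallel, is to build the word-length-$(n-1)$ part of the minimal Sullivan model of $A$ directly: a single generator $\xi$ dual to $u$ appears, the word-length-$(n-1)$ component of $d\xi$ expressed in the generators dual to the $y_i$ must reproduce~\eqref{eq::cycle::rel}, and dualizing $d\xi$ then simultaneously recovers $l_{n-1}$ and fixes every sign.
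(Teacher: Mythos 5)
Your proposal follows essentially the same route as the paper: formality plus nilpotence reduces everything to the minimal (Sullivan/Tamaroff--Anick) resolution of $\OS(\Cyc_n)$, the commutator form of the relations in $\mathrm{Ext}^{\bullet}_{\OS(\Cyc_n)}(\mathbb{Q},\mathbb{Q})$ gives the strict Lie presentation via $\mathcal{U}(\mathfrak{g})$, and the higher bracket $l_{n-1}$ is read off by dualizing the word-length-$(n-1)$ component of the differential on the generator dual to $u$, which records the cyclic Orlik--Solomon relation. Your explicit weight-bookkeeping argument for the nonvanishing and indecomposability of $l_{n-1}$ is a welcome elaboration of what the paper leaves implicit in the minimality of the Anick resolution for $n\geq 5$, and your flagged concern about signs is exactly the remaining routine verification.
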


\begin{proof}
As shown above, for $n \geq 5$, the fundamental group of $\Conf_{\Cyc_n}(\Cmplx)$ is abelian, and the space is nilpotent.
Furthermore, according to~\cite{lyskov2023contractads}, graphical configuration space $\Conf_{\Gamma}(\Cmplx)$ is formal for all graphs $\Gamma$, meaning that the Orlik–Solomon algebra $\OS(\Gamma)$ is quasi-isomorphic to the cohomology of the De Rham complex.
As a result, the generators of the minimal free resolution of $\OS(\Cyc_n)$ in the category of (skew-)commutative algebras fully determine the rational homotopy $L_{\infty}$-algebra $\pi_{\bullet+1}(X) \otimes \mathbb{Q}$. Concretely, we have 
\(
\pi_{\bullet+1}(X) \cong (V^{\bullet+1})^{\vee},
\)
and the $L_{\infty}$-structure is given by $l_n := d_n^{\vee}$, where $d_n \colon V \to \Lambda^n V$ is the component of the differential of weight $n$.
In particular, the quadratic component $d_2^{\vee}$ defines the Whitehead product.
Moreover, the algebra $\mathrm{Ext}_{\OS(\Cyc_n)}^{\bullet}(\mathbb{Q}, \mathbb{Q})$ serves as the universal enveloping algebra of the corresponding Lie algebra of rational homotopy groups. Since its relations~\eqref{eq::cycl::Ext::rel} are given in terms of commutators, the corresponding Lie algebra has the same generators and relations.

To complete the proof, we analyze the differential for the generator $u$, which arises from the cyclic relation~\eqref{eq::cycle::rel}. This differential captures the first nontrivial higher Massey products, as reflected in the higher bracket $l_{n-1}$.
\end{proof}

\section{Logarithmic geometry and Formality}
\label{sec::logformality} 
In~\cite{vaintrob2021formality, dupont2024logarithmic}, the authors proved the formality of (framed) little $2$-disks operad using the methods of logarithmic geometry and mixed Hodge structures. The crucial part of their proofs is to endow the operad of moduli spaces of stable curves $\beM_{0,n+1}$ with the logarithmic structure such that its Kato-Nakayama realization is a (framed) $E_2$-operad. In this section, we explain how to extend their results to the contractad setting. We briefly recall the basics of logarithmic geometry and construct $\psi$-classes for graphical compactifications needed for the logarithmic structure.

\subsection{Psi-classes and Normal bundles}
\label{sec:sub:psi:classes}
For the Deligne-Mumford moduli space $\beM_{0,n+1}$, there is the collection of line bundles $\La_{\infty},\La_1,\cdots,\La_n$ whose fibers over a stable curve $[\Sigma]$ are cotangent lines at its marked points, and their first Chern classes, called $\psi$-classes, play a significant role in the intersection theory. We refer to~\cite{zvonkine2012introduction} for detailed treatments. In this section, we construct their graphical analogues.
\subsubsection{$h$-classes and divisors} 
The graphical compactification $\bM(\Gr)$ was defined in~\cite{khoroshkin2024hilbert} and recalled in Section~\ref{sec::wonerful_contractad}. The second cohomology group $H^2(\bM_{\mathbb{C}}(\Gr))$ is spanned by the Poincare dual classes of canonical divisors $[D_G]$ with only linear relations
\[
\sum_{G: e\subset G} [D_G]=\sum_{G': e'\subset G'} [D_{G'}], \text{ for each pair of edges }e,e'\in E_{\Gr}.\label{eq::linearelforbM}
\] 
For a tube $G$, let $\xi_G$ be the pullback of the line bundle $\mathcal{O}_{\Pro}(1)$ along the projection $\rho_G: \bM(\Gr) \rightarrow \Pro(W_{\Gr}/H_G)$, and we denote by $h_G=c_1(\xi_G)$ its first Chern class. In particular, for an edge $e$, we have $h_e=0$ for dimension reasons. In~\cite[Lemma~4.1.7]{khoroshkin2024hilbert}, it was shown the divisor expression of $h$-classes
\[
h_G = \sum_{K: e\subset K, G\not\subset K} [D_K],
\] where the sum ranges over all proper tubes $K$ that contain fixed edge $e$ and do not contain $G$. This formula is valid for any edge $e\subset G$.

\begin{proposition}\label{prop::divisor_h} We have
\[
[D_G]=-\sum_{H:G\subset H\subset \N(G)} (-1)^{|H|-|G|} h_H,
\] where $\N(G):=\{v| \exists w\in G\colon (v,w)\in E_{\Gr}\}$ is the neighborhood of a tube.
\end{proposition}
\begin{proof}
For a graph $\Gr$, let $2^{\Gr}$ be the poset of non-empty tubes of $\Gr$ ordered by inclusion. By verifying the properties of M\"obius functions for posets, we see that the function $\mu_{\Gr}\colon 2^{\Gr}\times 2^{\Gr}\to \mathbb{Z}$ is given by
\[
\mu_{\Gr}(G,H):=\begin{cases}
    (-1)^{|H|-|G|}\text{, if } G\subset H\subset \mathcal{N}(G)
    \\
    0\text{, otherwise.}
\end{cases} 
\] is the M\"obius function for poset $2^{\Gr}$.
Next, we define formal element $[D_{\Gr}]$ given by the linear relations $[D_{\Gr}]=-\sum_{e\subset H} [D_H]$ that is independent on the choise of edge $e$. Using this notation, we can rewrite class $h_G$ in the following way
\[
h_G=-\sum_{H: G\subset H} [D_H].
\] So, by M\"obius inversion formula, we obtain the desired formula
\[
[D_G]=-\sum_{G\subset H} \mu_{\Gr}(G,H)h_H=-\sum_{H:G\subset H\subset \mathcal{N}(G)} (-1)^{|H|-|G|} h_H.
\]
\end{proof} The advantage of $h$-classes is their compatibility with the contractad structure~\cite[Prop~4.1.12]{khoroshkin2024hilbert}
\[
(\circ^{\Gr}_G)^*h_H= \begin{cases}
1\otimes h_H\text{, if }H\subset G;
\\
h_{H/G}\otimes 1\text{, else,}
\end{cases}
\] and graph inclusions: for a graph inclusion $\iota\colon \Omega\to \Gr$, we have $\pi_{\iota}^*h_{G}=h_{\iota(G)}$, where $\pi_{\iota}\colon \bM(\Gr)\to \bM(\Omega)$ the induced morphism.

\subsubsection{Psi-classes for Modular compactifications} 
We recall the construction of the moduli space $\beM_{0,\K_{\lambda}}\cong \bM(\K_{\lambda})$ of $\K_{\lambda}$-stable curves from Section~\ref{sec::modular}. We consider the associated universal curve $\bC_{0,\K_{\lambda}}\to\beM_{0,\K_{\lambda}}$. Analogously to~\cite{zvonkine2012introduction}, we define the relative cotangent bundle $\La$ over the universal curve, whose restriction to the $\Gr$-stable curve $\La|_{\Sigma}$ is the line bundle of abelian differentials.  

\begin{definition}[Geometric Psi-classes]
 For $v\in V_{\K_{\lambda}}\cup \{\infty\} $, let $\La_v$ be the line bundle over $\beM_{0,\K_{\lambda}}$ defined as pullback of $\La$ through section $x_v\colon\beM_{0,\K_{\lambda}}\to \bC_{0,\K_{\lambda}}$. More precisely, the fiber at each point $[\Sigma]\in \beM_{0,\K_{\lambda}}$ is the cotangent line to $\Sigma$ at the marked point $x_v$. Denote by $\psi_v=c_1(\La_v)$ the first Chern class of this bundle.
\end{definition}

There is the divisor expression of $\psi$-classes on $\beM_{0,\K_{\lambda}}$ for complete multipartite graph $\K_{\lambda}$.
\begin{proposition}[Presentation of $\psi$-classes]\label{prop:prespsi} 
For any given edge $e\in E_{\K_{\lambda}}$ connecting vertices $v$ and $w$ we have
\begin{gather*}
 \psi_v=\sum_{G: v\in G, w\not\in G} [D_G], \\
 \psi_{\infty}=h_{\K_{\lambda}}=\sum_{G\supset e} [D_G].
\end{gather*}
\end{proposition}
\begin{proof} The proof repeats the ones of~\cite[Prop~2.13]{zvonkine2012introduction}. For a pair of adjacent vertices $w,u\in V_{\Gr}\cup \{\infty\}$ ( we consider $\infty$ as an additional vertex adjacent to all others), we define the meromorphic section $\alpha_{w,u}$ of $\La$ as follows. Recall that, for a stable curve $\Sigma$ of genus zero and two distinct smooth points $p,p'$, there is a unique meromorphic abelian form $\alpha$, that has two simple poles at points $p,p'$ with opposite residues $1$ and $-1$ respectively. Also, this form is non-zero only  at smooth points of the chain connecting components that contain given points. For a $\K_{\lambda}$-stable curve $\Sigma$, we let $\alpha_{v,w}(\Sigma)$ be the corresponding form for $p=x_w, p'=x_u$. The union  $\cup_{\Sigma} \alpha_{w,u}(\Sigma)$ of these forms over each stable curve is the section $\alpha_{w,u}$ of $\La$ over the whole universal curve $\bC_{0,\K_{\lambda}}$. 

Consider a vertex $v$ that is adjacent to both $w$ and $u$. In particular, for a stable curve, the marked point $x_v$ is not a pole of $\alpha_{wu}$ and $x_v$ is a zero if it is outside the chain connecting the components that contain marked points $x_w$ and $x_u$.  Hence, the composition $\beta_{wu}=\alpha_{wu}\circ x_v$ defines a holomorphic section of $\La_v$, so $\psi_v$ is presented by the zero divisor of $\beta_{wu}$. In the case $u=\infty$, the irreducible components of this divisor are $D_G$ for tubes $G$ with $v\in G, w\not\in G$. In the case $v=\infty$, the zero divisor is the union of $D_G$ for tubes containing the edge $e=(w,u)$.
\end{proof}  We consider the following example
\begin{example}[Psi-classes on Losev-Manin moduli spaces]\label{ex::psi_losevmanin} According to Example~\ref{ex::Wonderful}, for the graph $\St_n:=\K_{(n,1)}$, the compactification $\beM_{0,\St_n}$ coincides with the Losev-Manin moduli space of stable chains $\overline{\EuScript{L}}_{0,n}$. Thanks to Proposition~\ref{prop:prespsi}, the only non-trivial $\psi$-classes are $\psi_0$ and $\psi_{\infty}$, since all other vertices are leaves. Thanks to Proposition~\ref{prop::divisor_h}, we rewrite $\psi_0$ in terms of $h$-classes.
\[
\psi_0=\sum_{0\in G} (-1)^{|G|-1}h_G,
\] where the sum ranges over all tubes containing center $0$.
\end{example}

\subsubsection{Graphical Psi-Classes in general} 
Example~\ref{ex::psi_losevmanin} suggests how to define $\psi$-classes for a general graphical compactification. For a graph $\Gr$, let $\St(v)$ be the stellar graph on the vertex set $\N(v)$ with the center $v$. The graph-inclusion $\St(v)\hookrightarrow \Gr$ produces a projection
\[
\pi_v\colon \bM(\Gr)\twoheadrightarrow \bM(\St(v))\cong \overline{\EuScript{L}}_{0,\deg(v)}. 
\]
\begin{definition} For a vertex $v$, the line bundle $\mathcal{L}_v$ over $\bM(\Gr)$ is the pullback $\pi_v^*\La_0$ along the projection above. In the case $v=\infty$, we let $\La_{\infty}:=\xi_{\Gr}$. We let $\psi_{\infty}=h_{\Gr}$ and $\psi_v$ be the corresponding first Chern classes.
\end{definition} Thanks to Example~\ref{ex::psi_losevmanin} and compatibility of $h$ classes with graph inclusions, we have the following $h$-presentation of $\psi$-classes
\begin{equation}
  \psi_v=\sum_{H\colon v\in H\subset \N(v)} (-1)^{|H|-1}h_H.  
\end{equation} We refer to Proposition~\ref{prop:prespsi}, to verify that in the case of a complete multipartite graph, both definitions of $\psi$-classes coincide.

From the examination of pullback bundles along contractad maps, we see 
\begin{proposition}[Compatibility with Operadic structure]\label{prop::psi_comp} Line bundles $\La_{v}$ are well-behaved with respect to contractad structure
\[
(\circ_G^{\Gr})^*\mathcal{L}_v\cong \begin{cases}
\mathcal{L}_v\otimes 1\text{, if } v\not\in G\text{ or } v=\infty
\\
1\otimes \mathcal{L}_v\text{, if } v\in G
\end{cases}
\]
\end{proposition}
For $\beM_{0,n+1}$, the normal bundles of canonical divisors are expressed in terms of $\psi$-classes. A similar holds in the graphical case 
\begin{proposition}[Self-intersection formula]\label{prop::normalbundle}
For a divisor $D_G\cong \bM(\Gr/G)\times\bM(\Gr|_G)\subset \bM(\Gr)$, its normal bundle $\N_G$ is isomorphic to
\[
\mathcal{N}_G\cong \mathcal{T}_{\{G\}}\otimes \mathcal{T}_{\infty},
\] where $\mathcal{T}_{\{G\}}=\mathsf{pr}_{\Gr/G}^*\La^{\vee}_{\{G\}}$ and $\mathcal{T}_{\infty}=\mathsf{pr}_{\Gr|_G}^*\La^{\vee}_{\infty}$ are the pullbacks of dual line bundles 
\end{proposition}
\begin{proof} Since $H^1(\bM(\Gr))=0$, a holomorphic line bundle is uniquely determined by its first Chern class. For a divisor $\iota\colon Y\subset X$, we have $c_1(\mathcal{N}_{Y\subset X})=\iota^*[Y]$, so, it suffices to prove the identity $(\circ^{\Gr}_G)^*[D_G]=-\psi_{\{G\}}\otimes 1-1\otimes \psi_{\infty}$. The desired formula follows from explicit computations
\begin{multline*}
  (\circ^{\Gr}_G)^*([D_G])=-\sum_{H:G\subset H\subset \mathcal{N}(G)} (-1)^{|H|-|G|} (\circ^{\Gr}_G)^*(h_H)=-1\otimes h_G-\sum_{H:G\subsetneq H\subset \mathcal{N}(G)} (-1)^{|H|-|G|} h_{H/G}\otimes 1 =\\= -1\otimes \psi_{\infty}-\sum_{K:\{G\}\in K\subset \mathcal{N}_{\Gr/G}(\{G\})} (-1)^{|K|-1} h_{K}\otimes 1=-1\otimes\psi_{\infty}-\psi_{\{G\}}\otimes 1.
\end{multline*}
\end{proof}

\subsection{Logarithmic geometric models}
\label{sec:sub:log:geom}
\subsubsection{Log schemes}
We briefly recall necessary notions from (virtual) logarithmic geometry and refer to~\cite{dupont2024logarithmic} for details.

A \emph{log scheme} $\X=(X,\M_X)$ is a scheme $X$ endowed with a sheaf of monoids $\mathcal{M}_X$ on $X$ in etale topology and a morphism of sheaf monoids $\alpha_X\colon \mathcal{M}_X\to \Orb_X$ such that the induced morphism $\alpha_X^{-1}\Orb_X^{\times}\to \Orb_X^{\times}$ is an isomorphism. We consider the following examples
\begin{itemize}
    \item For a scheme $X$, we could consider the trivial log scheme structure $\X^{\mathrm{triv}}$ with $\M_X=\Orb_X^{\times}$.
    \item The log-point is the log scheme $*_{\mathrm{log}}=(\Spec(\mathsf{k}),
    \mathsf{k}^{\times}t^{\mathbb{N}})$, where $\mathsf{k}^{\times}t^{\mathbb{N}}$ is a free monoid with $\alpha$ is the evaluation at $t=0$.
    \item For a pair $(X,D)$ of a scheme $X$ and a divisor $D\subset X$ with a normal crossing, we consider the divisorial log scheme $\M_{X,D}$ whose sections are regular functions on $X$ that are invertible out of $D$.
    \item More generally, for a triple $(X,D,\E)$ of a scheme $X$, a divisor $D$, and a vector bundle $\E$ with a splitting into line bundles $\E=\oplus^n_{i=1} \La_i$, the \emph{Deligne-Faltings log scheme} is the scheme $X$ with a log-structure $\M_{X,D,\E}$ whose sections are monomial  functions on $\E$, i.e., regular functions on $\E$ given in local coordinates by $g(x)\prod^n_{i=1, a_i\in \mathbb{N}} t^{a_i}_i$, that are invertible on $\E^{\circ}|_{X\setminus D}$, there $\E^{\circ}=\La^{\times}_1\underset{X}{\times}\La^{\times}_2\underset{X}{\times}\cdots\underset{X}{\times}\La^{\times}_n$ is the associated $(\mathbb{G}_m)^n$-bundle, with $\alpha\colon \M_{X,D,\E}\to \Orb_X$ given by restriction to zero locus.
\end{itemize} An \emph{ordinary} morphism of log schemes $\phi\colon (X,\M_X)\to (Y,\M_{Y})$ is a morphism of schemes $\phi\colon X\to Y$ with a morphism of sheaves $\phi^*\colon \phi^{-1}\M_Y\to \M_X$ compatible with structure morphisms $\alpha_X, \alpha_Y$. A \emph{virtual} morphism is a morphism of schemes $\phi\colon X\to Y$ with an extension of $\phi^*\colon \phi^{-1}\Orb_Y^{\times}\to \Orb_X^{\times}$ to a morphism between group completions of log sheaves $\phi^*\colon \phi^{-1}\M_Y^{\mathrm{gp}}\to \M_X^{\mathrm{gp}}$.
\begin{example}\label{ex::logmorphisms} We consider particular examples of ordinary/virtual morphisms needed soon 
\begin{itemize}
    \item[(log point)] A log point $*_{\log}$ admits a multiplication $*_{\log}\times *_{\log}\to *_{\log}$ given by $(t^i,t^j)\mapsto t^{i+j}$, that is an ordinary morphism, while the inverse map $*_{\log}\to *_{\log}, t\mapsto t^{-1}$ and the unit $*\to *_{\log}$ are only virtual morphisms. So, $*_{\log}$ is a group object in log schemes with virtual morphisms.
    \item[(Monomial maps)]  For a pair of of splitting bundles $\E,\E'$, an ordinary morphism $(X,D,\E)\to (X,D,\E')$ over $X$ is a monomial map $\E\to \E'$ (maps monomial functions to monomial) which maps $X$ to $X'$ and whose restriction to the complement $X\setminus D$ maps $\E^{\circ}|_{X\setminus D}$ to $\E^{'\circ}|_{X\setminus D}$. A virtual morphism is just a monomial map $\E^{\circ}|_{X\setminus D}\to\E^{'\circ}|_{X\setminus D}$.
    \item[(Pairings)] A pairing of line bundles $\langle-,-\rangle\colon \La_1\otimes \La_2\to \N$ over $X$ that is a non-degenerate over $X\setminus D$, define an ordinary morphism $(X,D,\La_1\oplus\La_2)\to (X,D,\N)$. 
    \item[(Inverting)] For a line bundle $\La$ over $X$, inverting of non-zero sections induces a virtual isomorphism $(X,\La)\overset{\cong_{virt}}{\rightarrow} (X,\La^{\vee})$.
    \item[(Pullback)] For a divisor $\iota\colon D\subset X$, and splitting bundle $\E$, the pullback log structure on $D$ is $\iota^*(X,\E)=(D,N\oplus \iota^*\E)$, where $N$ is the normal bundle.
    \item[(Trivialising)] If a line bundle $\La$ is trivial over the complement $X\setminus D$, we have the virtual isomorphism $(X,D,\La)\cong_{virt} (X,D)\times *_{\log}$.
\end{itemize}
\end{example}
\subsubsection{Logarithmic little disks} For a graph $\Gr$, we consider the Deligne-Faltings log schemes
\[
\frakFM^{\fr}(\Gr):=(\bM(\Gr),\partial\bM(\Gr), \bigoplus_{v\in V_{\Gr}\cup \{\infty\}}\mathcal{T}_v), \quad \frakFM(\Gr):=(\bM(\Gr),\partial\bM(\Gr), \mathcal{T}_{\infty}),
\] where $\mathcal{T}_v:=\La^{\vee}_v$. For $\Gr=\Path_1$, we put $\frakFM^{\fr}(\Path_1)=*_{\log}$ and $\frakFM(\Path_1)=*$.
\begin{proposition} The contractad structure on $\bM$ lifts to the contractad structure on $\frakFM^{\fr}$ in the category of log schemes with ordinary morphisms.
\end{proposition}
\begin{proof} Consider the contractad morphism $\circ^{\Gr}_G\colon \bM(\Gr/G)\times \bM(\Gr|_G)\to \bM(\Gr)$. To lift this morphism to a morphism of log schemes, we need to define an ordinary morphism  $\frakFM^{\fr}(\Gr/G)\times \frakFM^{\fr}(\Gr|_G)$ to the pullback log-structure $(\circ^{\Gr}_G)^*\frakFM^{\fr}$. According to Example~\ref{ex::logmorphisms}, $(\circ^{\Gr}_G)^*\frakFM^{\fr}$ is also a Deligne-Faltings log scheme on the product $\bM(\Gr/G)\times \bM(\Gr|_G)$, with splitting bundle   $\E'=\N_G\oplus (\circ^{\Gr}_G)^*(\mathcal{T}_{\infty}\oplus\bigoplus_{v\in V_{\Gr}}\mathcal{T}_v)$. Thanks to Proposition~\ref{prop::psi_comp} and Proposition~\ref{prop::normalbundle}, we have $\E'\cong (\mathcal{T}_{\{G\}}^{\Gr/G}\otimes \mathcal{T}_{\infty}^{\Gr|_G})\oplus (\mathcal{T}_{\infty}^{\Gr/G}\oplus \bigoplus_{v\not\in G} \mathcal{T}_{v}^{\Gr/G}) \oplus (\bigoplus_{v\not\in G} \mathcal{T}_{v}^{\Gr|_G})$. Therefore, we are seeking an ordinary monomial map.  
\[
 (\mathcal{T}_{\{G\}}^{\Gr/G}\oplus \mathcal{T}_{\infty}^{\Gr|_G})\oplus (\mathcal{T}_{\infty}^{\Gr/G}\oplus \bigoplus_{v\not\in G} \mathcal{T}_{v}^{\Gr/G}) \oplus (\bigoplus_{v\not\in G} \mathcal{T}_{v}^{\Gr|_G})\to \E'= (\mathcal{T}_{\{G\}}^{\Gr/G}\otimes \mathcal{T}_{\infty}^{\Gr|_G})\oplus (\mathcal{T}_{\infty}^{\Gr/G}\oplus \bigoplus_{v\not\in G} \mathcal{T}_{v}^{\Gr/G}) \oplus (\bigoplus_{v\not\in G} \mathcal{T}_{v}^{\Gr|_G}).
\] We define it by the quadratic monomial map $\mathcal{T}_{\{G\}}^{\Gr/G}\oplus \mathcal{T}_{\infty}^{\Gr|_G}\to \mathcal{T}_{\{G\}}^{\Gr/G}\otimes \mathcal{T}_{\infty}^{\Gr|_G}\cong \N_G$ and identity maps on the remaining summands.
\end{proof}

The open part $\M(\Gr)$ is identified with the moduli space of $\{\infty\}\cup V_{\Gr}$-pointed smooth genus zero curves with edge-non-intersecting conditions
\[
\M(\Gr)\cong \Pro(\Conf_{\Gr}(\mathbb{C})/\mathbb{G}_{a})\cong \Conf_{\Gr}(\mathbb{C})/\Aff(\mathbb{C})=\Conf_{\Gr}(\Pro^1\setminus \{\infty\})/(\PGL_2)_{\infty}\cong \Conf_{\mathsf{C}_{\infty}\Gr}(\Pro^1)/\PGL_2
\] The restriction of the line bundles $\{\mathcal{T}_v\}_{v\in \Cyc_{\infty}\Gr}$ to $\M(\Gr)$ coincide with tangent lines at the corresponding marked points. For a pair of distinct points $p,q\in \Pro^1$, authors~\cite[Sec.~8.1]{dupont2024logarithmic} define a canonical parallel transport $s_{p,q}\colon (T_p\Pro^1)^{\times}\to (T_{p'}\Pro^1)^{\times}$, in the case $p=\infty$, we have $s_{\infty,q}\colon \lambda \partial_{1/z}|_{\infty}\mapsto \lambda^{-1}\partial_{z}|_q$. This procedure produces an isomorphism of $\mathbb{G}_m$-fibrations $s_{\infty,v}\colon \mathcal{T}^{\circ}_{\infty}\to \mathcal{T}^{\circ}_{v}$ over $\M(\Gr)$. In particular, it defines a virtual isomorphism of log schemes
\begin{equation}
  s_{\infty,v}\colon(\bM(\Gr),\partial\bM(\Gr), \mathcal{T}_{\infty})\to (\bM(\Gr),\partial\bM(\Gr), \mathcal{T}_{v})  
\end{equation}
This procedure leads to the contractad structure on the unframed version $\frakFM$.
\begin{proposition} The contractad structure on $\bM$ and parallel transport induces the contractad structure on $\frakFM$ in the category of log schemes with virtual morphisms.
\end{proposition} 
\begin{proof} Similarly to the framed case, to define a contractad structure, it remains to construct a virtual monomial map
\[
\mathcal{T}^{\Gr/G}_{\infty}\oplus \mathcal{T}^{\Gr|_G}_{\infty}\to \N_G\oplus \mathcal{T}^{\Gr/G}_{\infty}\cong \mathcal{T}^{\Gr/G}_{\infty}\oplus (\mathcal{T}^{\Gr/G}_{\infty}\otimes \mathcal{T}^{\Gr|_G}_{\infty}).
\] We define it by combining identity on $\mathcal{T}^{\Gr/G}_{\infty}$ and the composition $\mathcal{T}^{\Gr/G}_{\infty}\oplus \mathcal{T}^{\Gr|_G}_{\infty}\overset{s_{\infty,\{G\}}}{\longrightarrow}\mathcal{T}^{\Gr/G}_{\{G\}}\oplus \mathcal{T}^{\Gr|_G}_{\infty}\to \mathcal{T}^{\Gr/G}_{\{G\}}\otimes \mathcal{T}^{\Gr|_G}_{\infty}$.
\end{proof} The canonical line bundles $\{\Ta_v\}_{v\in V_{\Gr}\cup \{\infty\}}$ are trivial over $\M(\Gr)$, so we have a virtual decomposition $\frakFM(\Gr)\cong (\bM(\Gr),\partial\bM(\Gr))\times *_{\log}$ and $\frakFM^{\fr}(\Gr)\cong (\bM(\Gr),\partial\bM(\Gr))\times *_{\log}^{\times V_{\Gr}\cup \{\infty\}}$. Thanks to Example~\ref{ex::logmorphisms}, $*_{\log}$ is a group object in log schemes with virtual morphisms, hence $\frakFM$ forms a contractad in the category of $*_{\log}$-spaces with respect to the decomposition above. We consider the semidirect contractad $\frakFM\rtimes *_{\log}$. Note that parallel transports $s_{\infty,v}$, for each vertex $v\in V_{\Gr}$, induce an isomorphism of contractads
\begin{equation}\label{eq::log_semidirect}
     \frakFM\rtimes *_{\log}\to \frakFM^{\fr}
\end{equation}
  in log schemes with virtual morphisms. All in all, we recover and generalise the statement~\cite[Pr.~8.18]{dupont2024logarithmic}
\begin{proposition}
Parallel transports induce the isomorphism of contractads
$\frakFM^{\fr}\cong \frakFM\rtimes *_{\log}$ in the category of log schemes with virtual morphisms.
\end{proposition}

\subsection{Kato-Nakayama realisation} 
\label{sec:sub:KN}
With a fine saturated\footnote{We refer to~\cite[Sec.~1.7]{nakayama1997logarithmic} for a precise definition} log scheme $\X=(X,\M_X)$ over $\mathbb{C}$, we associate a topological space $\KN(\X)$, called Kato-Nakayama realisation~\cite{kato1999log}. A point in $\KN(X)$ is a pair $(x,\lambda)$, where $x\in X(\mathbb{C})$ is a complex point and monoid map $\lambda_x\colon \M_{X,x}\to S^1\subset \mathbb{C}^{\times}$, such that $\lambda_x(f)=\frac{f(x)}{|f(x)|}$, for $f\in \Orb_X^{\times}$. It was shown in~\cite[Prop.~4.2]{dupont2024logarithmic}, that Kato-Nakayama realisation defines a strict monoidal functor from the category of fine saturated schemes with virtual morphisms
\[
\KN\colon \mathsf{virtLogSchm}_{\mathbb{C}}^{\mathrm{fs}}\to \Top.
\]
\begin{itemize}
    \item For a trivial scheme $\X^{\mathrm{triv}}$, we have $\KN(\X^{\mathrm{triv}})=X(\mathbb{C})$
    \item For a log-point, we have $\KN(*_{\log})\cong S^1$ as topological groups.
    \item For a divisorial log scheme $(X,D)$, the Kato-Nakayama realisation $\KN(X,D)=\mathsf{Bl}_D^{\mathbb{R}}(X(\mathbb{C}))$ is the real oriented blow-up of $X(\mathbb{C})$.
    \item More generally, for a Deligne-Faltings log scheme $(X,D,\oplus^k_{i=1}\La_i)$, its KN realisation is the product of principal $S^1$-fibrations $\Sph(p^*\La_1)\underset{\mathsf{Bl}_D^{\mathbb{R}}(X(\mathbb{C}))}{\times}\Sph(p^*\La_2)\underset{\mathsf{Bl}_D^{\mathbb{R}}(X(\mathbb{C}))}{\times}\cdots\underset{\mathsf{Bl}_D^{\mathbb{R}}(X(\mathbb{C}))}{\times} \Sph(p^*\La_k)$.
\end{itemize} Since $\KN$ is strict monoidal, it sends log scheme contractads to topological contractads. We consider the corresponding topological contractads
\[
\ufrM:=\KN(\frakFM), \quad \frM:=\KN(\frakFM^{\fr})
\]

Consider the oriented blow-up $p\colon \bM^{\mathrm{or}}(\Gr)\to \bM(\Gr)$ along the normal divisor $\partial\bM(\Gr)$. So, the components of the contractads above are products of the principal $S^1$-fibrations associated with pullbacks $p^*\mathcal{T}_v$
\[
\ufrM(\Gr)=\mathbb{S}(p^*\mathcal{T}_{\infty}),\quad \frM(\Gr)=\mathbb{S}(p^*\mathcal{T}_{\infty})\underset{\bM^{\mathrm{or}}}{\times} \mathbb{S}(p^*\mathcal{T}_{v_1})\underset{\bM^{\mathrm{or}}}{\times}\cdots \underset{\bM^{\mathrm{or}}}{\times} \mathbb{S}(p^*\mathcal{T}_{v_n})
\]
\begin{example} To make some intuition, we consider the modular case. According to Section~\ref{sec::modular}, for a complete multipartite graph $\K_{\lambda}$, via the isomorphism $\bM(\K_{\lambda})\cong \beM_{0,\K_{\lambda}}$, the oriented blow-up $\Bl_{\partial\beM}(\beM_{0,\K_{\lambda}})$ consists of equivalence classes of stable $\K_{\lambda}$-pointed curves $\Sigma$ with real rays in the tensor product $T_p\Sigma'\otimes T_p\Sigma''$ at every double point $p\in \Sigma$. In the framed case $\widetilde{\EuScript{M}}^{\fr}_{0,\K_{\lambda}}$, we also add tangent real rays at each marked point $x_v$, while in the unframed case $\widetilde{\EuScript{M}}_{0,\K_{\lambda}}$ we only add a ray at the marked point $x_{\infty}$.
\begin{figure}[ht]
\[
    \vcenter{\hbox{\begin{tikzpicture}[scale=0.5]
    \draw (0,0) circle (1);
    \draw (2,0) circle (1);
    \node at (1,0) {\small$\bullet$};
    \node [left] at (-1,0) {\small$\infty$};
    \draw (-1.1,0) -- (-0.9,0);
    \node [above] at (0,1) {\small$1$};
    \draw (0,1.1) -- (0,0.9);
    \node [below] at (2,-1) {\small$2$};
    \draw (2,-0.9) -- (2,-1.1);
    \node [right] at (3,0) {\small$3$};
    \draw (2.9,0) -- (3.1,0);
    \end{tikzpicture}}} \quad 
    \vcenter{\hbox{\begin{tikzpicture}[scale=0.5]
    \draw (0,0) circle (1);
    \draw (2,0) circle (1);
    \node at (1,0) {\small$\bullet$};
    \node [left] at (-1,0) {\small$\infty$};
    \draw (-1.1,0) -- (-0.9,0);
    \node [above] at (0,1) {\small$1$};
    \draw (0,1.1) -- (0,0.9);
    \node [below] at (2,-1) {\small$2$};
    \draw (2,-0.9) -- (2,-1.1);
    \node [right] at (3,0) {\small$3$};
    \draw (2.9,0) -- (3.1,0);
    \draw[->, > = latex] (-1,0)--(-1,1);
    \draw[->, > = latex] (1,0)--(1,-1);
    \end{tikzpicture}}} \quad 
    \vcenter{\hbox{\begin{tikzpicture}[scale=0.5]
    \draw (0,0) circle (1);
    \draw (2,0) circle (1);
    \node at (1,0) {\small$\bullet$};
    \node [left] at (-1,0) {\small$\infty$};
    \draw (-1.1,0) -- (-0.9,0);
    \node [above] at (0,1) {\small$1$};
    \draw (0,1.1) -- (0,0.9);
    \node [below] at (2,-1) {\small$2$};
    \draw (2,-0.9) -- (2,-1.1);
    \node [right] at (3,0) {\small$3$};
    \draw (2.9,0) -- (3.1,0);
    \draw[->, > = latex] (-1,0)--(-1,1);
    \draw[->, > = latex] (1,0)--(1,-1);
    \draw[->, > = latex] (0,1)--(1,1);
    \draw[->, > = latex] (2,-1)--(3,-1);
    \draw[->, > = latex] (3,0)--(3,1);
    \end{tikzpicture}}}
\]
    \caption{Stable curves in $\beM_{0,\K_3}, \widetilde{\EuScript{M}}_{0,\K_{3}}$ and $\widetilde{\EuScript{M}}^{\fr}_{0,\K_{3}}$}
\end{figure}

The contractad structure in the framed case $\widetilde{\EuScript{M}}^{\fr}_{0,\K_{\lambda}}$ is clear. Namely, to compose a framed stable curve $\Sigma_1\in \widetilde{\EuScript{M}}^{\fr}_{0,\K_{\lambda}/G}$ with a framed curve $\Sigma'\in \widetilde{\EuScript{M}}^{\fr}_{0,\K_{\lambda}|_G}$, one glues $\Sigma$ and $\Sigma'$ at the points $x_{\{G\}}$ and $x'_{\infty}$ and tensors the corresponding tangent rays to obtain a ray in the gluing space $T_{\{G\}}\Sigma\otimes T_{\infty}\Sigma'$. If we restrict ourselves to the case of complete graphs, we recover the so-called Kimura-Stasheff-Voronov operad~\cite{kimura1995operad}. 

In the unframed case $\widetilde{\EuScript{M}}_{0,\K_{\lambda}}$, all the same, except that we transport a tangent ray at $\infty$ on $\Sigma$ to a tangent ray at $\{G\}$ using parallel transports and gluing data given in double points, see~\cite[Sec.~8.1]{dupont2024logarithmic}.
\end{example}
Recall that the Kimura-Stasheff-Voronov operad mentioned above is isomorphic to the framed Fulton-MacPherson contractad~\cite{giansiracusa2012cyclic}. A similar result holds in the graphical case.

\begin{theorem}\label{thm::KN_is_FM}
The Kato-Nakayama realisation of (framed) log little disks contractad is isomorphic to the (framed) Fulton-MacPherson contractad
\[
\ufrM=\KN(\frakFM)\cong \FM_2, \quad \frM=\KN(\frakFM^{\fr})\cong \fFM_2.
\]
\end{theorem}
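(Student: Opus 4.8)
The plan is to identify both Kato--Nakayama realisations with the spherical blow-up models of the Fulton--MacPherson compactifications component by component, and then to check that this identification is compatible with the contractad structure. First I would analyze a single component. By the computation recalled just before the theorem, the Deligne--Faltings log scheme $\frakFM^{\fr}(\Gr)=(\bM(\Gr),\partial\bM(\Gr),\bigoplus_{v}\Ta_v)$ has Kato--Nakayama realisation equal to the fibre product of the principal $S^1$-bundles $\Sph(p^*\Ta_v)$ over the oriented blow-up $\bM^{\mathrm{or}}(\Gr)=\Bl^{\Real}_{\partial\bM(\Gr)}(\bM(\Gr)(\Cmplx))$, and likewise $\KN(\frakFM(\Gr))=\Sph(p^*\Ta_\infty)$. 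The key geometric input is that the oriented real blow-up of the wonderful compactification along its boundary divisor recovers exactly the spherical blow-up description of $\FM_2(\Gr)$ from Section~\ref{sec:sub:FM}: both are obtained from $\NConf_{\Gr}(\Real^2)\cong\M(\Gr)$ (recall $\Real^2\cong\Cmplx$) by a sequence of real spherical blow-ups along the diagonal subspaces $H_G$, which are precisely the loci cut out by the canonical divisors $D_G$. This matches the unframed statement $\ufrM(\Gr)\cong\FM_2(\Gr)$, using that $\psi_\infty=h_{\Gr}$ so that the extra $S^1$-direction coming from $\Ta_\infty$ corresponds to the overall normalisation circle $\Sph(W_{\Gr}/H_{\Gr})$ that distinguishes $\FM_2(\Gr)$ from $\M(\Gr)$.

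For the framed case I would proceed exactly as in the Kimura--Stasheff--Voronov comparison~\cite{giansiracusa2012cyclic,kimura1995operad}: the additional circle factors $\Sph(p^*\Ta_v)$ attached at the remaining vertices $v\in V_{\Gr}$ encode tangent directions at the marked points, which under the homotopy equivalence $\Conf_{\Gr}(\Cmplx)\hookrightarrow\FM_2(\Gr)$ of Corollary~\ref{sled::conf_to_FM} correspond to the $\SO(2)=S^1$-framings defining $\fFM_2=\FM_2\rtimes\SO(2)$. Here I would invoke the splitting $\frakFM^{\fr}\cong\frakFM\rtimes*_{\log}$ from the proposition preceding this theorem together with strict monoidality of $\KN$: since $\KN(*_{\log})\cong S^1$ as a topological group, $\KN$ carries the log semidirect product~\eqref{eq::log_semidirect} to an honest topological semidirect product, giving $\frM=\KN(\frakFM\rtimes*_{\log})\cong\KN(\frakFM)\rtimes S^1=\ufrM\rtimes S^1$, which reduces the framed statement to the unframed one once one checks the $S^1$-actions agree.

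The compatibility with compositions is where the bulk of the verification sits, but it is essentially formal: because $\KN$ is a strict monoidal functor on virtual log schemes, it sends the log-contractad structure maps to topological contractad maps, so the homeomorphisms $\ufrM(\Gr)\cong\FM_2(\Gr)$ and $\frM(\Gr)\cong\fFM_2(\Gr)$ automatically intertwine the two contractad structures provided they are defined by matching normal-bundle data. The self-intersection formula of Proposition~\ref{prop::normalbundle}, $\N_G\cong\Ta_{\{G\}}\otimes\Ta_\infty$, is exactly what guarantees that the gluing of principal $S^1$-bundles at a boundary divisor on the log side matches the spherical gluing $\rho_K\circ^{\Gr}_G$ of~\eqref{eq::contractad_struc_real} on the Fulton--MacPherson side, and Proposition~\ref{prop::psi_comp} ensures the framing circles are transported correctly.

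I expect the main obstacle to be the first step: establishing the precise homeomorphism between the oriented real blow-up $\Bl^{\Real}_{\partial\bM(\Gr)}(\bM(\Gr)(\Cmplx))$ built from the complex wonderful compactification and the real spherical compactification $\FM_2(\Gr)$ of Gaiffi~\cite{gaiffi2003models}. These two constructions start from the \emph{same} open stratum $\M(\Gr)$ and blow up along the \emph{same} poset of tubes, so at the level of stratifications by admissible rooted trees (cf.~\eqref{eq::stratification} and the FM stratification) they are manifestly isomorphic; the work is to promote this stratified bijection to a homeomorphism of manifolds with corners, i.e.\ to match the collar/normal-sphere data near each divisor. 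I would handle this by comparing the local models: near a generic point of $D_G$ the oriented blow-up contributes the circle $\Sph(\N_G)=\Sph(\Ta_{\{G\}}\otimes\Ta_\infty)$, while Gaiffi's construction contributes the sphere $\Sph(W_{\Gr}/H_G)$ for $d=2$, and these agree because the normal direction to the diagonal $H_G$ in the real picture is canonically the same complex line whose Chern class is $-[D_G]$. Once the single-component homeomorphism is pinned down naturally in $\Gr$, the contractad compatibility follows from the monoidality of $\KN$ as sketched above.
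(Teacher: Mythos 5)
Your proposal follows the same overall strategy as the paper's proof (identify the open strata, compare the two stratified compactifications, and reduce the framed case to the unframed one via $\KN(\frakFM\rtimes *_{\log})\cong\KN(\frakFM)\rtimes S^1$), but it differs in how the contractad compatibility is established. The paper does not argue via local normal-bundle models at each divisor; instead it observes that the open stratum satisfies $\KN(\frakFM|_{\M(\Gr)})\cong\M(\Gr)\times S^1\cong\NConf_{\Gr}(\Real^2)$, that both $\ufrM$ and $\FM_2$ are \emph{free} set-contractads on their open strata, so that $\ufrM\cong\T(\KN(\frakFM|_{\M}))\cong\T(\NConf(\Real^2))\cong\FM_2$ holds automatically and contractad-compatibly at the set level, and then upgrades this to a homeomorphism by noting that both sides are compact manifolds with corners with diffeomorphic strata and identical inclusion posets of strata closures. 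Your route through explicit collar data near each $D_G$ would also work (and is closer to the argument in the cited Kimura--Stasheff--Voronov comparison), but the freeness argument buys you the intertwining of compositions for free, whereas ``strict monoidality of $\KN$'' alone does not: monoidality only makes $\KN(\frakFM)$ a contractad, it does not make an arbitrary component-wise homeomorphism to $\FM_2$ a contractad map.

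Two identifications in your write-up need repair, though neither is fatal. First, $\NConf_{\Gr}(\Real^2)\not\cong\M(\Gr)$: the former is the quotient of $\Conf_{\Gr}(\Real^2)$ by translations and positive dilations only, the latter additionally by rotations, so they differ by an $S^1$ (of real dimensions $2|V_{\Gr}|-3$ versus $2|V_{\Gr}|-4$); the correct statement, which your later sentence about $\Ta_\infty$ implicitly supplies, is $\NConf_{\Gr}(\Real^2)\cong\M(\Gr)\times S^1$, and this is exactly the circle the Kato--Nakayama construction adds. Second, the local comparison ``$\Sph(\N_G)$ versus $\Sph(W_{\Gr}/H_G)$'' is dimensionally mismatched: $\Sph(\N_G)$ is a circle (the gluing phase at the boundary face), while $\Sph(W_{\Gr}/H_G)$ is a sphere of dimension $2|V_{\Gr/G}|-3$; the correct local picture matches the circle fibre of the oriented blow-up with the rotational degree of freedom of the collapsing cluster, not with the whole screen sphere.
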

\begin{proof} The proof of the isomorphism $\KN(\frakFM)\cong \FM_2$ repeats the ones of~\cite[Thm.~8.20]{dupont2024logarithmic}, so we just sketch the main ideas. The stratification of $\bM(\Gr)$ transfers to the stratification of $\ufrM(\Gr)$ with respect to $\KN$-realisation: $\M((T))\mapsto \KN(\frakFM(\Gr)|_{\M((T))})$. The corresponding maximal strata of $\ufrM(\Gr)$  is isomorphic to the ones of $\FM_2(\Gr)$
\[\KN(\frakFM|_{\M(\Gr)})\cong \M(\Gr)\times S^1\cong \Conf_{\Gr}(\mathbb{C})/\Aff(\mathbb{C})\times S^1\cong \Conf_{\Gr}(\mathbb{R}^2)/\Aff_0(\Real^2)\cong \NConf_{\Gr}(\mathbb{R}^2)\]
In particular, on the level of set contractads, we have
\[
\ufrM\cong \T(\KN(\frakFM|_{\M}))\cong \T(\NConf(\Real^2))\cong \FM_2.
\]So, both $\ufrM$ and $\FM_2$ are manifolds with corners with diffeomorphic stratification and the same inclusion posets of the closures of strata elements, which implies the isomorphism of topological contractads. In the framed case, the isomorphism follows from the compatibility of KN realisation with semidirect products
\[
\KN(\frakFM^{\fr})\overset{\eqref{eq::log_semidirect}}{=} \KN(\frakFM\rtimes *_{\log})\cong \KN(\frakFM)\rtimes \KN(*_{\log})\cong \FM_2\rtimes S^1\cong \fFM_2.
\]
\end{proof}

\subsection{Log forms and Formality} 
\label{sec:sub:log:formality}
For a sufficiently good\footnote{ It suffices to consider ideally smooth, fine and saturated log schemes } scheme $\X=(X,\M_X)$, there is a complex of sheaves of log de Rham forms $\Omega_{\X}^{\bullet}$ obtained from the usual forms on the underlying scheme $\Omega^{\bullet}_X$ by adding logarithmic forms $d\log(f)$ for $f\in \M^{\mathrm{gr}}_X$. Its hypercomology is called de Rham cohomology and is denoted $H^{\bullet}_{dR}(\X):=\mathbb{H}^{\bullet}(\Omega_{\X}^{\bullet}, d)$.

The \textit{Betti to de Rham comparison}~\cite{kato1999log} asserts that there is a canonical isomorphism of symmetric monoidal functors $H_{dR}^{\bullet}(\X)\cong H^{\bullet}_{B}(\X;\mathbb{C})$, where $H^{\bullet}_{B}(\X;\mathbb{C})$ is the usual singular homology of the Kato-Nakayama realization $\KN(\X)$. In~\cite{vaintrob2021formality, dupont2024logarithmic}, authors refined this statement to the case of complexes. In~\cite[Sec.~6.2]{vaintrob2021formality}, Vaintrob showed that the derived global forms $\mathbb{R}\Gamma(\X, \Omega^{\bullet}_{\X})$ (we consider Thom-Whitney normalization of the Godement resolution for functorial reasons) and complex of singular cochains $C^{\bullet}_{Betti}(\KN(X);\mathbb{C})$ are connected by a chain of symmetric lax-monoidal quasi-isomorphisms as dg functors from the category of log schemes with ordinary morphisms. In~\cite[Sec.~6]{dupont2024logarithmic}, the authors explained that this statement remains true on the level of virtual morphisms. In particular, if $\mathfrak{O}$ is a contractad in the category of log schemes with virtual/ordinary morphisms, then we can take $\mathbb{R}\Gamma(\mathfrak{O};\Omega^{\bullet}_{\mathfrak{O}})$ as the complex dg Hopf model for its Kato-Nakayam realization $\Orb:=\KN(\mathfrak{O})$.

For an acyclic\footnote{log schemes $\X$ with $\Omega^k_{\X}$ are acyclic for all $k\geq 0$} log-sheme $\X$, we could replace derived global forms with usual ones. So, the restrictions of dg functors $\Gamma(-,\Omega^{\bullet}_{-})$ and $C^{\bullet}_{sing}(\KN(-),\mathbb{C})$ to the full subcategory of acyclic log schemes (with ordinary/virtual morphisms) are connected by a chain of quasi-isomorphisms~\cite[Thm.~6]{vaintrob2021formality}. If we also require the underlying scheme $X$ to be proper, then by the degeneracy of the Hodge-to-de Rham spectral sequence $H^p(X,\Omega^q_{\X})\Rightarrow H^{\bullet}_{dR}(\X)$ on the first page, the differential in global forms is zero, so $H^{\bullet}(\X)=\Gamma(X, \Omega_{\X})$. All in all, the restriction of the Betti cochain functor $C^{\bullet}_{sing}(\KN(-))$ to proper acyclic schemes is formal as a symmetric monoidal dg functor~\cite[Thm.~7]{vaintrob2021formality}. In particular, we obtain the following formality criteria

\begin{theorem}[Log formality criteria]\label{thm::formality_log_criteria}
Let $\mathfrak{O}$ be a contractad in the category of log schemes with virtual/ordinary morphisms, and $\Orb:=\KN(\mathfrak{O})$ be the corresponding topological contractad. If each component of the log-contractad $\mathfrak{O}$ is a proper acyclic log scheme, then the contractad $\Orb$ is formal over $\mathbb{C}$. 
\end{theorem}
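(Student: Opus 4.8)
The plan is to package the comparison results recalled above into a single formality statement for a functor and then transport it through the contractad structure. The key input is the assertion, due to Vaintrob~\cite{vaintrob2021formality} and extended to virtual morphisms in~\cite{dupont2024logarithmic}, that the Betti cochain functor $C^{\bullet}_{sing}(\KN(-);\mathbb{C})$ is \emph{formal as a symmetric monoidal dg functor} once restricted to the full subcategory of proper acyclic log schemes: on this subcategory it is connected by a zig-zag of symmetric monoidal quasi-isomorphisms to the cohomology functor $H^{\bullet}(\KN(-);\mathbb{C})$, which carries zero differential. By hypothesis every component $\mathfrak{O}(\Gr)$ lands in exactly this subcategory, so the theorem reduces to the observation that a formal symmetric monoidal dg functor sends a contractad to a formal dg Hopf cocontractad.

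First I would make the functorial translation explicit. A contractad structure on $\mathfrak{O}$ is a family of log-scheme morphisms $\circ^{\Gr}_G\colon \mathfrak{O}(\Gr/G)\times\mathfrak{O}(\Gr|_G)\to\mathfrak{O}(\Gr)$; applying a contravariant lax symmetric monoidal dg functor $\mathcal{F}$ turns each into a cocomposition
\[
\mathcal{F}(\mathfrak{O}(\Gr))\overset{(\circ^{\Gr}_G)^{*}}{\longrightarrow}\mathcal{F}(\mathfrak{O}(\Gr/G)\times\mathfrak{O}(\Gr|_G))\overset{\simeq}{\longleftarrow}\mathcal{F}(\mathfrak{O}(\Gr/G))\otimes\mathcal{F}(\mathfrak{O}(\Gr|_G)),
\]
where the second arrow is the K\"unneth comparison supplied by lax monoidality. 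Exactly as in the Remark on $\Omega_{PA}$ in~\S\ref{sec:sub:chordal:formality}, these K\"unneth maps are quasi-isomorphisms, so both $\mathcal{F}_{dR}(\mathfrak{O}):=\Gamma(\mathfrak{O};\Omega^{\bullet}_{\mathfrak{O}})$ and $\mathcal{F}_B(\mathfrak{O}):=C^{\bullet}_{sing}(\Orb;\mathbb{C})$ acquire the structure of dg Hopf cocontractads up to homotopy, and the rectification formalism of~\cite{fresse2017homotopy} makes this precise.

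Next I would run the zig-zag component-wise. Because every $\mathfrak{O}(\Gr)$ is acyclic, derived and underived global log forms agree, and the chain of symmetric lax-monoidal quasi-isomorphisms connecting $\mathbb{R}\Gamma(-;\Omega^{\bullet})$ to $\mathcal{F}_B$ restricts to a monoidal chain between $\mathcal{F}_{dR}$ and $\mathcal{F}_B$ on acyclic log schemes. Naturality of these quasi-isomorphisms with respect to the morphisms $\circ^{\Gr}_G$ guarantees that they intertwine the cocomposition maps built in the previous step, yielding a zig-zag of quasi-isomorphisms of dg Hopf cocontractads linking $C^{\bullet}_{sing}(\Orb;\mathbb{C})$ with $\mathcal{F}_{dR}(\mathfrak{O})$. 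Finally, properness of each underlying scheme forces the Hodge-to-de Rham spectral sequence $H^{p}(X,\Omega^{q}_{\X})\Rightarrow H^{\bullet}_{dR}(\X)$ to degenerate on the first page, so the differential on $\Gamma(\mathfrak{O}(\Gr);\Omega^{\bullet})$ vanishes; thus $\mathcal{F}_{dR}(\mathfrak{O})$ is its own cohomology, a dg Hopf cocontractad with zero differential, which the Betti--de Rham comparison identifies with $H^{\bullet}(\Orb;\mathbb{C})$. Composing the two zig-zags exhibits $C^{\bullet}_{sing}(\Orb;\mathbb{C})\simeq H^{\bullet}(\Orb;\mathbb{C})$ as dg Hopf cocontractads, which is precisely formality of $\Orb$ over $\mathbb{C}$.

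The hard part is neither the degeneration statement nor the Betti--de Rham comparison -- both are imported verbatim from the cited works -- but the coherent handling of \emph{weak} monoidality. Since $\mathcal{F}_{dR}$ and $\mathcal{F}_B$ are only lax monoidal, $C^{\bullet}_{sing}(\Orb)$ is \emph{a priori} merely a homotopy cocontractad, and one must check that the monoidal zig-zag transports this weak cocontractad structure coherently rather than just the underlying complexes. This is the exact point where the formalism of weak (co)operads and their strictification, already invoked for $\Omega_{PA}$, must be transcribed to cocontractads; it is technical bookkeeping but adds no genuinely new mathematical content.
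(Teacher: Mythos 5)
Your proposal is correct and follows essentially the same route as the paper, which presents this theorem as an immediate consequence of the preceding discussion: the formality of the Betti cochain functor on proper acyclic log schemes (imported from~\cite{vaintrob2021formality, dupont2024logarithmic}), with acyclicity used to replace derived by underived global log forms, properness giving Hodge-to-de Rham degeneration, and the lax-monoidal zig-zag transported through the (weak) cocontractad structure exactly as you describe.
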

Let us prove the main theorem of this section.
\begin{theorem}
\label{thm:D2:log:formal}
The contractads $\fD_2$ and $\D_2$ are formal over $\mathbb{C}$.
\end{theorem}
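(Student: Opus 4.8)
The plan is to combine the three structural results established above. From the previous subsection we have the log contractads $\frakFM$ and $\frakFM^{\fr}$, and Theorem~\ref{thm::KN_is_FM} identifies their Kato--Nakayama realisations with the Fulton--MacPherson contractads, $\KN(\frakFM)\cong\FM_2$ and $\KN(\frakFM^{\fr})\cong\fFM_2$, while Theorem~\ref{thm::FMn_is_En} gives the equivalences $\FM_2\simeq\D_2$ and $\fFM_2\simeq\fD_2$. Since formality over $\mathbb{C}$ is an invariant of the equivalence class of a topological contractad — a zigzag of component-wise weak equivalences induces one on the (weak) Hopf cochain cocontractad, by the Künneth framework recalled in \S\ref{sec:sub:chordal:formality} — it suffices to prove that $\FM_2$ and $\fFM_2$ are formal. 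By the log formality criteria (Theorem~\ref{thm::formality_log_criteria}), I would then reduce everything to checking that each component $\frakFM(\Gr)$ and $\frakFM^{\fr}(\Gr)$ is a \emph{proper acyclic} log scheme.

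Properness is immediate, since the underlying scheme $\bM(\Gr)$ is a smooth projective complex variety (\S\ref{sec::wonerful_contractad}). For acyclicity, the first move is to strip off the Deligne--Faltings data and reduce to a purely divisorial statement. The log Kähler differentials of $\frakFM^{\fr}(\Gr)=(\bM(\Gr),\partial\bM(\Gr),\bigoplus_{v}\mathcal{T}_v)$ sit in a short exact sequence
\[
0\to\Omega^1_{\bM(\Gr)}(\log\partial\bM(\Gr))\to\Omega^1_{\frakFM^{\fr}(\Gr)}\to\mathcal{O}_{\bM(\Gr)}^{\oplus(|V_{\Gr}|+1)}\to 0,
\]
the free quotient being generated by the $d\log$ of the fibre coordinates of the line bundles $\mathcal{T}_v$, $v\in V_{\Gr}\cup\{\infty\}$. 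Consequently each exterior power $\Omega^q_{\frakFM^{\fr}(\Gr)}$ carries a finite filtration whose associated graded is a direct sum of sheaves $\Omega^a_{\bM(\Gr)}(\log\partial\bM(\Gr))^{\oplus r}$, and the same holds, with a single free summand, for $\frakFM(\Gr)$. Hence acyclicity of every component reduces, via the long exact sequences, to the single vanishing
\[
H^p\bigl(\bM(\Gr),\Omega^a_{\bM(\Gr)}(\log\partial\bM(\Gr))\bigr)=0\qquad\text{for all }a\ge 0\text{ and all }p>0.
\]

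The heart of the argument, and the step I expect to be the main obstacle, is this vanishing, which I would deduce from the arrangement-theoretic nature of $\bM(\Gr)$. The open stratum $\M(\Gr)=\Pro(W_{\Gr})\setminus\bigcup_e\Pro(H_e)$ is the complement of the projective graphic hyperplane arrangement, and $\bM(\Gr)$ is its De Concini--Procesi wonderful model for the graphical building set~\cite{de1995wonderful}, with normal crossings boundary $\partial\bM(\Gr)$. By the (generalised) Brieskorn--Orlik--Solomon theorem, $H^{\bullet}(\M(\Gr);\mathbb{C})$ is generated by the logarithmic one-forms $\omega_e=d\log(x_v-x_w)$ presenting $\OS_2(\Gr)$, so the natural map $H^0(\bM(\Gr),\Omega^k_{\bM(\Gr)}(\log\partial\bM(\Gr)))\to H^k(\M(\Gr);\mathbb{C})$ is surjective for every $k$. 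On the other hand, the logarithmic Hodge--de Rham spectral sequence $E_1^{p,q}=H^q(\bM(\Gr),\Omega^p_{\bM(\Gr)}(\log\partial\bM(\Gr)))\Rightarrow H^{p+q}(\M(\Gr);\mathbb{C})$ degenerates at $E_1$ for the smooth proper pair $(\bM(\Gr),\partial\bM(\Gr))$. The degeneration gives $\dim H^k(\M(\Gr))=\sum_{p+q=k}\dim E_1^{p,q}\ge\dim E_1^{k,0}$, while surjectivity forces the reverse inequality; hence both are equalities and $E_1^{p,q}=0$ whenever $q>0$, which is exactly the sought acyclicity. With properness and acyclicity in hand, Theorem~\ref{thm::formality_log_criteria} yields formality over $\mathbb{C}$ of $\FM_2=\KN(\frakFM)$ and $\fFM_2=\KN(\frakFM^{\fr})$, and therefore of the equivalent contractads $\D_2$ and $\fD_2$. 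It is worth emphasising that it is precisely the Brieskorn generation of $\OS_2(\Gr)$ for \emph{all} graphs $\Gr$ — together with the degeneration — that makes the $d=2$ case work uniformly, in contrast to higher $d$, where no such algebraic model is available.
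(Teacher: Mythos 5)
Your proposal is correct and follows the same overall route as the paper: identify $\FM_2\simeq\D_2$ and $\fFM_2\simeq\fD_2$ via Theorems~\ref{thm::FMn_is_En} and~\ref{thm::KN_is_FM}, then invoke the log formality criterion of Theorem~\ref{thm::formality_log_criteria} and reduce to properness plus acyclicity of each component. The only place you diverge is in how acyclicity is established. The paper first uses the virtual trivialisation $\frakFM(\Gr)\cong(\bM(\Gr),\partial\bM(\Gr))\times *_{\log}$ to strip off the Deligne--Faltings data (you instead filter $\Omega^{q}_{\frakFM^{\fr}(\Gr)}$ by the subsheaf of divisorial log forms; both reductions work), and then simply cites that $\M(\Gr)$ is $2$-pure (\cite[Lem.~4.3.3]{khoroshkin2024hilbert}) together with Vaintrob's lemma that $2$-purity of $X\setminus D$ implies acyclicity of $(X,D)$. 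Your Brieskorn-surjectivity-plus-$E_1$-degeneration argument is precisely an unpacking of that lemma: generation of $H^{\bullet}(\M(\Gr))$ by the $d\log$ forms $\omega_e$ is equivalent to Hodge--Tate purity, and the degeneration of the logarithmic Hodge--de Rham spectral sequence then forces $H^{q}(\bM(\Gr),\Omega^{p}(\log\partial\bM(\Gr)))=0$ for $q>0$. What your version buys is self-containedness (only Brieskorn's theorem for the graphic arrangement and Deligne's degeneration are needed, rather than the external purity reference), and it treats the framed Deligne--Faltings log scheme directly rather than deducing it from the unframed case via the semidirect product; what the paper's version buys is brevity and a cleaner interface with the $*_{\log}$-equivariant structure already set up in \S\ref{sec:sub:log:geom}. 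Both are complete proofs.
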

\begin{proof}
Thanks to Theorem~\ref{thm::FMn_is_En}, Fulton-MacPherson contractad $\FM_2$  is $E_2$-contractad. Thanks to Theorem~\ref{thm::KN_is_FM}, the Fulton-MacPherson contractad is a Kato-Nakayama realisation of $\frakFM$. So, by Theorem~\ref{thm::formality_log_criteria}, it suffices to prove that, for each graph $\Gr$, the log schemes $\frakFM(\Gr)$ are proper and acyclic.

Recall that $\bM(\Gr)$ is a smooth projective variety, so it remains to verify the acyclic condition. Recall that the line bundle $\mathcal{T}_{\infty}$ is trivial over $\M(\Gr)$, so we have a virtual isomorphism $\frakFM(\Gr)\cong (\bM(\Gr),\partial\bM(\Gr))\times *_{\log}$ according to Example~\ref{ex::logmorphisms}. Since the log point is proper acyclic, it suffices to show that the divisoral log scheme $(\bM(\Gr),\partial\bM(\Gr))$ is. Thanks to~\cite[Lem~4.3.3]{khoroshkin2024hilbert}, the mixed Hodge structure of $H^k(\M(\Gr))$ is pure of weight $2k$, in other words, $\M(\Gr)$ is $2$-pure. If the complement $X\setminus D$ is $2$-pure, then the divisorial log scheme $(X,D)$ is acyclic~\cite[Lemma~11]{vaintrob2021formality}. This completes the proof. The framed case directly follows from the unframed ones.
\end{proof}

\section{Combinatorial model} 
\label{sec::combinatorial}

In this section, we construct a conjectural combinatorial model $\AD_d$ for the little disks contractad in the category of posets. The model $\AD_d$ is a natural generalization of the operad of acyclic directions on complete graphs introduced by Berger~\cite{berger1997real}. Its components are given by edge-weighted acyclic directions on graphs. We prove that $\AD_d$ is an $E_d$-contractad for $d=1,2$ and for chordal graphs. Most of the constructions and proofs follow, \textit{mutatis mutandis}, those of~\cite{beuckelmann2022small}. The case $d=2$, however, requires additional arguments, and it remains unclear whether the statement extends to $d>2$.

\subsection{Posets and contractads}
\label{sec:sub:Poset}

Let us briefly discuss contractads in the category $\mathsf{Pos}$ of partially ordered sets (posets). Most of the constructions and homotopy properties of poset contractads repeat those in the operad case~\cite[Sec.~3]{beuckelmann2022small}.

For a poset $\mathsf{P}$, we let $|\mathsf{P}|$ be the geometric realization of its nerve $\mathsf{N}(\mathsf{P})$. We say that two posets $\mathsf{P}$ and $\mathsf{Q}$ are \emph{homotopy equivalent} if their geometric realizations are weakly homotopy equivalent $|\mathsf{P}|\simeq |\mathsf{Q}|$.

For a map $\phi\colon X\to \mathsf{P}$ from a topological space to a poset, we consider a poset $X_{\mathsf{P}}=\{(x,p)|\phi(x)\leq p\}$ with an obvious order. We have the projections of spaces $X\leftarrow |X_{\mathsf{P}}|\to |\mathsf{P}|$. The left projection is always a homotopy equivalence. If for each $p\in \mathsf{P}$, the subspace $\phi/p=\{x|\phi(x)\leq p\}$ is a weakly contractible retract of a CW-complex, then projection $X_{\mathsf{P}}\to \mathsf{P}$ induces an equivalence of posets and hence $\phi$ induces the equivalence $X\simeq |\mathsf{P}|$. 

A poset contractad is a contractad in the category of partially ordered sets (posets) $\mathsf{Pos}$.  By monoidal reasons, the geometric realisation of a poset contractad $\Pop$ defines a topological contractad $|\Pop|$. We say that poset contractads are \emph{homotopy equivalent} if their geometric realizations are weakly homotopy equivalent contractads. Similarly, for a morphism $\psi\colon \Pop\to \Q$ from topological to poset contractad, we can construct the poset contractad $\R$ with the projections $\Pop\overset{\simeq}{\leftarrow} |\R|\rightarrow |\Q|$ and the left projection is an equivalence. We say that $\psi$ induces an equivalence $\Pop\simeq |\Q|$ if the right map $|\R|\to |\Q|$ is also an equivalence and hence $\Pop$ and $|\Q|$ are equivalent as topological contractads in the sense of Section~\ref{sec:sub:FM_n=E_n}.

\subsection{Acyclic directions contractad}
\label{sec:sub:Acyclic:contractad}

A \emph{direction} of a graph is an assignment of an orientation to each edge. Such a direction is called \emph{acyclic} if it contains no directed cycles of the form $v_1 \to v_2 \to v_3 \to \cdots \to v_k \to v_1$. For a graph $\Gr$, we denote by $\AD_d(\Gr)$ the set of acyclic directions of the underlying graph, where the edges are labeled by the set $[d]=\{1<2<\cdots <d\}$.

The sets $\mathsf{AD}_d(\Gr)$ form a contractad in the category of sets with respect to the composition $\alpha \circ^{\Gr}_G \beta$, defined for weighted directions $\alpha \in \mathsf{AD}_d(\Gr/G)$ and $\beta \in \mathsf{AD}_d(\Gr|_G)$ by the following rule:
\[
\begin{cases}
	a \overset{{\mathrm{f}}}{\to} a' & \text{if } a,a' \notin G \text{ and } a \overset{{\mathrm{f}}}{\to} a' \text{ in } \alpha, \\
	b \overset{{\mathrm{e}}}{\to} b' & \text{if } b,b' \in G \text{ and } b \overset{{\mathrm{e}}}{\to} b' \text{ in } \beta, \\
	a \overset{{\mathrm{f}}}{\to} b & \text{if } a \notin G, b \in G \text{ and } a \overset{{\mathrm{f}}}{\to} \{G\} \text{ in } \alpha, \\
	b \overset{{\mathrm{f}}}{\to} a & \text{if } a \notin G, b \in G \text{ and } \{G\}\overset{{\mathrm{f}}}{\to}a \text{ in } \alpha.
\end{cases}
\]

\begin{example}
	For the cycle $\Cyc_4$ and tube $G=\{1,2\}$, we have
\[
\vcenter{\hbox{\begin{tikzpicture}[scale=0.6, edge/.style={->,> = latex, thick}]
			\fill (0,0) circle (2pt);
			\node [left] at (0,0) {$4$};
			\fill (1.5,0) circle (2pt);
			\node [right] at (1.5,0) {$3$};
			\fill (0.75,1.5) circle (2pt);
			\node [above] at (0.75,1.5) {$\{1,2\}$};
			\draw[edge] (0.75,1.5)--(1.5,0) node[midway,above, sloped] () {\scriptsize$\mathtt{3}$};
			\draw[edge] (0,0)--(1.5,0) node[midway,below, sloped] () {\scriptsize$\mathtt{1}$};
			\draw[edge] (0,0)--(0.75,1.5) node[midway,above, sloped] () {\scriptsize$\mathtt{2}$};
\end{tikzpicture}}} \circ^{\Cyc_4}_{\{1,2\}}
\hbox{\begin{tikzpicture}[scale=0.6, edge/.style={->,> = latex, thick}]
		\fill (0,0) circle (2pt);
		\fill (1.5,0) circle (2pt);
		\node [left] at (0,0.4) {$1$};
		\node [right] at (1.5,0.4) {$2$};
		\draw[edge] (0,0)--(1.5,0) node[midway,above, sloped] () {\scriptsize$\mathtt{1}$};
\end{tikzpicture}}=
\vcenter{\hbox{\begin{tikzpicture}[scale=0.6, edge/.style={->,> = latex, thick}]
			\fill (0,0) circle (2pt);
			\fill (0,1.5) circle (2pt);
			\fill (1.5,0) circle (2pt);
			\fill (1.5,1.5) circle (2pt);
			\draw[edge] (0,1.5)--(1.5,1.5) node[midway,above, sloped] () {\scriptsize$\mathtt{1}$};
			\draw[edge] (1.5,1.5)--(1.5,0) node[midway,above, sloped] () {\scriptsize$\mathtt{3}$};
			\draw[edge] (0,0)--(1.5,0) node[midway,below, sloped] () {\scriptsize$\mathtt{1}$};
			\draw[edge] (0,0)--(0,1.5) node[midway,above, sloped] () {\scriptsize$\mathtt{2}$};
			\node at (-0.25,1.75) {$1$};
			\node at (1.75,1.75) {$2$};
			\node at (1.75,-0.25) {$3$};
			\node at (-0.25,-0.25) {$4$};
\end{tikzpicture}}}, \quad  \vcenter{\hbox{\begin{tikzpicture}[scale=0.6, edge/.style={->,> = latex, thick}]
			\fill (0,0) circle (2pt);
			\node [left] at (0,0) {$4$};
			\fill (1.5,0) circle (2pt);
			\node [right] at (1.5,0) {$3$};
			\fill (0.75,1.5) circle (2pt);
			\node [above] at (0.75,1.5) {$\{1,2\}$};
			\draw[edge] (1.5,0)--(0.75,1.5) node[midway,above, sloped] () {\scriptsize$\mathtt{1}$};
			\draw[edge] (1.5,0)--(0,0) node[midway,below, sloped] () {\scriptsize$\mathtt{3}$};
			\draw[edge] (0,0)--(0.75,1.5) node[midway,above, sloped] () {\scriptsize$\mathtt{1}$};
\end{tikzpicture}}} \circ^{\Cyc_4}_{\{1,2\}}
\hbox{\begin{tikzpicture}[scale=0.6, edge/.style={->,> = latex, thick}]
		\fill (0,0) circle (2pt);
		\fill (1.5,0) circle (2pt);
		\node [left] at (0,0.4) {$1$};
		\node [right] at (1.5,0.4) {$2$};
		\draw[edge] (1.5,0)--(0,0) node[midway,above, sloped] () {\scriptsize$\mathtt{2}$};
\end{tikzpicture}}=
\vcenter{\hbox{\begin{tikzpicture}[scale=0.6, edge/.style={->,> = latex, thick}]
			\fill (0,0) circle (2pt);
			\fill (0,1.5) circle (2pt);
			\fill (1.5,0) circle (2pt);
			\fill (1.5,1.5) circle (2pt);
			\draw[edge] (1.5,1.5)--(0,1.5) node[midway,above, sloped] () {\scriptsize$\mathtt{2}$};
			\draw[edge] (1.5,0)--(1.5,1.5);
			\node [rotate=-90] at (1.9,0.75){\scriptsize$\mathtt{1}$};
			\draw[edge] (1.5,0)--(0,0) node[midway,below, sloped] () {\scriptsize$\mathtt{3}$};
			\draw[edge] (0,0)--(0,1.5) node[midway,above, sloped] () {\scriptsize$\mathtt{1}$};
			\node at (-0.25,1.75) {$1$};
			\node at (1.75,1.75) {$2$};
			\node at (1.75,-0.25) {$3$};
			\node at (-0.25,-0.25) {$4$};
\end{tikzpicture}}}
\]
\end{example}

\begin{lemma}
	\label{lem::AD::poset}	
	The sets $\AD_d(\Gr)$ admit a natural partial order:
	\[
	\alpha_1 \leq \alpha_2 \in \AD_d(\Gr) \quad \stackrel{\mathrm{def}}{\Leftrightarrow} \quad  
	\forall \text{ edge } v \overset{{\mathrm{e}}}{\to} w \in \alpha_1, \ 
	\begin{cases}
		v \overset{{\mathrm{f}}}{\to} w \in \alpha_2 \text{ with } {\mathrm{e}} \leq {\mathrm{f}}, \\
		v \overset{{\mathrm{f}}}{\leftarrow} w \in \alpha_2 \text{ with } {\mathrm{e}} < {\mathrm{f}}.
	\end{cases}
	\]
	This order is compatible with the contractad composition maps.
\end{lemma}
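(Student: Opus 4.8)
The plan is to treat the two assertions separately: first that the relation $\leq$ is genuinely a partial order on each set $\AD_d(\Gr)$, and then that the composition maps $\circ^{\Gr}_G$ are morphisms of posets, i.e.\ monotone in each argument (the product sets carrying the componentwise order). The well-definedness of $\alpha\circ^{\Gr}_G\beta$ as an acyclic direction is already granted by the set-level contractad structure, so only the order-theoretic content remains.

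For the partial-order axioms I would argue edge by edge, exploiting that the comparison $\alpha_1\leq\alpha_2$ is by definition a conjunction of conditions, one per edge of $\Gr$. Reflexivity is immediate, since every edge $v\overset{\mathrm{e}}{\to}w$ of $\alpha$ is compared with itself through the orientation-preserving clause with $\mathrm{e}\leq\mathrm{e}$. For antisymmetry, fix the unoriented edge joining $v$ and $w$, carrying weight $\mathrm{e}$ in $\alpha_1$ and $\mathrm{f}$ in $\alpha_2$; the hypotheses $\alpha_1\leq\alpha_2$ and $\alpha_2\leq\alpha_1$ each impose a constraint on this edge. If the two orientations agree I obtain $\mathrm{e}\leq\mathrm{f}$ and $\mathrm{f}\leq\mathrm{e}$, hence $\mathrm{e}=\mathrm{f}$; if they disagree I obtain $\mathrm{e}<\mathrm{f}$ and $\mathrm{f}<\mathrm{e}$, which is absurd. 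Thus every edge carries the same orientation and weight in both directions, so $\alpha_1=\alpha_2$. The decisive structural feature here is the asymmetry between the two clauses — a weak inequality when orientation is preserved, a strict one when it is reversed — which is precisely what excludes an orientation-reversing pair.

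Transitivity is the one step demanding genuine care, and I expect it to be the main (though still elementary) obstacle. Given $\alpha_1\leq\alpha_2\leq\alpha_3$ and an edge $v\overset{\mathrm{e}}{\to}w$ of $\alpha_1$, I would trace this edge through $\alpha_2$ and then $\alpha_3$, splitting into the four cases determined by whether each of the two steps preserves or reverses the orientation. In each case one composes the accumulated weight inequalities and checks that the outcome lands in the correct clause for $\alpha_1\leq\alpha_3$. The bookkeeping closes because an odd number of reversals flips the orientation and always produces a strict weight inequality, matching the strict clause, whereas an even number of reversals restores the original orientation and yields a weight inequality that certainly implies the required weak one. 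Verifying all four cases (preserve/preserve, preserve/reverse, reverse/preserve, reverse/reverse) finishes this step.

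Finally, for compatibility with $\circ^{\Gr}_G\colon\AD_d(\Gr/G)\times\AD_d(\Gr|_G)\to\AD_d(\Gr)$ I would classify the edges of $\Gr$ into three types according to their position relative to $G$: edges with both endpoints outside $G$, edges with both endpoints inside $G$, and straddling edges with exactly one endpoint in $G$. By the defining rule for $\circ^{\Gr}_G$, the orientation and weight of a type-one edge are read off from the corresponding edge of $\alpha$, those of a type-two edge from the corresponding edge of $\beta$, and every straddling edge incident to a fixed outside vertex $a$ inherits them from the single edge joining $a$ to the contracted vertex $\{G\}$ in $\alpha$. Consequently, given $\alpha\leq\alpha'$ and $\beta\leq\beta'$, the edgewise condition defining $\alpha\circ^{\Gr}_G\beta\leq\alpha'\circ^{\Gr}_G\beta'$ reduces, edge by edge, to the already-assumed comparisons in $\alpha$ or in $\beta$. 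Monotonicity in each argument follows, and hence each $\circ^{\Gr}_G$ is a morphism of posets, as required.
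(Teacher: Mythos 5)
Your proof is correct and is exactly the ``direct verification'' that the paper leaves to the reader: the edgewise case analysis for reflexivity, antisymmetry (using the weak/strict asymmetry of the two clauses), and transitivity (four cases by parity of orientation reversals) all close as you describe, and the three-way classification of edges relative to $G$ correctly reduces monotonicity of $\circ^{\Gr}_G$ to the assumed comparisons in $\alpha$ and $\beta$.
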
	
\begin{proof}
	Direct verification.
\end{proof}

In other words, Lemma~\ref{lem::AD::poset} shows that $\AD_d(\Gr)$ is a contractad in the category of posets. 

For $d=1$, the set $\AD_1(\Gr)$ consists of acyclic orientations of $\Gr$, and any two distinct acyclic orientations are incomparable. In particular, we obtain the isomorphism $|\AD_1| = \Ass$ with the associative contractad. 	

Next, we describe the relation of these contractads to the little disks contractad. Consider the morphism of (set-theoretic) graphical collections
\[
\psi_d\colon \Conf(\mathbb{R}^d)\to \AD_d,
\]
which sends a configuration $x\in \Conf_{\Gr}(\mathbb{R}^d)$ to a weighted direction $\psi_d(x)$ defined as follows: 
$$\textit{ we set $v \overset{i}{\to} w$ if $i$ is the smallest index with $x^{(i)}_v \neq x^{(i)}_w$, and in this case $x^{(i)}_v < x^{(i)}_w$.} 
$$
\label{page::description_of_proper}Let us describe the image of this map. For a subcycle $\Cyc\subset \Gr$ on $n\geq 3$ vertices, we consider one of the two possible cyclic orientations of its edges ( clockwise and counterclockwise respectively). An acyclic weighted direction of $\alpha$ defines a decomposition of the edge set of $\Cyc$ into two non-empty subsets $E_{\Cyc}=\Cyc^+(\alpha)\coprod \Cyc^-(\alpha)$, where $\Cyc^{s}(\alpha)$ consists of edges whose orientations with respect to the cyclic orientation of $\Cyc$ and the acyclic direction $\alpha$ coincide for $s=+$ and do not coincide for $s=-$. We shall refer to a cyclic orientation of a cycle as internal to distinguish it from an acyclic orientation of a graph.

We call an acyclic weighted direction $\alpha$ of a graph $\Gr$ \emph{proper} if for every subcycle $\Cyc=\{v_i\}_{i\in \mathbb{Z}_n}\subset V_{\Gr}$, we have $\min_{e\in \Cyc^+(\alpha)} w(e)=\min_{e'\in \Cyc^-(\alpha)} w(e')$.  Below is an example of the proper condition for a weighted acyclic direction of $\Cyc_6$
\[
\vcenter{\hbox{\begin{tikzpicture}[scale=0.7, edge/.style={->,> = latex, thick}]
    \fill (-0.63,1.075)  circle (2pt);
    \fill (0.63,1.075)  circle (2pt);
    \fill (-1.22,0) circle (2pt);
    \fill (1.22,0) circle (2pt);
    \fill (-0.63,-1.075)  circle (2pt);
    \fill (0.63,-1.075)  circle (2pt);

    \draw[edge] (-0.63,1.075)--(0.63,1.075) node[midway,above, sloped] () {\scriptsize$\mathrm{w}_1$};
     \draw[edge] (1.22,0)--(0.63,1.075) node[midway,above, sloped] () {\scriptsize$\mathrm{w}_2$};
     \draw[edge] (1.22,0)--(0.63,-1.075) node[midway,below, sloped] () {\scriptsize$\mathrm{w}_3$};
     \draw[edge] (0.63,-1.075)--(-0.63,-1.075)node[midway,below, sloped] () {\scriptsize$\mathrm{w}_4$};
     \draw[edge] (-1.22,0)--(-0.63,-1.075) node[midway,below, sloped] () {\scriptsize$\mathrm{w}_5$};
     \draw[edge] (-0.63,1.075)--(-1.22,0) node[midway,above, sloped] () {\scriptsize$\mathrm{w}_6$};
    \draw[->] (240:0.4) arc[start angle=240, end angle=-60, radius=0.4];    
    \end{tikzpicture}}} \Rightarrow \begin{aligned}[c]
  \Cyc^+(\alpha)=\{e_1,e_3,e_4\}
  \\
  \Cyc^-(\alpha)=\{e_2,e_5,e_6\} 
\end{aligned} \Rightarrow \min\{\mathrm{w}_1,\mathrm{w}_3,\mathrm{w}_4\}= \min\{\mathrm{w}_2,\mathrm{w}_5,\mathrm{w}_6\}
\]
We denote by $\Pro_d(\Gr)\subset \AD_d(\Gr)$ the subcollection of proper acyclic directions.
\begin{lemma}\label{lemma::proper=realisable}
A weighted acyclic direction is in the image of $\psi_d$ if and only if it is proper, i.e., we have
\[
\psi_d(\Conf(\mathbb{R}^d))=\Pro_d
\]
\end{lemma}
\begin{proof} ($\Rightarrow$) For a point $x=(x^{(k)}_v)_{v\in V_{\Gr}}^{k=1,\cdots,d}\in \Conf_{\Gr}(\Real^d)$, let $\alpha=\psi_d(x)$ be the corresponding acyclic direction with weights $w_x=\{w_x(e)\}_{e\in E_{\Gr}}$. Let $\Cyc=\{v_i\}_{i\in\mathbb{Z}_n}\subset \Gr$ be a subcycle with the internal orientation $v_i\to v_{i+1}$ for all $i\in \mathbb{Z}_n$, and we consider the numbers $k_{\pm}=\min_{e\in \Cyc^{\pm}(\alpha)} w_x(e)$. We want to show that $k_-=k_+$. By the construction, we have
\begin{gather*}
\forall l<k_{\pm}, (v_i,v_{i+1})\in \Cyc^{\pm}(\alpha)\colon x^{(l)}_i=x^{(l)}_{i+1}
\\
\forall (v_i,v_{i+1})\in \Cyc^{\pm}(\alpha)\in \mathbb{Z}_n\colon \pm x^{(k_{\pm})}_i\geq \pm x^{(k_{\pm})}_{i+1}
\\
\exists (v_i,v_{i+1})\in \Cyc^{\pm}(\alpha) \colon \pm x^{(k_{\pm})}_i> \pm x^{(k_{\pm})}_{i+1}.
\end{gather*}
If $k_{-}<k_{+}$, then we get the following contradiction 
\[
0=\sum_{i\in \mathbb{Z}_n}x^{(k_-)}_i-x^{(k_-)}_{i+1}=\sum_{(v_i,v_{i+1})\in \Cyc^-(\alpha)} x^{(k_-)}_i-x^{(k_-)}_{i+1}<0.
\] So, we have $k_-\geq k_+$. For similar reasons, we deduce $k_-\leq k_+$, so $k_-=k_+$.

($\Leftarrow$) Let $\alpha$ be an acyclic direction of $\Gr$. From the theory of oriented matroids~\cite[Sec.~1.2(c)]{bjorner1999oriented}, for a subset $S\subset E_{\Gr}$ of edges, the region in $\Real^{V_{\Gr}}$ cut out by (in)equalities
\[
\bigcap_{\substack{ v\to w \text{ in }\alpha \\(v,w)\in S}} \{x_v>x_w \} \cap \bigcap_{\substack{ v\to w \text{ in }\alpha \\(v,w)\not\in S}} \{x_v=x_w \}
\] is non-empty if and only if, for each subcycle $\Cyc\subset \Gr$ with $E_{\Cyc}\cap S\neq \varnothing$, the both intersections $\Cyc^+(\alpha)\cap S$ and $\Cyc^-(\alpha)\cap S$ are non-empty.

Let $\alpha$ be a proper weighted acyclic direction of $\Gr$. To check that $g$ belongs to the image it is sufficient to check that the region $\R(\alpha)\subset (\Real^d)^{V_{\Gr}}$ cut out by the following (in)equalities
\[
\R(\alpha):=\bigcap_{v\overset{\mathrm{f}}{\to} w\in \alpha} \{x| x^{(i)}_v=x^{(i)}_v\text{, for }i<\mathrm{f}\text{, and }x^{(i)}_v>x^{(i)}_w\text{, for }i\geq \mathrm{f}\}
\] is non-empty. Note that $\R(\alpha)$ is non-empty iff the images $\R_i(\alpha):=\pi_i(\R(\alpha))\subset \Real^{V_{\Gr}}$ with respect to the coordinates $\pi_i\colon (\Real^d)^{V_{\Gr}}\to \Real^{V_{\Gr}}$ are non-empty for all $i$. The region $\R_i(\alpha)$ has the form
\[
\R_i(\alpha):=\bigcap_{\substack{v\overset{\mathrm{f}}{\to}w \\ \mathrm{f}\leq i}} \{x_v>x_w\}\cap\bigcap_{\substack{v\overset{\mathrm{f}}{\to}w \\ \mathrm{f}>i}} \{x_v=x_w\} ,
\] hence $\R_i(\alpha)$ is non-empty iff for every subcycle $\Cyc\subset \Gr$ with $E^{\leq i}_{\Gr}\cap \Cyc\neq \varnothing$, where $E^{\leq i}_{\Gr}$ consist of edges of weight at most $i$, we have $\Cyc^{\pm}(\alpha)\cap E^{\leq i}_{\Gr}\neq \varnothing$. Let us verify that a proper weighted acyclic direction $\alpha$ satisfy this condition. For a cycle $\Cyc$, let $w_0=\min \{w(e)|e\in \Cyc\}$. If $i<w_0$, then $E^{\leq i}_{\Gr}\cap \Cyc=\varnothing$. If $i\geq w_0$, since $\alpha$ is proper, there are at least two edges $e^{\pm}\in \Cyc^{\pm}(\alpha)$ of weight $w_0$.
\end{proof}
\begin{remark} The proof of Lemma~\ref{lemma::proper=realisable} relies heavily on ideas from oriented matroids theory, specifically oriented graphic matroids. In fact, the poset $\Pro_2(\Gr)$ is isomorphic to the face lattice of Salvetti complex of the graphic matroid~\cite{salvetti1987topology}.
\end{remark}
\begin{proposition}\label{prop::conf_to_P} 
	The map $\psi_d\colon \Conf(\mathbb{R}^d)\to \Pro_d$ of (set-theoretic) graphical collections induces an equivalence
	\[
	\Conf(\mathbb{R}^d) \simeq |\Pro_d|
	\]
	of topological graphical collections.
\end{proposition}
\begin{proof}
	It suffices to show that for each $\alpha \in \Pro_d(\Gr)$, the subspace 
	\[
	\psi/\alpha := \{x \mid \phi(x)\leq \alpha\}
	\]
	is contractible. The proof of contractibility follows the same lines as in~\cite[Thm.~5.8]{beuckelmann2022small}.
\end{proof}

\begin{conjecture}
	\label{conj::ADn_is_En}
	The map $\psi_d\colon \Conf(\mathbb{R}^d)\to \AD_d$ induces a homotopy equivalence
	\[
	\Conf_{\Gamma}(\mathbb{R}^d)\simeq |\AD_d(\Gamma)|
	\]
	of topological graphical collections.
\end{conjecture}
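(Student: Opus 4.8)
The plan is to apply the criterion of \S\ref{sec:sub:Poset} directly to the morphism of graphical collections $\psi_d\colon\Conf(\mathbb{R}^d)\to\AD_d$, rather than first comparing the sub-collection $\Pro_d$ with $\AD_d$. Fix a graph $\Gr$, set $X=\Conf_{\Gr}(\mathbb{R}^d)$ and $\mathsf{P}=\AD_d(\Gr)$, and form the poset $X_{\mathsf{P}}=\{(x,\alpha)\mid\psi_d(x)\le\alpha\}$. The left leg of $X\leftarrow|X_{\mathsf{P}}|\to|\mathsf{P}|$ is always an equivalence, so by the criterion recalled in \S\ref{sec:sub:Poset} it suffices to verify that each fibre
\[
\psi/\alpha=\{x\in\Conf_{\Gr}(\mathbb{R}^d)\mid\psi_d(x)\le\alpha\}
\]
is a weakly contractible retract of a CW-complex. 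For $\alpha\in\Pro_d(\Gr)$ this is exactly what is established in Proposition~\ref{prop::conf_to_P}, so the entire content of the conjecture is the extension of this contractibility to the \emph{non-realizable} directions $\alpha\in\AD_d(\Gr)\setminus\Pro_d(\Gr)$. First I would record that $\psi/\alpha$ is never empty: replacing every edge-label of $\alpha$ by $1$ yields a realizable direction $\alpha_{\min}\le\alpha$, and since $\alpha_{\min}\in\Pro_d(\Gr)=\mathrm{Im}(\psi_d)$ we have $\varnothing\neq\psi/\alpha_{\min}\subseteq\psi/\alpha$.

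Next I would make the fibres explicit. Writing $\alpha$ as an acyclic orientation carrying a label $f_e\in[d]$ on each edge $e=(v,w)$ oriented $v\to w$, one unwinds the order on $\AD_d$ (Lemma~\ref{lem::AD::poset}) to see that the condition $\psi_d(x)\le\alpha$ decomposes edge by edge: the edge $e$ requires that the first coordinate in which $x_v$ and $x_w$ differ is $\le f_e$, and that if this first index equals $f_e$ then $(x_v)_{f_e}<(x_w)_{f_e}$. Equivalently, letting $\pi_{<f}$ denote the projection onto the first $f-1$ coordinates, $e$ imposes that either $\pi_{<f_e}(x_v)\neq\pi_{<f_e}(x_w)$, or else $\pi_{<f_e}(x_v)=\pi_{<f_e}(x_w)$ and $(x_v)_{f_e}<(x_w)_{f_e}$. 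Thus $\psi/\alpha$ is a locally closed semialgebraic subset of $\Conf_{\Gr}(\mathbb{R}^d)$ cut out by a mixture of open linear inequalities and conditional ``resolve-a-tie-at-level-$f_e$'' constraints. When $\alpha$ is realizable these conditional constraints are globally consistent and $\psi/\alpha$ is contractible (the geometric content of Proposition~\ref{prop::conf_to_P} via~\cite[Thm.~5.8]{beuckelmann2022small}); failure of realizability is precisely failure of this global consistency.

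The remaining step, contractibility of $\psi/\alpha$ for non-realizable $\alpha$, is where I expect the genuine difficulty, and where the dimension $d$ enters. The natural approach is a staged deformation that resolves the constraints level by level, starting from $f=1$. Let $C_1\subset\mathbb{R}^{V_{\Gr}}$ be the open convex cone defined by the label-$1$ inequalities on the first coordinates (nonempty since $\alpha$ is acyclic); one flows the first coordinates inside $C_1$ toward a generic position, aiming to separate all of them so that the higher-level ties are broken and the higher-level constraints become vacuous. The subtlety is that such a flow may be forced across a wall $\{(x_v)_1=(x_w)_1\}$ attached to a label-$2$ edge along which the current second coordinates violate $(x_v)_2<(x_w)_2$; there the point leaves $\psi/\alpha$. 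For $d\le 2$ this is repairable: only two levels interact, so after relaxing the first coordinates one is left with a single convex system in the second coordinates, and a two-step flow — first push apart the first coordinates in the order dictated by the residual label-$2$ constraints, then contract the second coordinates — stays inside $\psi/\alpha$ and terminates at a point, the wall crossings being ordered by the acyclicity of $\alpha$. This is the ``additional argument'' needed for $d=2$, modeled on~\cite[Thm.~5.8]{beuckelmann2022small}. For $d>2$ the analogous flow must reconcile three or more interacting levels of ties at once, and there is no evident acyclic ordering of the wall crossings keeping the path inside $\psi/\alpha$; this is exactly the obstruction leaving Conjecture~\ref{conj::ADn_is_En} open for $d>2$.

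Finally, the equivalences produced this way are natural in $\Gr$ and compatible with restriction and contraction because $\psi_d$ is a map of set-theoretic graphical collections; together with the component-wise equivalences this yields the asserted equivalence $\Conf_{\Gr}(\mathbb{R}^d)\simeq|\AD_d(\Gr)|$ of topological graphical collections. An appealing uniform alternative, which would prove the conjecture outright if it could be carried out, is to construct a monotone idempotent interior operator $r\colon\AD_d(\Gr)\to\AD_d(\Gr)$ with $r\le\mathrm{id}$ and image $\Pro_d(\Gr)$ — that is, to show that the realizable directions below any $\alpha$ possess a greatest element $r(\alpha)$. Such an $r$ exhibits $|\Pro_d(\Gr)|$ as a deformation retract of $|\AD_d(\Gr)|$ through the natural relation $r(\alpha)\le\alpha$, after which Proposition~\ref{prop::conf_to_P} finishes the argument. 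I expect this greatest realizable lower bound to exist for $d\le 2$ and to fail in general, mirroring the geometric obstruction above.
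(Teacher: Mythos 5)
Your overall strategy is genuinely different from the paper's. You propose to apply the criterion of \S\ref{sec:sub:Poset} (i.e.\ Quillen's Theorem A) directly to $\psi_d\colon\Conf_\Gr(\mathbb{R}^d)\to\AD_d(\Gr)$, which requires contractibility of \emph{every} fibre $\psi/\alpha$, including those over non-realizable $\alpha$. The paper deliberately avoids this: it only proves fibre contractibility over realizable directions (Proposition~\ref{prop::conf_to_P}, giving $\Conf(\mathbb{R}^d)\simeq|\Pro_d|$), and then handles the passage from $\Pro_2$ to $\AD_2$ entirely at the level of posets, by showing that both inclusions $\Pro_2\hookrightarrow\AD_2^{\mathrm{ext}}$ and $\AD_2\hookrightarrow\AD_2^{\mathrm{ext}}$ satisfy Quillen's criterion (Propositions~\ref{prop::ADext} and~\ref{prp::p-ad-ext}) and invoking two-out-of-three. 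There the ``fibres'' are discrete down-sets $\downarrow g\cap\Pro_2(\Gr)$, whose contractibility is established by an induction on the partial order together with a covering whose nerve is the boundary of a cross-polytope with one facet removed.

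The gap in your argument is precisely the step you flag as the difficulty: the contractibility of $\psi/\alpha$ for $\alpha\in\AD_2(\Gr)\setminus\Pro_2(\Gr)$ is never actually proved. The ``two-step flow'' is not defined as a continuous map; since different points of $\psi/\alpha$ have different sets of ties $(x_v)_1=(x_w)_1$, a deformation prescribed stratum-by-stratum (``push apart the first coordinates, then contract the second'') does not obviously glue to a homotopy $\psi/\alpha\times[0,1]\to\psi/\alpha$, and the direction in which a tie is to be broken is not canonically determined by $\alpha$ when several label-$2$ edges and label-$1$ inequalities interact. This is exactly the difficulty that the combinatorial detour through $\AD_2^{\mathrm{ext}}$ and broken circuits is designed to circumvent. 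Your proposed ``uniform alternative'' is also unlikely to work even for $d=2$: the existence of a greatest realizable direction $r(\alpha)\le\alpha$ would make $\downarrow\alpha\cap\Pro_2(\Gr)$ a cone, whereas the paper's proof of Proposition~\ref{prp::p-ad-ext} has to cover this down-set by $2^{k}-1$ pieces indexed by a punctured cross-polytope boundary, which strongly indicates that it has several maximal elements and no maximum. So while your reduction to fibre contractibility is a legitimate alternative \emph{strategy}, as written it does not constitute a proof of the $d=2$ (or chordal) cases, and of course the full conjecture for $d>2$ remains open in both approaches.
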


\begin{theorem}
	Conjecture~\ref{conj::ADn_is_En} holds in the following cases:
	\begin{itemize}[itemsep=0pt, topsep=0pt]
		\item for any $d$ in the class of chordal graphs in the sense of Section~\ref{sec::restriction_to_chordal}.,
		\item for $d=1,2$ and arbitrary graphs $\Gr$.
	\end{itemize}
\end{theorem}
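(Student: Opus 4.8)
The plan is to reduce everything to a single contractibility statement via the poset criterion of \S\ref{sec:sub:Poset}. To show that $\psi_d$ induces an equivalence $\Conf_{\Gr}(\Real^d)\simeq|\AD_d(\Gr)|$ it suffices to prove that, for \emph{every} $\alpha\in\AD_d(\Gr)$, the subspace
\[
\psi_d/\alpha=\{x\in\Conf_{\Gr}(\Real^d)\mid \psi_d(x)\leq\alpha\}
\]
is weakly contractible; as $\psi_d/\alpha$ is semialgebraic it automatically satisfies the retract hypothesis of the criterion. Proposition~\ref{prop::conf_to_P} already records this for $\alpha\in\Pro_d(\Gr)$, following \cite[Thm.~5.8]{beuckelmann2022small}, so the entire content of the theorem is to extend contractibility of $\psi_d/\alpha$ to all of $\AD_d(\Gr)$ in the two stated regimes. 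I would first note that $\psi_d/\alpha$ is never empty: lowering every label of $\alpha$ to $1$ yields an acyclic direction lying in $\Pro_d(\Gr)$ and below $\alpha$ in the order of Lemma~\ref{lem::AD::poset}, so the down-set of $\alpha$ always meets the image of $\psi_d$.

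For chordal graphs I would induct on $|V_{\Gr}|$ along a perfect elimination ordering. Let $v$ be a simplicial vertex, so $\N(v)$ is a clique, and put $\Gr'=\Gr\setminus v$. Fixing $\alpha\in\AD_d(\Gr)$ with restriction $\alpha'=\alpha|_{\Gr'}$, forgetting $x_v$ sends $\psi_d/\alpha$ to $\psi_d/\alpha'$. The claim is that this is a fibration with contractible fibers, which is the combinatorial shadow of the topological fibration with section of Proposition~\ref{prop::fiber_arguement}. Indeed, once the remaining points are fixed, the admissible positions for $x_v$ are those satisfying, for each edge $(v,w)$, the single lexicographic-cone inequality imposed by $\alpha$; since the neighbours of $v$ form a clique their $\alpha'$-induced order is total, these inequalities are mutually compatible, and they carve out one nonempty star-shaped (hence contractible) chamber inside $\Real^d$ minus the $\deg_{\Gr}(v)$ neighbour points. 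Thus $\psi_d/\alpha\simeq\psi_d/\alpha'$, and the inductive hypothesis, with base case the one-vertex graph $\Path_1$ (or a clique, where the statement is exactly \cite{beuckelmann2022small}), yields contractibility of $\psi_d/\alpha$.

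For $d\leq 2$ and arbitrary $\Gr$ the argument is instead dimensional. When $d=1$ every directed path carries the constant minimal label $1$, so $\Pro_1=\AD_1$ and the claim is already Proposition~\ref{prop::conf_to_P}; concretely $\psi_1/\alpha$ is the intersection of open half-lines, a convex cone. When $d=2$ I would resolve the two coordinates in turn. Each label-$1$ edge of $\alpha$ imposes a strict inequality on the real parts alone, while each label-$2$ edge of $\alpha$ imposes a constraint on imaginary parts that is active only when the corresponding real parts coincide. Flowing the real parts by $x_v\mapsto((x_v)_1+t\,c_v,(x_v)_2)$, where $c$ is a linear extension of the acyclic orientation underlying $\alpha$, keeps all label-$1$ inequalities strict, separates every label-$2$ pair monotonically without ever violating its constraint, and for $t\to\infty$ lands in the open locus where all real parts are distinct. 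On that locus all label-$2$ constraints become vacuous, so the region is a product of a contractible real-part chamber with $\Real^{V_{\Gr}}$ in the imaginary directions, hence contractible; the flow exhibits a deformation retraction of $\psi_2/\alpha$ onto it, for every $\alpha$ including those outside $\Pro_2(\Gr)$.

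The delicate point, and the reason the statement remains only conjectural for $d>2$, is precisely the contractibility of $\psi_d/\alpha$ when $\alpha\notin\Pro_d(\Gr)$ and $\Gr$ carries an induced cycle. Such an $\alpha$ encodes two directed paths with a common source and target but disagreeing minimal labels, and the coordinate-resolution above leaves behind a residual problem on the locus of tied first coordinates. For $d\leq 2$ this residual problem is at most one-dimensional and collapses; the chordal induction avoids it entirely because a simplicial vertex creates no cycles beyond those internal to a clique, where all minimal labels automatically agree. For $d\geq 3$, however, the residual locus is again a full graphical configuration problem in $\Real^{d-1}$ on a possibly non-chordal subgraph, and there $\psi_d/\alpha$ can fail to be contractible; this is exactly the obstruction that the present proof cannot remove and that leaves Conjecture~\ref{conj::ADn_is_En} open in general.
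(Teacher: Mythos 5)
Your reduction of the theorem to the contractibility of $\psi_d/\alpha$ for \emph{every} $\alpha\in\AD_d(\Gr)$ is a legitimate strategy, and your treatment of the chordal case (induction along a perfect elimination ordering, with the fibration of Proposition~\ref{prop::fiber_arguement} supplying the induction step) and of the case $d=1$ (where $\Pro_1=\AD_1$, so Proposition~\ref{prop::conf_to_P} already suffices) agrees in substance with the paper. The problem is the case $d=2$ for $\alpha\notin\Pro_2(\Gr)$, which is exactly where all the difficulty sits, and your argument there fails. The flow $x_v\mapsto((x_v)_1+t\,c_v,(x_v)_2)$ does not preserve $\psi_2/\alpha$. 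Take $\Gr=\Path_2$ with $\alpha=(1\overset{2}{\to}2)$ and the configuration $x_1=(1,5)$, $x_2=(0,0)$. Then $\psi_2(x)=(2\overset{1}{\to}1)\leq\alpha$ (since $1<2$), so $x\in\psi_2/\alpha$. Any linear extension has $c_1<c_2$, so at the finite time $t_0$ when the first coordinates collide the configuration becomes $x_1=(1,5)$, $x_2=(1,0)$, whose image under $\psi_2$ is $2\overset{2}{\to}1\not\leq\alpha$; the flow has left $\psi_2/\alpha$ (and had the second coordinates been equal it would have left $\Conf_{\Gr}(\Real^2)$ entirely). The point is that a label-$2$ edge $v\overset{2}{\to}w$ of $\alpha$ forbids precisely the closed locus $(x_v)_1=(x_w)_1$, $(x_v)_2\geq(x_w)_2$, and your shear pushes every configuration with $(x_v)_1>(x_w)_1$ straight through that wall. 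So the claim that the flow ``separates every label-$2$ pair monotonically without ever violating its constraint'' is false, and the asserted deformation retraction onto the distinct-real-parts locus does not exist as described.

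This is not a repairable slip in the choice of flow: the paper deliberately avoids proving contractibility of $\psi_2/\alpha$ for $\alpha\notin\Pro_2(\Gr)$ altogether. Its $d=2$ argument (Section~\ref{sec:sub:acycl:2d}) uses Proposition~\ref{prop::conf_to_P} only for $\alpha\in\Pro_2(\Gr)$, following~\cite{beuckelmann2022small}, and then works purely at the level of posets: it shows that the inclusions $\AD_2\hookrightarrow\AD_2^{\mathrm{ext}}$ and $\Pro_2\hookrightarrow\AD_2^{\mathrm{ext}}$ both satisfy the hypothesis of Quillen's Theorem A. The second of these is the hard step; it runs an induction over the poset in which each overcategory $\iota/g$ is covered by down-sets indexed by sign patterns on the admissible edges of a $\mu$-minimal broken circuit, and the nerve of that cover is a cross polytope with one facet removed, hence contractible. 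Two-out-of-three then gives $|\Pro_2|\simeq|\AD_2|$, and combining with Proposition~\ref{prop::conf_to_P} and Theorem~\ref{thm::implies_En} yields the statement. If you want to keep your more direct route, you would need an honest proof that $\psi_2/\alpha$ is contractible for every $\alpha$ — a statement strictly stronger than what the paper establishes — and a coordinate-shear homotopy does not deliver it.
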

\begin{proof}
	The case of complete graphs $\K_k$ was established in~\cite[Thm.~5.10]{beuckelmann2022small} using induction on $k$, with the induction step relying on the fibration $\Conf_k(\Real^d)\to \Conf_{k-1}(\Real^d)$. In the graphical setting, as discussed in Section~\ref{sec::chordal_cycles}, this fiber argument only works for simplicial vertices, which restricts the generalization to chordal graphs.
	
	The case $d=1$ was described in Section~\ref{sec::polytopes}, where $\Conf_{\Gamma}(\mathbb{R})$ was shown to be a disjoint union of contractible components indexed by acyclic orientations of $\Gamma$.
	
	The remaining case $d=2$ is addressed in detail in Section~\ref{sec:sub:acycl:2d}
	(Corollary~\ref{cor::p-ad}).
\end{proof}

\begin{theorem}\label{thm::implies_En}
	For $d \geq 1$, if $\psi_d\colon \Conf(\mathbb{R}^d)\to \AD_d$ induces a homotopy equivalence of graphical collections $\Conf(\mathbb{R}^d)$ and $|\AD_d|$, then $\AD_d$ is a $E_d$-contractad.    
\end{theorem}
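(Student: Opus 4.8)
The plan is to realise $\AD_d$ as the target of a genuine \emph{contractad} morphism out of the Fulton--MacPherson contractad $\FM_d$, and then transport the hypothesised equivalence of graphical collections $\Conf(\mathbb{R}^d)\simeq|\AD_d|$ up to a zigzag of contractad equivalences, using the poset comparison machinery of \S\ref{sec:sub:Poset}. First I would extend $\psi_d$ to a map $\bar\psi_d\colon\FM_d\to\AD_d$ from the topological contractad to the poset contractad. The key point is that for each edge $e=(v,w)\in E_\Gr$ the compactification carries a projection $\rho_e\colon\FM_d(\Gr)\to\Sph(W_\Gr/H_e)\cong S^{d-1}$, and every unit vector has a well-defined first nonzero coordinate; reading off this index together with its sign assigns to $e$ a $[d]$-weighted orientation, and on the open stratum $\NConf_\Gr(\mathbb{R}^d)$ this recovers $\psi_d$. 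Thus $\bar\psi_d$ is defined on all of $\FM_d(\Gr)$, and I would check that the resulting weighted direction is acyclic (being a limit of the acyclic directions on the open stratum, exactly as in the complete-graph case of~\cite{beuckelmann2022small}) and order-continuous in the sense of \S\ref{sec:sub:Poset}.

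The crucial observation is that $\bar\psi_d$ is a \emph{strict} morphism of contractads. Indeed, the composition $\circ^\Gr_G$ of $\FM_d$ in~\eqref{eq::contractad_struc_real} routes the projection $\rho_e$ of an edge $e$ from the factor $\FM_d(\Gr|_G)$ when $e\subset G$ and from the factor $\FM_d(\Gr/G)$ (through the contracted vertex) otherwise, and this is precisely the rule by which the composition of $\AD_d$ described in \S\ref{sec:sub:Acyclic:contractad} assembles a weighted direction from its constituents. Hence $\bar\psi_d(\alpha\circ^\Gr_G\beta)=\bar\psi_d(\alpha)\circ^\Gr_G\bar\psi_d(\beta)$, so $\bar\psi_d$ respects the contractad structure.

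Next I would run the \S\ref{sec:sub:Poset} construction for $\bar\psi_d$ to obtain a poset contractad $\R$ with $\R(\Gr)=\{(x,\alpha)\mid\bar\psi_d(x)\le\alpha\}$ together with contractad morphisms $\FM_d\xleftarrow{\;\simeq\;}|\R|\to|\AD_d|$, the left projection being an equivalence by the general principle of that subsection. In parallel, the hypothesis that $\psi_d$ induces an equivalence is, by the very definition recalled in \S\ref{sec:sub:Poset}, the assertion that the analogous \emph{graphical-collection} construction $\R_0(\Gr)=\{(x,\alpha)\mid\psi_d(x)\le\alpha\}$ yields an equivalence $|\R_0|\to|\AD_d|$, with left projection $|\R_0|\to\Conf(\mathbb{R}^d)$ an equivalence as well. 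Since $\psi_d=\bar\psi_d|_{\NConf}$, the canonical equivalence $\Conf(\mathbb{R}^d)\xrightarrow{\simeq}\FM_d$ of Theorem~\ref{thm::FMn_is_En} induces a map $|\R_0|\to|\R|$ fitting into a commuting square whose other three edges — the two left projections and the inclusion $\Conf(\mathbb{R}^d)\hookrightarrow\FM_d$ — are all equivalences; hence $|\R_0|\to|\R|$ is an equivalence by two-out-of-three. As this map commutes with the projections to $|\AD_d|$, a second application of two-out-of-three upgrades the hypothesised equivalence $|\R_0|\to|\AD_d|$ to an equivalence $|\R|\to|\AD_d|$. Assembling the zigzag $\D_d\simeq\FM_d\xleftarrow{\;\simeq\;}|\R|\xrightarrow{\;\simeq\;}|\AD_d|$ of contractad equivalences then shows that $|\AD_d|$ is an $E_d$-contractad.

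The hard part will be the first step: verifying that $\bar\psi_d$ genuinely lands in $\AD_d$ — that is, that the edge-wise directions read off from the limiting sphere coordinates never close up into a directed cycle — and that $\bar\psi_d$ is continuous as a map into the poset, so that $\R$ is a well-defined poset contractad whose left projection is an equivalence. Both are the graphical counterparts of the semialgebraic bookkeeping carried out for complete graphs in~\cite{beuckelmann2022small}, and it is precisely the strictness of $\bar\psi_d$ as a contractad morphism that allows the operadic argument to be transcribed verbatim; once these technical points are in place, the remainder of the argument is formal.
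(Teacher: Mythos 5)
Your argument is correct in outline, but it takes a genuinely different route from the paper's. The paper never extends $\psi_d$ to the compactification: it introduces a Boardman--Vogt resolution $\mathsf{W}(\AD_d)$ for poset contractads (replacing $[0,1]$ by the poset $\{0<1\}$) and defines the comparison morphism $\FM_d\to\mathsf{W}(\AD_d)$ stratum by stratum, using that each open stratum $\FM_d((T))\cong\prod_{v\in\Ver(T)}\NConf_{\In(v)}(\mathbb{R}^d)$ is a product of open configuration spaces on which $\psi_d$ is already defined; the hypothesis then yields $\FM_d\simeq|\mathsf{W}(\AD_d)|$, and $|\mathsf{W}(\AD_d)|\simeq|\AD_d|$ finishes the proof. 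You instead build a single strict contractad morphism $\bar\psi_d\colon\FM_d\to\AD_d$ out of the boundary sphere projections $\rho_e$ (your map is exactly the paper's composed with the Boardman--Vogt projection $\mathsf{W}(\AD_d)\to\AD_d$) and then run the comparison-poset construction of \S\ref{sec:sub:Poset} together with two-out-of-three. The paper's route keeps all the analysis on open strata, so no boundary bookkeeping is needed; your route avoids the $\mathsf{W}$-construction, and the two-out-of-three trick cleverly sidesteps having to prove contractibility of the fibres $\bar\psi_d/\alpha$ over the compactification, which would otherwise be the hard part of the direct approach. Two points should be tightened. First, ``acyclic because it is a limit of acyclic directions'' is not an argument: $\AD_d(\Gr)$ is discrete and degeneration of configurations does change the direction data; the correct justification is the one you give in your second paragraph, namely that by the formulas~\eqref{eq::contractad_struc_real} the restriction of $\bar\psi_d$ to a stratum $\FM_d((T))$ equals the $\AD_d$-composite of the $\psi_d$'s of the factors, and $\AD_d$ is closed under composition. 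Second, for the left projection $|\R|\to\FM_d$ to be an equivalence (and for $\R$ to be a well-behaved topological poset) you should record that each set $\{x\in\FM_d(\Gr)\mid\bar\psi_d(x)\le\alpha\}$ is open; this holds because it is a finite intersection of preimages, under the continuous maps $\rho_e$, of open subsets of $S^{d-1}$.
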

\begin{proof}
	The argument follows~\cite[Thm.~9.1]{beuckelmann2022small}. Briefly, one defines the Boardman–Vogt resolution $\mathsf{W}\Pop \to \Pop$ for poset contractads by replacing the interval $[0,1]$ with the poset interval $[1]=\{0<1\}$. Explicitly, $\mathsf{W}\Pop$ is the free contractad $\T(\Pop)$, ordered by contracting edges in rooted trees and comparing $\Pop$-decorated vertices accordingly.
	
	We construct the morphism of contractads
	\[
	\FM_d \to \mathsf{W}(\AD_d), \qquad
	\FM_d((T)) \cong \prod_{v\in \Ver(T)} \NConf_{\In(v)}(\mathbb{R}^d)\overset{\psi}{\longrightarrow} \prod_{v\in \Ver(T)} \AD_d(\In(v)) = \mathsf{W}(\AD_d)((T)).
	\]
	By analogy with the operad case, the equivalence $\psi\colon \Conf(\mathbb{R}^d)\to \AD_d$ implies equivalence of contractads
	\(
	|\mathsf{W}(\AD_d)| \simeq \FM_d.
	\)
	Since $\mathsf{W}(\Pop)\simeq \Pop$ for poset operads, and by Theorem~\ref{thm::FMn_is_En}, we obtain the desired equivalence
	\[
	|\AD_d| \simeq |\mathsf{W}(\AD_d)| \simeq \FM_d \simeq \D_d.
	\]
\end{proof}

\begin{corollary}\label{cor::ADisEn}
	For $d=1,2$, and for all $d$ in the class of chordal graphs, $\AD_d$ is an $E_d$-contractad.
\end{corollary}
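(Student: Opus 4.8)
The plan is to obtain this corollary with essentially no new work, as the formal composite of the two immediately preceding theorems. First I would recall that Conjecture~\ref{conj::ADn_is_En}, as established above, asserts precisely that the comparison map $\psi_d\colon\Conf(\mathbb{R}^d)\to\AD_d$ is a component-wise weak homotopy equivalence $\Conf_{\Gr}(\mathbb{R}^d)\simeq|\AD_d(\Gr)|$ in exactly the three regimes named in the statement: $d=1$ and $d=2$ for arbitrary $\Gr$, and all $d$ when $\Gr$ is restricted to chordal graphs. This is verbatim the hypothesis required to invoke Theorem~\ref{thm::implies_En}.

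Second, I would feed this equivalence into Theorem~\ref{thm::implies_En}. Once $\psi_d$ is known to induce an equivalence of graphical collections, that theorem produces the chain $|\AD_d|\simeq|\mathsf{W}(\AD_d)|\simeq\FM_d\simeq\D_d$, where the middle equivalence comes from the Boardman--Vogt comparison $\FM_d((T))\cong\prod_{v\in\Ver(T)}\NConf_{\In(v)}(\mathbb{R}^d)\to\prod_{v\in\Ver(T)}\AD_d(\In(v))=\mathsf{W}(\AD_d)((T))$, the collapse $\mathsf{W}(\AD_d)\simeq\AD_d$ for poset contractads, and $\FM_d\simeq\D_d$ from Theorem~\ref{thm::FMn_is_En}. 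Hence $\AD_d$ is an $E_d$-contractad. For $d=1$ and $d=2$ this is literally the concatenation of the two theorems over the class of all graphs, with no further input.

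The one point deserving care is the chordal case, where the equivalence $\psi_d$ is available only inside the subclass of chordal graphs, so I would rerun the argument of Theorem~\ref{thm::implies_En} \emph{relative to} that subclass rather than over all graphs. This reduces to checking that every input graph $\In(v)$ occurring in an admissible-tree stratum of a chordal graph is again chordal, so that the resolution $\mathsf{W}(\AD_d)$ and the stratification $\FM_d((T))\cong\prod_v\NConf_{\In(v)}(\mathbb{R}^d)$ never leave the class on which $\psi_d$ is an equivalence. By construction $\In(v)$ is the restriction $\Gr|_L$ of $\Gr$ to the tube $L$ of leaves below $v$, followed by contraction of the tube partition $\{L_{e_i}\}$ of $L$; since induced subgraphs of chordal graphs are chordal, the remaining content is a closure lemma: \emph{contracting a tube in a chordal graph yields a chordal graph} (the general tube-partition case then follows by iteration). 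I expect this lemma to be the only genuinely combinatorial step, and I would prove it by contradiction via the perfect elimination ordering characterization: an induced cycle of length $\ge 4$ in $\Gr/G$ through the contracted vertex $\{G\}$ lifts, using a shortest connecting path inside the connected tube $G$ and the interior cycle vertices (which by hypothesis have no neighbours in $G$), to an induced chordless cycle of length $\ge 4$ in $\Gr$, contradicting chordality; cycles avoiding $\{G\}$ lift directly. With this lemma in hand the corollary follows immediately from the two theorems in all three cases.
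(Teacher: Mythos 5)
Your proposal is correct and coincides with the paper's (essentially unwritten) proof: the corollary is obtained by feeding the theorem establishing Conjecture~\ref{conj::ADn_is_En} in the three stated regimes directly into Theorem~\ref{thm::implies_En}. Your extra step for the chordal case — rerunning Theorem~\ref{thm::implies_En} relative to the subclass and verifying that every input graph $\In(v)=(\Gr|_{L_v})/I$ of an admissible tree stays chordal, via the closure of chordal graphs under induced subgraphs and tube contraction — is a correct and genuinely needed detail that the paper leaves implicit (it silently treats chordal graphs as a class closed under contractions and restrictions throughout \S\ref{sec::chordal_cycles}), and your cycle-lifting argument for the contraction lemma is sound; an alternative one-line proof is via the characterization of chordal graphs as intersection graphs of subtrees of a tree, since the union of the subtrees over a connected tube is again a subtree.
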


\subsection{Two-dimensional case}
\label{sec:sub:acycl:2d}

In this section, we prove Conjecture~\ref{conj::ADn_is_En} for $d=2$.
We work with the extended acyclic directions contractad $\AD_d^{\mathrm{ext}}$, defined in the same way as $\AD_d$, except that the acyclicity condition is relaxed to exclude only directed cycles of uniform weight. This construction is a graphical generalization of the operad introduced in~\cite{brun2007multiplicative}. As in the operadic case, we obtain:

\begin{proposition}\label{prop::ADext}
	The inclusion $j\colon \AD_d \to \AD_d^{\mathrm{ext}}$ is an equivalence of poset contractads.
\end{proposition}

\begin{proof}
	The proof follows the same pattern as in~\cite[Thm.~4.5]{beuckelmann2022small}. By Quillen's Theorem A~\cite{quillen2006higher}, it is enough to show that the geometric realization of the \emph{overposet}
	\[
	j/\alpha := \{x \in \AD_d(\Gr) \colon j(x) \leq \alpha\} = \downarrow \alpha \cap \AD_d(\Gr)
	\]
	is contractible for each $\alpha \in \AD_d^{\mathrm{ext}}(\Gr)$.  
	
	If $\alpha \in \AD_d(\Gr)$, the claim is immediate, and hence it holds for all minimal elements of $\AD_d^{\mathrm{ext}}(\Gr)$. We proceed by induction on the partial order of $\AD_d^{\mathrm{ext}}(\Gr)$.
	
	Suppose $\alpha$ lies in the complement $\AD_d^{\mathrm{ext}}(\Gr)\setminus \AD_d(\Gr)$. For a directed cycle
	\[
	\Cyc = v_1 \overset{\mathsf{w}_1}{\to} v_2 \to \cdots \to v_{k} \overset{\mathsf{w}_k}{\to} v_1
	\]
	in $\alpha$, define
	\[
	\mu(\Cyc) := \bigl(\mu_d(\Cyc), \ldots, \mu_1(\Cyc)\bigr),
	\]
	where $\mu_i(\Cyc)$ is the number of edges of weight $i$ in $\Cyc$.  
	Let $\Cyc_0$ be a minimal cycle in $g$ with respect to $\mu$-lexicographic order.
    Denote by $\Cyc^{>1}_0$ the set of non-unital edges of $\Cyc_0$. For any nonempty $\Susp \subset \Cyc^{>1}_0$, define $\alpha_{\Susp}$ to be the direction obtained from $\alpha$ by replacing each edge $v_i \overset{k}{\to} v_{i+1}$ with $v_i \overset{k-1}{\leftarrow} v_{i+1}$ whenever $i \in \Susp$.

    Note that $\alpha_{\Susp}$ is an uniformly acyclic orientation for any $\Susp\subset \Cyc^{>1}_0$. Assume the contrary, i.e., that $\alpha_{\Susp}$ contains a cycle $\Cyc'$ of uniform weight $k$. Suppose first that $\Cyc_0 \cap \Cyc'$ is a consecutive sequence of edges $\{(v_i,v_{i+1}), (v_{i+1},v_{i+2}), \ldots, (v_j,v_{j+1})\}$. Then $\alpha$ would contain a directed cycle $\Cyc'' = \Cyc_0 \cup \Cyc' \setminus (\Cyc_0 \cap \Cyc')$ with $\mu(\Cyc'') < \mu(\Cyc_0)$, contradicting the $\mu$-minimality of $\Cyc_0$. Alternatively, if $\Cyc_0 \cap \Cyc'$ contains two edges $(v_i,v_{i+1})$ and $(v_j,v_{j+1})$ with no edges of $\Cyc_0$ between them, then combining the corresponding segments of $\Cyc_0$ and $\Cyc'$ produces a directed cycle $\Cyc''$ in $\alpha$ with $\mu(\Cyc'') < \mu(\Cyc_0)$. This again contradicts the minimality of $\Cyc_0$. 
	
	By the induction hypothesis, each $j/\alpha_{\Susp}$ is contractible. Thus, we obtain a covering
	\[
	j/\alpha = \bigcup_{\varnothing \neq \Susp \subset \Cyc^{>1}_0} j/\alpha_{\Susp},
	\]
	where all pieces and their intersections are contractible (and hence nonempty). It follows that $j/\alpha$ itself is contractible.
\end{proof}
The following Proposition is the main part of the proof.
\begin{proposition}
\label{prp::p-ad-ext}	
	The inclusion $\Pro_2 \to \AD_2^{\mathrm{ext}}$ is an equivalence of graphical poset collections.
\end{proposition}
\begin{proof}
	As in the proof of Proposition~\ref{prop::ADext}, we show that the inclusion 
	\[
	\iota\colon \Pro_2(\Gr)\to \AD_2(\Gr)
	\] 
	satisfies the condition of Quillen's Theorem A by induction on the partial order of elements. Note that $\min \AD_2=\AD_1\subset \Pro_2$. If $\alpha \in \Pro_2(\Gr)$, $\iota/\alpha$ is immediately contractible, and hence it holds for all minimal elements of $\AD_2^{\mathrm{ext}}(\Gr)$. Thanks to the proof of Proposition~\ref{prop::ADext}, for an element $\alpha\in \AD^{\mathrm{ext}}_2$, we could find a collection of elements $\alpha_i$ from $\downarrow \alpha$ such that $\iota/\alpha$ admits a cover by $\iota/\alpha_i$ and by the induction assumption all their intersections are non-empty and contractible, hence $\iota/\alpha$ is contractible. So, it remains only to check that $\iota/\alpha$ is contractible for $\alpha$ in the complement $\AD_2(\Gr)\setminus \Pro_2(\Gr)$.
    
    Because $\alpha$ lies in the complement, it contains a \emph{broken circuit}, i.e., a sub-cycle $\Cyc\subset \Gr$ with $\min_{e\in\Cyc^+(\alpha)} w(e)\neq \min_{e\in\Cyc^-(\alpha)} w(e)$ in the notation of page~\pageref{page::description_of_proper}. Let $\Cyc_0$ be a minimal broken circuit with respect to $\mu$-lexicographical order and we choose the internal orientation of $\Cyc_0$ such that $\min_{e\in\Cyc^+(\alpha)} w(e)=2$ and $\min_{e\in\Cyc^-(\alpha)} w(e)=1$. In other words, all edges from $\Cyc^+(\alpha)$ are of weight $2$, and there exists an edge from $\Cyc^-(\alpha)$ of weight $1$.
    
    Let $\Cyc^{=2}_0\subset E_{\Cyc_0}$ be the subset of edges of weight $2$. For a signed set $\Susp\colon \Cyc^{=2}_0 \to \{+,-,0\}$, we define the weighted orientation $\alpha_{\Susp}$ obtained from $\alpha$ as follows: for an edge $e$ from the support $\underline{\Susp}=:\{e\in \Cyc^{=2}_0|\Susp(e)\neq 0\}$, we let it weight to be $1$ in $\alpha_{\Susp}$ and we reverse its orientation if $\Susp(e)=-$. We also consider a signed set $\Susp_{sing}$ which we shall call singular by the rule
    \[
    \Susp_{sing}(e)=\begin{cases}
    +,\text{ if }e\in \Cyc_0^{-}(\alpha)\cap\Cyc^{=2}_0
    \\
    -, \text{ if }e\in \Cyc_0^{+}(\alpha)
    \end{cases}
    \] Note that the set of signed sets with support in $\Cyc_0^{=2}$ except the singular signed set $\Susp_{sing}$ forms a simplicial complex $\mathcal{K}(\Cyc_0)$ on the vertex set $\Cyc_0^{=2}\times \{+,-\}$.
\begin{lemma} For $\Susp\neq \Susp_{sing}$, the direction $\alpha_{\Susp}$ is uniformal acyclic.
\end{lemma}
\begin{proof} Suppose the converse and we have an uniformal directed cycle $\Cyc_1$ in $\alpha_{\Susp}$. In particular, it implies that all edges from $\Cyc_1$ are of weight $1$ in $\alpha_{\Susp}$ and $\underline{\Susp}\cap\Cyc_1\neq \varnothing$. Without loss of generality we can assume $\underline{\Susp}\subset\Cyc_1$. Also, since $\Susp\neq \Susp_{sing}$ it implies that $\Cyc_0\neq  \Cyc_1$. Let us consider the internal orientation of $\Cyc_1$ inverse to that in $\alpha_{\Susp}$, i.e., such that $\Cyc_1^-(\alpha_{\Susp})=E_{\Cyc_1}$. In particular, we have $\Cyc^+_1(\alpha)=\Susp^-=\{e| \Susp(e)=-\}\subset \underline{\Susp}$ and hence $\Susp^-\neq \varnothing$. Since $\alpha$ has no cycles, it implies that some edges in $\alpha_{\Susp}$ are reversed and hence $\Susp^-\neq \varnothing$.

Suppose that the internal orientations of $\Cyc_0$ and $\Cyc_1$ coincide on $\underline{\Susp}$. In particular $\Cyc^+_0(g)\cap \Susp=\Cyc^+_1(\alpha)=\Susp^-$. If $\underline{\Susp}=\Cyc^{=2}_0$, then $\Susp^-=\Cyc^-_0(\alpha)\cap \Cyc^{=2}_0$ and hence $\Susp= \Susp_{singular}$, that contradicts the assumption. So, we have $\underline{\Susp}\neq \Cyc^{=2}_0$ and hence $\mu_2(\Cyc_1)<\mu_2(\Cyc_0)$ in $g$. Note that $\varnothing\neq \Cyc^+_1(\alpha)=\Susp^-\subset \Cyc^2_0$, therefore $\min_{e\in \Cyc^+_1(\alpha)} w(e)=2$. Since $\Cyc_0\neq \Cyc_1$, the complement $\Cyc^{-}_1(\alpha)\setminus \Susp$ is non-empty and all edges from it are of weight $1$ in $\alpha$, we have $\min_{e\in \Cyc^-_1(\alpha)} w(e)=1$. So, $\Cyc_1$ is a broken circuit and $\mu_2(\Cyc_1)<\mu_2(\Cyc_0)$ that contradicts $\mu$-minimality of $\Cyc_0$.

Therefore, there is an edge $e_0\in\underline{\Susp}$ that is oriented in different ways with respect to the internal orientations of $\Cyc_0$ and $\Cyc_1$. So, we could find a new cycle $\Cyc_2$ with $E_{\Cyc_2}\subset E_{\Cyc_1}\cup E_{\Cyc_0}\setminus \{e_0\}$ with the internal orientation such that $\Cyc_2^{\pm}(\alpha)\subset \Cyc^{\pm}_0(\alpha)\cup \Cyc_1^{\pm}(\alpha)$. On the one hand, since $\Cyc_0\neq \Cyc_1$, the complement $ E_{\Cyc_2}\setminus E_{\Cyc_0}\subset$ is non-empty and is contained in $\Cyc_1^-(\alpha)$ ( and hence in $\Cyc_2^-(\alpha)$) and all edges of this complement are of weight $1$, we have $\min_{e\in \Cyc_2^{-}(\alpha)} w(e)=1$. On the other hand, we have $\min_{e\in \Cyc_2^{-}(\alpha)} w(e)=2$. Indeed, if there is $e\in \Cyc^+_2(\alpha)$ with $w(e)=1$, then $e\not\in \Susp$ and therefore $e\not\in \Susp^-=\Cyc_1^+(\alpha)$, so $e\in \Cyc^+_0(\alpha)\subset E_{\Cyc_0}^{=2}$, which contradicts $w(e)=1$. So, $\Cyc_2$ is a broken circuit with respect to $\alpha$ with $\mu_2(\Cyc_2)\leq \mu_2(\Cyc_0)-1$, that contradicts $\mu$-minimality of $\Cyc_0$

\end{proof}
Note that we have a cover of $\iota/\alpha$
\[
\iota/\alpha=\cup_{s\in \Cyc^{=2}_0\times \{+,-\}} \iota/\alpha_{s},
\] where $\alpha_{(e,\pm)}$ is obtained from $g$ be decreasing the weight of $e$ and reversing its orientation depending on a sign. Moreover for $\Susp\subset \Cyc^{=2}_0\times \{+,-\}$, the intersection $\cap_{s\in\Susp} \iota/\alpha_s$ is non-empty iff $\Susp\in \mathcal{K}(\Cyc_0)$ and in this case $\cap_{s\in\Susp} \iota/\alpha_s=\iota/\alpha_{\Susp}$. Since $\alpha_{\Susp}<\alpha$ for all non-empty $\Susp\in \mathcal{K}(\Cyc_0)$, the induction hypothesis implies that each $\iota/\alpha_{\Susp}$ is contractible. Hence $\iota/\alpha$ is covered by contractible subspaces whose non-empty intersections are also contractible, so by the Alexandrov Nerve Theorem, $\iota/\alpha$ has the homotopy type of $\mathcal{K}(\Cyc_0)$.

Note that the union $\mathcal{K}(\Cyc_0)\cup_{\partial\triangle_{sing}} \triangle_{sing}$, where $\triangle_{sing}$ is the simplex corresponding to the singular signed set $\Susp_{sing}$, is homeomorphic to the boundary of the $(k-1)$-dimensional cross polytope, where $k=|\Cyc^{=2}_0|$. Since cross polytopes are convex, $\mathcal{K}(\Cyc_0)$ is homeomorphic to the complement of an open disk in the $(k-1)$-sphere, hence contractible. So, $\iota/\alpha$ is contractible.
\end{proof}

\begin{corollary}
\label{cor::p-ad}	
	The inclusion $\iota\colon \Pro_2 \to \AD_2$ is an equivalence of graphical poset collections. Hence, $|\AD_2|$ is $E_2$-contractad.
\end{corollary}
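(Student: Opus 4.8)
The plan is to deduce the corollary formally from the two preceding propositions via the two-out-of-three property of weak equivalences, and then to feed the result into Theorem~\ref{thm::implies_En}. First I would record that the relevant inclusions of graphical poset collections factor as
\[
\Pro_2 \overset{\iota}{\longrightarrow} \AD_2 \overset{j}{\longrightarrow} \AD_2^{\mathrm{ext}},
\]
where $j$ is the inclusion treated in Proposition~\ref{prop::ADext} and the composite $j\circ\iota$ is exactly the inclusion $\Pro_2\to\AD_2^{\mathrm{ext}}$ treated in Proposition~\ref{prp::p-ad-ext}. Passing to geometric realizations component-wise, for every graph $\Gr$ one obtains a commuting triangle of continuous maps between $|\Pro_2(\Gr)|$, $|\AD_2(\Gr)|$ and $|\AD_2^{\mathrm{ext}}(\Gr)|$.

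Next I would invoke the two cited propositions at the level of realizations: Proposition~\ref{prp::p-ad-ext} gives that $|j\circ\iota|$ is a weak homotopy equivalence for each $\Gr$, and Proposition~\ref{prop::ADext} gives that $|j|$ is a weak homotopy equivalence. Applying the two-out-of-three property to the commuting triangle then forces $|\iota|\colon|\Pro_2(\Gr)|\to|\AD_2(\Gr)|$ to be a weak homotopy equivalence as well. Since this holds for every $\Gr$ and $\iota$ is a morphism of graphical poset collections, $\iota$ is an equivalence of graphical poset collections, which is the first assertion.

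Finally, I would combine this with Proposition~\ref{prop::conf_to_P}, which provides an equivalence $\Conf(\mathbb{R}^2)\simeq|\Pro_2|$ of topological graphical collections induced by $\psi_2$. Composing it with the equivalence $|\iota|\colon|\Pro_2|\simeq|\AD_2|$ just obtained shows that $\psi_2\colon\Conf(\mathbb{R}^2)\to\AD_2$ induces a component-wise homotopy equivalence $\Conf_{\Gr}(\mathbb{R}^2)\simeq|\AD_2(\Gr)|$. This is precisely the hypothesis of Theorem~\ref{thm::implies_En} for $d=2$, so that theorem yields $|\AD_2|\simeq\D_2$, i.e.\ $|\AD_2|$ is an $E_2$-contractad.

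I expect essentially no obstacle in this corollary itself: all the substantive work lies in the preceding results, with Proposition~\ref{prp::p-ad-ext} (the nerve/Quillen-A argument organized around minimal broken circuits) carrying the real difficulty. The only points requiring genuine care are that the two-out-of-three step is applied at the level of geometric realizations rather than posets, and that the chain of equivalences composes as maps of graphical collections, so that the contractad-level hypothesis of Theorem~\ref{thm::implies_En} is actually met componentwise.
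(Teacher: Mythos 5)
Your proposal is correct and follows exactly the paper's own argument: two-out-of-three applied to the factorization $\Pro_2\to\AD_2\to\AD_2^{\mathrm{ext}}$ using Propositions~\ref{prop::ADext} and~\ref{prp::p-ad-ext}, then Proposition~\ref{prop::conf_to_P} and Theorem~\ref{thm::implies_En} to conclude that $|\AD_2|$ is an $E_2$-contractad. No discrepancies.
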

\begin{proof}
	By Proposition~\ref{prop::ADext}, the inclusion $j\colon \AD_2 \to \AD_2^{\mathrm{ext}}$ is a homotopy equivalence. Proposition~\ref{prp::p-ad-ext} further shows that the composition $j\circ \iota\colon \Pro_2 \to \AD_2^{\mathrm{ext}}$ is also a homotopy equivalence. The claim then follows from the $2$-out-of-$3$ property for homotopy equivalences. Thanks to Proposition~\ref{prop::conf_to_P}, the map $\psi_2\colon\Conf(\mathbb{R}^2)\to \AD_2$ induces a homotopy equivalence $\Conf(\mathbb{R}^2)\simeq |\AD_2|$ and hence, by Theorem~\ref{thm::implies_En}, we have an equivalence $|\AD_2|\simeq \D_2$.
\end{proof}

\normalem

\bibliographystyle{alpha}
\bibliography{biblio.bib}
\end{document}